\def\cC{\mathcal{C}}
\def\cD{\mathcal{D}}
\def\fD{\mathfrak{D}}
\def\EE{\mathbb{E}}
\def\PP{\mathbb{P}}
\def\cE{\mathcal{E}}
\def\cF{\mathcal{F}}
\def\cH{\mathcal{H}}
\def\cI{\mathcal{I}}
\def\cJ{\mathcal{J}}
\def\fJ{\mathfrak{J}}
\def\cK{\mathcal{K}}
\def\cN{\mathcal{N}}
\def\RR{\mathbb{R}}
\def\cS{\mathcal{S}}
\def\cS{\mathcal{S}}
\def\fT{\mathfrak{T}}
\def\cZ{\mathcal{Z}}
\def\Om{\Omega}
\newcommand\bP{\mathbf{P}}
\newcommand\HH{\mathfrak H}
\def\al{{\alpha}}
\def\ls{{\lesssim}}
\def\es{{\simeq}}
\newcommand{\ep}{\varepsilon}
\newcommand{\si}{\sigma}
\newcommand{\sgn}{\text{sgn}}
\newcommand{\1}{{\bf 1}}
\newcommand{\blc}{\big(}
\newcommand{\brc}{\big)}
\newcommand{\Blc}{\Big(}
\newcommand{\Brc}{\Big)}
\newcommand{\blk}{\big[}
\newcommand{\brk}{\big]}
\newcommand{\Blk}{\Big[}
\newcommand{\Brk}{\Big]}
\newcommand{\lc}{\left(}
\newcommand{\rc}{\right)}
\newcommand{\lk}{\left[}
\newcommand{\rk}{\right]}
\newcommand{\lt}{\left }
\newcommand{\rt}{\right}
\newtheorem{thm}{Theorem}[section]
\newtheorem{lem}[thm]{Lemma}
\newtheorem{prop}[thm]{Proposition}
\newtheorem{rmk}[thm]{Remark}
\newtheorem{defn}[thm]{Definition}
\theoremstyle{defn}
\theoremstyle{remark}
\numberwithin{equation}{section}
\begin{document}

\title{Nonlinear  stochastic wave Equation
driven by rough noise}

\author{Shuhui Liu}
\address{School of Mathematics, Shandong University, Jinan, Shandong 250100, China}
\email{shuhuiliusdu@gmail.com}
\thanks{SL was supported by the China Scholarship Council.}

\author{Yaozhong Hu}
\address{Department of Mathematical and Statistical Sciences, University of Alberta, Edmonton, AB T6G 2G1, Canada}
\email{yaozhong@ualberta.ca}
\thanks{YH was supported by the NSERC discovery fund and a startup fund of University of Alberta.}

\author{Xiong Wang*}
\address{Department of Mathematical and Statistical Sciences, University of Alberta, Edmonton, AB T6G 2G1, Canada}
\email{xiongwang@ualberta.ca}
\thanks{*Corresponding author: xiongwang@ualberta.ca}


\date{}

\subjclass[2010]{Primary 60H15; secondary 60H05, 60H07, 65M80}
\keywords{Fractional derivative,  rough fractional  noise, stochastic wave equation, decomposition of wave kernel, sup $L_p$-norm,   strong solutions,  well-posedness, H\"older continuity.
}

\begin{abstract}
In this paper, we obtain  the existence and uniqueness of the strong solution to one spatial dimension  stochastic wave equation
$\frac{\partial^2 u(t,x)}{\partial t^2}=\frac{\partial^2 u(t,x)}{\partial x^2}+\sigma(t,x,u(t,x))\dot{W}(t,x)$ assuming $\sigma(t,x,0)=0$, where $\dot W$ is a mean zero Gaussian noise which is white in time and fractional in space with Hurst parameter $H\in(1/4, 1/2)$. 
\end{abstract}

\maketitle
{\hypersetup{hidelinks}
\tableofcontents}
\section{Introduction}
In this paper, we consider the following one (spatial) dimensional stochastic nonlinear wave  equation (SWE for short) driven by  rough spatial  Gaussian noise which is white in time and fractional in space:
\begin{equation}\label{eq.SWE}
\begin{cases}
  \frac{\partial^2 u(t,x)}{\partial t^2}=\frac{\partial^2 u(t,x)}{\partial x^2}+\sigma(t,x,u(t,x))\dot{W}(t,x),\quad   t\in[0,T],\quad
   x\in\RR\,, \\
  u(0,x)=u_0(x)\,,\quad\frac{\partial}{\partial t}u(0,x)=v_0(x)\,.
\end{cases}
\end{equation}
Here    $W(t,x)$ is a centered Gaussian process with covariance given by
\begin{equation}\label{CovW}
  \EE[W(t,x)W(s,y)]=\frac 12 (s\wedge t) \blc |x|^{2H}+|y|^{2H}-|x-y|^{2H} \brc
\end{equation}
  and   $\dot{W}(t,x)=\frac{\partial^2  }{\partial t\partial x}W(t,x)$.
   The main feature of this work is our assumption that the Hurst parameter   $H\in (\frac 14, \frac 12)$.  Namely, the noise is   rough  and fractional  in space variable  and white in time variable.
When    the noise is general Gaussian which is  white in time and satisfies  the so-called Dalang's condition,
 there are some results about the  well-posedness of the equation and   the properties of the solutions (e.g. \cite{Dalang2009,  sanz2009, HHN2014}).
If we apply Dalang's condition to  fractional Gaussian noise, then we need to assume the spatial Hurst  parameter $H\ge 1/2$.      When
$H< 1/2$, namely, when  the noise is rough in space (in this case the spatial dimension must be one dimensional), there are very limited results. The only result we know, to the best of our knowledge, is  the work     \cite{BJQ2015}, where   the noise coefficient   $\sigma(t,x,u)=au+b$ is affine.   There has been no work to tackle  the case when $\si(t,x,u)$ is nonlinear  (or not affine)  function of $u$.   On the other hand,
when $\frac{\partial ^2}{\partial  t^2}$ on the left hand of \eqref{eq.SWE} is replaced by $\frac{\partial }{\partial t }$, this is, in the case nonlinear stochastic heat equations (SHE for short) driven by   spatial rough noise,  the authors   of  \cite{HHLNT2017} studied   the equation in  the case $\sigma(t,x, 0)=0$. They prove the strong existence and uniqueness of solution.   This condition
 $\sigma(t,x,0)=0$  is  removed     in \cite{HW2019},
 where the authors obtained the   existence of  weak
 solution.

Our objective in this paper is to obtain  the strong existence and uniqueness of the SWE \eqref{eq.SWE} while we still assume     $\sigma(t,x,0)=0$.  The  reason  we extend the work of \cite{HHLNT2017}     under this condition is that  one can obtain the existence and uniqueness of strong solution (or mild solution) in a   solution space which is much simpler to deal with.   It seems too much involved to remove the restriction $\sigma(t,x,0)=0$
since in this case we believe that we need to introduce a weighted space for the solution and to study the interaction between the wave Green's kernel and the weight.

Even in  the case  $\sigma(t,x,0)=0$ there are   mainly two  difficulties  to study \eqref{eq.SWE} or its SHE analogue. The first one is that one cannot  bound the $L_p$ norm of $\int_0^t \int_{\RR}  h_t(s,y) W(ds,dy)$ by the $L_p$  norm of $h_t(s,y)$, instead,  one has to use the $L_p$ norm of $h_t(s,y)$ itself plus the $L_p$ norm of  its fractional derivative,  where
$h_t(s,y)=G_{t-s}(x,y)  \sigma(s,y, u(s,y))$ and  $G_ t (x,y)$ is the heat or wave kernel.
This makes thing very much sophisticated.
In particular, as indicated in
\cite{HHLNT2017, HW2019},   due to the existence of our rough noise $\dot{W}$ we need to bound $|\sigma(u_1)-\sigma(u_2)-\sigma(v_1)+\sigma(v_2)|$ by  a multiple of
 $ |u_1 - u_2 - v_1+v_2 |$ (which is possible only in the   affine case).   To get around this difficulty
the authors in \cite{HHLNT2017, HW2019} use a priori  bound of $L_p\times L_\infty$ norm
$\EE  \sup_{0\le t\le T}
|u(t,x)|_{L_p(\RR)}^p$ and the similar norm of the fractional derivative of $u(t,x)$ for the solution
$u(t,x)$.  This immediately poses a new challenge  which is our second difficulty since $\int_0^t \int_{\RR}  h_t(s,y) W(ds,dy)$ is not a   martingale in $t$
(nor it is a semimartingale), it is hard to
bound the $L_p$ norm of  $\sup_{0\le t\le T}\int_0^t \int_{\RR}  h_t(s,y) W(ds,dy)$ since we can no longer use the powerful Burkholder-Davis-Gundy inequality.  In the case of SHE, this is overcame by a clever exploitation of the semigroup property of the heat kernel.  This idea is not reproducible   in SWE simply because    the wave kernel $G_t(x,y)$ does not have the semigroup property, unfortunately!
To surmount this barrier we shall decompose the simple wave kernel $G_t(x-y)$ to four complicated parts so that we can bound the $L_p$ norm of
$\sup_{0\le t\le T}\int_0^t \int_{\RR}  h_t(s,y) W(ds,dy)$  by the $L_p$ norm of $h_t(s,y)$ itself plus the $L_p$ norm of  its fractional derivative.
Of course, one also needs to bound
  $L_p$ norm of the
$\sup_{0\le t\le T}$ norm  of the fractional derivative of $\int_0^t \int_{\RR}  h_t(s,y) W(ds,dy)$.
This will be the main effort of this work. After achieving this estimation, the proof of the existence and uniqueness of the mild solution is routine.

In the study of fractional noise, the number $1/4$ seems to be a magic number. It appears in a number of occurrences. Here we are interested in the problem if $H>1/4$ is necessary for  \eqref{eq.SWE} to have a classical ($L_2$) solution. We shall provide an  affirmative answer. To this end we consider the hyperbolic Anderson model, namely,
  $\sigma(t,x,u )=u $.  In this case  the equation  \eqref{eq.SWE} becomes
\begin{equation}\label{eq.HAE}
\begin{cases}
  \frac{\partial^2 v(t,x)}{\partial t^2}=\frac{\partial^2 v(t,x)}{\partial x^2}+v(t,x)\dot{W}(t,x),\quad   t\in[0,T],\quad
   x\in\RR\,, \\
  v(0,x)=u_0(x)\,,\quad\frac{\partial}{\partial t}v(0,x)=v_0(x)\,.
\end{cases}
\end{equation}
Under some conditions on the initial data,
we shall prove that $v(t,x)$ is square integrable
only if $H>1/4$.  After the completion of this work, we discover that the necessity of $H>1/4$ is implied in  \cite[Proposition 3.4]{SSX2019}.   To make the paper more comprehensive, we keep our alternative proof of the necessity of $H>1/4$. Our method may be useful to study the properties of   \eqref{eq.SWE} with additive noise ($\sigma\equiv 1$). Let us also mention  a recent work \cite{CH2021}  that for the parabolic Anderson model when the dimension $d=1$ and when the noise is white in time and fractional in space with Hurst parameter $H$, then $H>1/4$ is also the necessary and sufficient condition for the solution to be  square integrable.

Here is the organization of this paper.  In Section
\ref{s.2} we   briefly recall  some necessary
concept about stochastic integral and wave kernel and
so on to fix the notations  used in the paper and
we also state our main results obtained in this work.
Sections \ref{s.3}  and \ref{s.4} are the core of the paper.  In Section \ref{s.3}  we decompose the wave kernel into four parts and then we use this decomposition to obtain  the necessary   bound   of the stochastic integral  (stochastic convolution with the  wave kernel).  There are a lot of computations to obtain the bound for the stochastic convolution. We postpone some of these computations in the Appendix \ref{s.6} and \ref{Lemma for 3.3}.
Section \ref{s.4} obtains the existence and uniqueness  of the strong solution. Some of the computations are moved to Appendix \ref{Appen.C} for  the fluency of the proof. Section \ref{s.5} is about the necessity of $H>1/4$ for strong solution to exist.

 Throughout  the paper,   $A \lesssim B$ (and $A \gtrsim B$) means that there are  universal constants $C_1, C_2\in (0, \infty)$  such that $A\le C_1B$ (and $A\ge C_2B$).
We also denote throughout the paper
\begin{align}
	\Delta_\tau f(t,x):=&\ f(t+\tau ,x)-f(t,x)\,,\label{time_dif} \\
	\fD_h f(t,x):=&\ f(t,x+h)-f(t,x)\,,\label{space_dif}
\end{align}
and
\begin{align}
	\Box_{h,l} f(t,x):=&\ \fD_l \fD_h f(t,x)=\fD_h f(t,x+l)- \fD_h f(t,x)\nonumber  \\
	=&\ [f(t,x+h+l)-f(t,x+l)]-[f(t,x+h)-f(t,x)]\,.\label{box_diff}
\end{align}

\section{Preliminaries and Main results}\label{s.2}
Let $(\Om,\cF, \PP)$ be a complete probability space and let $W=(W(t,x), t\ge 0\,, x\in \RR)$ be a mean zero Gaussian random field  whose covariance is given by \eqref{CovW}. For any $t\ge 0$,  $\cF_t=\si(W(s,x)\,,  s\in [0,  t], x\in \RR)$ be the $\si$-algebra generated by the Gaussian field $W$. We recall briefly some notations and  facts in  \cite{HHLNT2017} and refer to
that reference for more details.

Denote $\cS$ the set of smooth functions on $\RR_+\times \RR$ with compact support. For any $f,g\in \cS$, define
\begin{equation}
\langle f, g\rangle_\HH=c_{H}^2 \int_{\RR_{+}\times \RR^2} [f(t,x+y)-f(t,x)][g(t,x+y)-g(t,x)] |y|^{2H-2} dxdydt\,, \label{eq.def_H}
\end{equation}
where
\[
 c_{H}^2  = H(\frac12 -H) \lk\Gamma\Blc H+\frac 12\Brc\rk^{-2}\left(\int_{0}^{\infty} \Blk(1+t)^{H-\frac 12}-t^{H-\frac 12}\Brk^2 dt+\frac{1}{2H}\right) \,.
\]
Let $\HH$ be the Hilbert space obtained by completing
$\cS$ with respect to the scalar product
$\langle \cdot, \cdot \rangle_\HH$.
Let us start with the stochastic integration of elementary process with respect to $W$, and then extend it to general process.
\begin{defn}\label{Ele_def}A random field $f=(f(t,x),(t,x)\in \RR_+\times \RR)$ is called adapted to the filtration $\cF_t$ if $f(t,x)\in \cF_t$ for all $(t,x)\in \RR_+\times \RR$.
 An elementary process $g$ is $\cF_t$-adapted random field     of  the following form:
\[
g(t,x)=\sum_{i=1}^n\sum_{j=1}^mX_{i,j}\1_{(a_i,b_i]}(t)\1_{(c_j,d_j]}(x)\,,
\]
where $n$ and $m$ are  positive integers, $0\le a_1< b_1 < \cdots<a_n< b_n<+\infty$, $c_j<d_j$ and $X_{i,j}$ are $\mathcal{F}_{a_i}$-measurable random variables for $i=1,\cdots,n, j=1, \cdots, m$. The  stochastic integral of such an elementary process $g$ with respect to $W$ is defined as
\begin{align}\label{elem_def}
\int_{\mathbb{R}_+\times \mathbb{R}}&g(t,x)W(dt,dx)=\sum_{i=1}^n\sum_{j=1}^mX_{i,j}W\lt(\1_{(a_i,b_i]}\otimes\1_{(c_j,d_j]}\rt)\nonumber\\
&=\sum_{i=1}^n\sum_{j=1}^mX_{i,j}\lt[W(b_i,d_j)-W(a_i,d_j)-W(b_i,c_j)+W(a_i,c_j) \rt].
\end{align}
\end{defn}
In fact, we have the following proposition (e.g. \cite{HHLNT2017}).
\begin{prop}\label{Int_Gen}
Let $\Lambda_{H}$ be the space of adapted random field  $g$ defined on $\mathbb{R}_+\times\mathbb{R}$ such that $g\in\HH$ a.s.  and $\mathbb{E}[\|g\|_{\HH}^2]<\infty$.   Then we have the following statements.
\begin{enumerate}
\item The space of elementary process defined in Definition \ref{Ele_def} is dense in $\Lambda_{H}$;
\item For $g\in\Lambda_{H}$, the stochastic integral $\int_{\mathbb{R}_+\times\mathbb{R}}g(t,x)W(dt,dx)$ is defined as the $L^2(\Omega)$-limit of stochastic integrals of elementary processes approximating $g(t,x)$ in $\Lambda_{H}$, and  for this stochastic integral we have the following isometry equality
    $$\mathbb{E}\lt[\lt(\int_{\mathbb{R}_+\times\mathbb{R}}g(t,x)W(dt,dx)\rt)^2 \rt]=\mathbb{E}[\|g\|_{\HH}^2].$$
\end{enumerate}
\end{prop}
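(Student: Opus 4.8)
The plan is to prove the two assertions in the order: isometry for elementary integrands first, then the density, then the completion argument; the density is where the real work lies.

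\emph{Isometry on elementary processes.} Fix an elementary $g=\sum_{i,j}X_{i,j}\1_{(a_i,b_i]}(t)\1_{(c_j,d_j]}(x)$ and write $\Delta_{i,j}W=W(b_i,d_j)-W(a_i,d_j)-W(b_i,c_j)+W(a_i,c_j)$ for the rectangular increment appearing in \eqref{elem_def}. Expanding $\bigl(\int gW\bigr)^2=\sum_{i,j,i',j'}X_{i,j}X_{i',j'}\,\Delta_{i,j}W\,\Delta_{i',j'}W$ and taking expectations, the terms with $i\ne i'$, say $b_i\le a_{i'}$, vanish: $X_{i,j}X_{i',j'}\Delta_{i,j}W$ is $\cF_{a_{i'}}$-measurable while $\Delta_{i',j'}W$ is a centered increment of $W$ over $(a_{i'},b_{i'}]$, hence independent of $\cF_{a_{i'}}$, so conditioning on $\cF_{a_{i'}}$ kills the cross term. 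For $i=i'$, independence of $X_{i,j}X_{i,j'}$ from the time increments gives $\EE[X_{i,j}X_{i,j'}\Delta_{i,j}W\Delta_{i,j'}W]=\EE[X_{i,j}X_{i,j'}]\,(b_i-a_i)\,\rho(c_j,d_j;c_{j'},d_{j'})$, where, by \eqref{CovW}, $\rho(c,d;c',d')=\frac12\bigl(|d-c'|^{2H}+|c-d'|^{2H}-|d-d'|^{2H}-|c-c'|^{2H}\bigr)$. On the other side, since the time indicators of distinct blocks are disjointly supported, $\|g\|_\HH^2=\sum_{i,j,j'}X_{i,j}X_{i,j'}(b_i-a_i)\,c_H^2\int_{\RR^2}\fD_y\1_{(c_j,d_j]}(x)\,\fD_y\1_{(c_{j'},d_{j'}]}(x)\,|y|^{2H-2}\,dxdy$ in the notation \eqref{space_dif}. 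The identity then reduces to the classical fact, valid for $H\in(0,1/2)$, that $c_H^2\int_{\RR^2}\fD_y\1_{(c,d]}(x)\,\fD_y\1_{(c',d']}(x)\,|y|^{2H-2}\,dxdy=\rho(c,d;c',d')$ (equivalently $\|\1_{[0,t]}\|_\HH^2=t^{2H}$), which is precisely how $c_H^2$ is normalized; taking $\EE$ of both sides gives $\EE[(\int gW)^2]=\EE\|g\|_\HH^2$.

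\emph{Density of elementary processes in $\Lambda_H$ --- the main point.} Given $g\in\Lambda_H$, I would approximate in stages, at each step controlling the distance in $\EE[\|\cdot\|_\HH^2]$. (i) Reduce to bounded integrands: the pointwise truncation at level $N$ is a contraction, so $|\fD_y g_N|\le|\fD_y g|$, and dominated convergence inside the $dx\,dy\,dt$ integral (with $\|g_N-g\|_\HH^2\le 4\|g\|_\HH^2\in L^1(\Omega)$) gives $\EE\|g_N-g\|_\HH^2\to0$; a smooth spatial window handles localization. (ii) Mollify in the time variable with a one-sided (backward) kernel, which preserves adaptedness and, since the noise is white in time, renders $t\mapsto g_\ep(t,\cdot)$ continuous into the spatial factor of $\HH$ while converging in $\EE[\|\cdot\|_\HH^2]$. (iii) Replace $g_\ep$ by its left-endpoint Riemann sum $\sum_k g_\ep(t_k,\cdot)\1_{(t_k,t_{k+1}]}(t)$ over a fine partition of $[0,T]$; still adapted, and convergent by uniform continuity (the boundedness from (i) supplying the dominating function). (iv) For each fixed $t_k$, approximate $g_\ep(t_k,\cdot)$ by finite linear combinations of indicators $\1_{(c,d]}$, i.e.\ prove that step functions are dense in the spatial Hilbert space; this follows from the Fourier description of that norm as a constant times $\int_\RR|\xi|^{1-2H}|\widehat f(\xi)|^2\,d\xi$ together with a routine frequency-side approximation, and is the analytic heart of the density claim. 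Composing the four approximations and passing to a diagonal sequence produces elementary processes converging to $g$ in $\Lambda_H$.

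\emph{Definition of the integral and passage to the limit.} With density in hand, for $g\in\Lambda_H$ pick elementary $g_n$ with $\EE\|g-g_n\|_\HH^2\to0$. After passing to a common refinement of the partitions, $g_n-g_m$ is again elementary, so the isometry above gives $\EE[(\int(g_n-g_m)W)^2]=\EE\|g_n-g_m\|_\HH^2\to0$; hence $\bigl(\int g_nW\bigr)_n$ is Cauchy in $L^2(\Omega)$ and one sets $\int gW$ to be its limit, which is independent of the approximating sequence by interlacing two such sequences. Continuity of the $L^2(\Omega)$- and $\HH$-norms upgrades the isometry to all of $\Lambda_H$. I expect the genuine obstacle to be the density step --- and within it the spatial density of step functions in the $|\xi|^{1-2H}$-weighted $L^2$ space, plus the bookkeeping needed to keep every intermediate approximant adapted; everything else is the standard $L^2$-completion machinery once the elementary-process isometry is available.
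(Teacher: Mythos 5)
Your proposal is correct and follows the standard route: the paper itself does not prove this proposition but simply cites \cite{HHLNT2017}, and your argument (isometry for elementary processes via the temporal independence of increments plus the normalization identity $c_H^2\int_{\RR^2}\fD_y\1_{(c,d]}(x)\,\fD_y\1_{(c',d']}(x)\,|y|^{2H-2}\,dx\,dy=\rho(c,d;c',d')$, followed by the four-stage adapted approximation and the $L^2(\Omega)$-completion) is precisely the argument carried out there. The only points you leave as "classical" --- the normalization of $c_H$ and the density of step functions in the $|\xi|^{1-2H}$-weighted spectral space --- are indeed the known Pipiras--Taqqu-type facts, so no genuine gap remains.
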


Now we introduce some    norms and spaces  used in this paper. Let $(B,\| \cdot \|_B)$ be a Banach space  with the norm $\| \cdot \|_B$.  Let   $\beta\in(0,1)$ be a fixed number.
 For  any  function $f:\RR\rightarrow B$  denote
 \begin{equation}\label{NBNorm}
 \begin{split}
 	\cN_{\beta}^{B}f(x):=&\lt(\int_{\RR}\|\fD_h f(x)\|_B^2|h|^{-1-2\beta}dh\rt)^{\frac 12}\,,
 \end{split}
 \end{equation}
 if the above quantity is finite, where we recall $\fD_h f(x)=f(x+h)-f(x)$.
 When $B=\RR$, we abbreviate the notation $\cN_{\beta}^{\RR}f$  as  $\cN_{\beta}f$. With this notation, the norm of the homogeneous Sobolev space $\dot{H}^{\beta}$ can be given by using $\cN_\beta f$:    $\|f\|_{\dot \cH_\beta}  = \|\cN_{\beta}f\|_{L^2(\RR)}$.  As in \cite{HHLNT2017} throughout  this paper  we are particularly interested in the case $B=L^p(\Omega)$, and in this case we  denote $ \cN_{\beta}^{B}$ by $\cN_{\beta,p}$:
 \begin{equation}\label{NpNorm}
   \cN_{\beta,p}f(x):=\lt(\int_{\RR}\|\fD_h f(x)\|^2_{L^p(\Omega)} |h|^{-1-2\beta}dh\rt)^{\frac 12}.
 \end{equation}
We shall set $\beta=\frac12-H$. The following    Burkholder-Davis-Gundy  inequality is well-known (see  e.g.  \cite{HHLNT2017,HW2019}).
\begin{prop}\label{Prop.HBDG}
   Let $W$ be the Gaussian noise defined by the covariance \eqref{CovW}, and let    $f\in\Lambda_H$  be a predictable random field. Then for any $p\geq2$ we have
   \begin{equation}\label{e.bdg}
     \begin{split}
        \bigg\|\sup_{0\le r\le t} \int_{0}^{r}\int_{\RR}& f(s,y) W(ds,dy)\bigg\|_{L^p(\Omega)} \\
     \leq & C_{ H}\sqrt{ p}\lt(\int_{0}^{t}\int_{\RR}\lk\cN_{\frac 12-H,p}f(s,y)\rk^2dyds\rt)^{\frac 12} \\
     =& C_{ H}\sqrt{ p}\lt(\int_{0}^{t}\int_{\RR}\int_{\RR}\|\fD_h f(s,y)\|^2_{L^p(\Omega)} |h|^{2H-2}dh dyds\rt)^{\frac 12}\,,
     \end{split}
\end{equation}
where $C_{H}$ is a constant depending only on $H$, $\cN_{\frac 12-H,p}f(s,y)$ denotes the application of $\cN_{\frac 12-H,p} $  to the space variable $y$ and $\fD_h$ is defined by  \eqref{space_dif}.
\end{prop}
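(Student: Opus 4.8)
The plan is to deduce \eqref{e.bdg} from the classical Burkholder--Davis--Gundy inequality for real-valued continuous martingales, exploiting crucially that $W$ is white in the time variable. Write $\HH_0$ for the spatial factor of $\HH$, so that $\|g\|_\HH^2=\int_{\RR_+}\|g(s,\cdot)\|_{\HH_0}^2\,ds$ with $\|\phi\|_{\HH_0}^2=c_H^2\int_\RR\int_\RR|\fD_h\phi(y)|^2\,|h|^{2H-2}\,dh\,dy$, and for a predictable $f\in\Lambda_H$ set $M_r:=\int_0^r\int_\RR f(s,y)\,W(ds,dy)$ for $0\le r\le t$. The first step is to observe that $(M_r)_{0\le r\le t}$ is a real-valued continuous $\cF_r$-martingale with quadratic variation
\[
\langle M\rangle_t=\int_0^t\|f(s,\cdot)\|_{\HH_0}^2\,ds=c_H^2\int_0^t\int_\RR\int_\RR|\fD_h f(s,y)|^2\,|h|^{2H-2}\,dh\,dy\,ds.
\]
This is because the increments of $W$ over disjoint time intervals are independent while $f$ is predictable, so that the conditional form of the isometry in Proposition \ref{Int_Gen} yields $\EE[M_{r_2}-M_{r_1}\mid\cF_{r_1}]=0$ and $\EE[(M_{r_2}-M_{r_1})^2\mid\cF_{r_1}]=\EE[\int_{r_1}^{r_2}\|f(s,\cdot)\|_{\HH_0}^2\,ds\mid\cF_{r_1}]$; one checks these identities first for elementary processes using \eqref{elem_def} and \eqref{CovW} and then extends them by the density assertion of Proposition \ref{Int_Gen}. (Equivalently, one may regard $W$ as a cylindrical Brownian motion on $\HH_0$, in which case $M$ is an ordinary It\^o integral and the formula for $\langle M\rangle$ is immediate.)

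Next I would apply the Burkholder--Davis--Gundy inequality to the continuous martingale $M$. The only point requiring care is the dependence of the constant on $p$: it is classical (see e.g.\ \cite{HHLNT2017,HW2019} and the references therein) that for continuous martingales the optimal $L^p$ constant is of order $\sqrt p$ as $p\to\infty$, and hence, for $p\ge2$,
\[
\Bigl\|\sup_{0\le r\le t}M_r\Bigr\|_{L^p(\Omega)}\le\Bigl\|\sup_{0\le r\le t}|M_r|\Bigr\|_{L^p(\Omega)}\le c\sqrt p\,\bigl\|\langle M\rangle_t^{1/2}\bigr\|_{L^p(\Omega)}=c\sqrt p\,\|\langle M\rangle_t\|_{L^{p/2}(\Omega)}^{1/2}.
\]

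Finally, since $p/2\ge1$, the triangle inequality for $\|\cdot\|_{L^{p/2}(\Omega)}$ applied to the $(s,y,h)$-integral defining $\langle M\rangle_t$, together with $\bigl\||\fD_h f(s,y)|^2\bigr\|_{L^{p/2}(\Omega)}=\|\fD_h f(s,y)\|_{L^p(\Omega)}^2$, gives
\[
\|\langle M\rangle_t\|_{L^{p/2}(\Omega)}\le c_H^2\int_0^t\int_\RR\int_\RR\|\fD_h f(s,y)\|_{L^p(\Omega)}^2\,|h|^{2H-2}\,dh\,dy\,ds.
\]
Combining this with the previous display proves \eqref{e.bdg} with $C_H$ a fixed multiple of $c_H$; and since $[\cN_{\frac12-H,p}f(s,y)]^2=\int_\RR\|\fD_h f(s,y)\|_{L^p(\Omega)}^2|h|^{2H-2}\,dh$, the first line of \eqref{e.bdg} follows as well.

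The only genuinely technical step is the first one --- establishing the martingale property and the explicit quadratic-variation formula for an arbitrary predictable $f\in\Lambda_H$, since $W(s,\cdot)$ is merely a random distribution in the space variable. I expect this to be handled cleanly by the elementary-process approximation of Proposition \ref{Int_Gen}, provided one is careful that the $L^2(\Omega)$-convergence there is strong enough to transfer both path continuity in $r$ and the conditional identities above (or, alternatively, by constructing $M$ from the outset as an It\^o integral against a cylindrical Brownian motion on $\HH_0$). Granting that, the proposition reduces to a routine combination of the scalar BDG inequality with its $\sqrt p$ constant and Minkowski's integral inequality.
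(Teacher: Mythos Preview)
The paper does not supply its own proof of this proposition: it is stated as ``well-known'' with a citation to \cite{HHLNT2017,HW2019}. Your outline is correct and is exactly the standard argument found in those references --- view $M_r$ as a continuous martingale via the white-in-time structure, apply the scalar BDG inequality with the $\sqrt{p}$ constant, and then use Minkowski's inequality in $L^{p/2}(\Omega)$ to pull the norm inside the $(s,y,h)$-integral.
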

%

We introduce the solution space $\cZ^p(T)$. It consists of all continuous functions $f$ from $[0,T]\times\RR$ to $L^p(\Omega)$ such the following norm  is  finite:
\begin{align}\label{def.ZNorm}
  \|f\|_{\cZ^p(T)}=\,&\|f\|_{\cZ^p_{1}(T)} +\|f\|_{\cZ^p_{2}(T)}\\
    :=\,&\sup_{t\in[0,T]} \big\| f(t,\cdot)\big\|_{L^p(\Omega\times \RR)} +\sup_{t\in[0,T]}  \cN^{*}_{\frac 12-H,p}f(t) \,,\nonumber
\end{align}
where $\big\| f(t,\cdot)\big\|_{L^p(\Omega\times \RR)}=\lc\int_{\RR}\EE[|f(t,x)|^p]dx \rc^{1/p}$ and
\begin{equation*}
	\cN^{*}_{\frac 12-H,p}f(t):=\lc\int_{\RR} \big\| \fD_h f(t,\cdot)\big\|^2_{L^p(\Omega\times \RR)} |h|^{2H-2}dh \rc^{\frac 12}\,.
\end{equation*}
It is proved that  $\cZ^p(T)$ is a Banach space
(e.g. \cite[Section 4.1]{HHLNT2017}).

After defining the stochastic integral, let us return to the stochastic wave equation.  Since we are working in dimension $d=1$,   the Green's function associated with
\eqref{eq.SWE} is
 \begin{equation}\label{WaveKerG}
  G_t(x)=\frac 12 \1_{\{|x|<t\}}\,,\quad  t\in \RR_+\,, x\in \RR  \,.
 \end{equation}
Notice that $G_t(x)$ does not satisfy semigroup property.

Now we give    the  definitions of strong and weak solutions
to \eqref{eq.SWE}.

\begin{defn}\label{Def.Sol}
 Let $\left\{u(t,x)\,, t\ge 0\,, x\in \RR\right\}$ be a real-valued adapted  random field  such that for all fixed $t\in[0,T]$ and $x\in \RR$, \  the random field  \[
 \{G_{t-s}(x-y)\sigma(u(s,y))\1_{[0,t]}(s)\,, (s,y)\in   \RR_+\times \RR\}
 \]
  is integrable with respect to $W$(namely it is in $\Lambda_H$).
\begin{enumerate}
\item[(i)]\     We say that $u(t,x)$ is a  \emph{strong  (mild, random field)   solution}  to \eqref{eq.SWE} if for all $t\in[0,T]$ and $x\in\RR$ we have almost surely
   \begin{align}
     u(t,x)&=\frac{\partial}{\partial t} G_t\ast u_0(x)+G_t\ast v_0(x)+G_t\circledast \sigma(\cdot,\cdot,u)(x) \nonumber\\
     &=I_0(t,x) +\int_{0}^{t}\int_{\RR}G_{t-s}(x-y)\sigma(s,y,u(s,y))W(ds,dy)\,,\label{eq.MildSol}
    \end{align}
 where
 \begin{align}\label{def.I_0}
		I_0(t,x):= &G_t\ast v_0(x)
		+\frac{\partial}{\partial t} G_t\ast u_0(x) \nonumber \\
		=&\frac 12\int_{x-t}^{x+t}v_0(y)dy+\frac{1}{2}[u_0(x+t)+u_0(x-t)]\,.
 \end{align}
\item[(ii)]\      We say   \eqref{eq.SWE}   has a \emph{weak solution}   if there exists a probability space with a filtration $(\widetilde{\Omega},\widetilde{\cF},\widetilde{\bP},\widetilde{\cF}_t)$,  an    $  \widetilde{\cF}_t  $-adapted  Gaussian random field $\widetilde{W}$ identical to $W$ in law,  and an  $  \widetilde{\cF}_t  $-adapted  random field\
   $\left\{u(t,x)\,, (t,x)\in\RR_+\times
    \RR\right\}$ on this probability space  $(\widetilde{\Omega},\widetilde{\cF},\widetilde{\bP},\widetilde{\cF}_t)$ such that $u(t,x)   $ is a mild solution with respect to
   $(\widetilde{\Omega},\widetilde{\cF},\widetilde{\bP},\widetilde{\cF}_t)$  and  $\widetilde{W}$.
   \end{enumerate}
\end{defn}

To obtain  the existence and uniqueness  of strong (mild) solution to \eqref{eq.SWE}, we make the following assumptions on $\sigma$.
\begin{enumerate}[start=1,label=\textbf{(H\arabic*)}]
  \item\label{H1}  $\sigma(t,x,u)$ is jointly continuous
  over {  $[0, T]\times \RR^2$}, $\sigma(t,x,0)=0$,   and it is    Lipschitz in $u$  (uniformly in $t$ and $x$).  This means
\  $\forall\  u,v\in\RR$
  \begin{equation}\label{LipSigam}
    \sup\limits_{t\in[0,T],x\in\RR}
    |\sigma(t,x,u)-\sigma(t,x,v)|\leq C |u-v|\,,
  \end{equation}
  for some constant $C >0$.
\end{enumerate}
  One easily  observes that the hypothesis \eqref{LipSigam} and the condition $\si(t,x,0)=0$ imply  that
\begin{equation}\label{LinearG}
	\sup\limits_{t\in[0,T],x\in\RR}|\sigma(t,x,u)|\leq C|u|\,,
\end{equation}
for some constant $C >0$.
\begin{enumerate}[start=2,label=\textbf{(H\arabic*)}]
\item\label{H2} Assume
  $|\frac{\partial}{\partial u} \sigma(t,x,u)|$ and $|\frac{\partial^2}{\partial x \partial u}\sigma(t,x, u)|$ exist and are uniformly bounded, i.e. there is some constant  $C>0$ such that
  \begin{align}
  \sup_{t\in[0,T],x\in\RR,u\in\RR} \lt|\frac{\partial}{\partial u} \sigma(t,x, u)\rt| &\leq C\,; \label{DuSigam}\\
  \sup_{t\in[0,T],x\in\RR,u\in\RR} \lt|\frac{\partial^2}{\partial x \partial u}\sigma(t,x, u)\rt| &\leq C   \,. \label{DxuSigam}
  \end{align}
Moreover, we assume
  \begin{equation}\label{DuSigamAdd}
   \sup_{t\in[0,T],x\in\RR}\left| \frac{\partial}{\partial u} \sigma(t,x,u_1)-\frac{\partial}{\partial u} \sigma(t,x, u_2)\right| \le C  |u_1-u_2| \,.
   \end{equation}
\end{enumerate}
Notice that \eqref{LipSigam} is a consequence of \eqref{DuSigam}. But we keep the former one in the assumption
\ref{H1}  since we shall use  \ref{H1}  for the existence of the weak solution and \ref{H2}  for the existence and uniqueness of the strong solution.

Now we   state the main results of this paper.
\begin{thm}\label{Thm_1}
	Assume that $\sigma(t,x,u)$ satisfies the hypothesis \ref{H1} and that $I_0(t,x)$ is in $\cZ^p(T)$ for some $p>\frac{2}{4H-1}$. Then, there   exists a weak solution  to   \eqref{eq.SWE} whose  sample paths   are in $\cC([0,T]\times\RR)$ almost surely.  Moreover, for any $\gamma< H-\frac{1}{p}$, the process $u(t,x)$ is almost surely H\"older continuous of exponent $\gamma$ with respect to $t$ and $x$ on any compact subsets of  $[0, T]\times \RR$.
	\end{thm}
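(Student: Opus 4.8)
The plan is to construct the weak solution via a Picard iteration combined with a tightness/compactness argument, following the strategy used in \cite{HHLNT2017,HW2019} for the heat equation but adapted to the wave kernel. First I would set up the Picard scheme: let $u^{(0)}(t,x)=I_0(t,x)$ and define inductively
\[
u^{(n+1)}(t,x)=I_0(t,x)+\int_0^t\int_{\RR}G_{t-s}(x-y)\sigma\bigl(s,y,u^{(n)}(s,y)\bigr)W(ds,dy)\,.
\]
The key technical input is the a priori estimate that must already be established (this is exactly the ``main effort'' alluded to in the introduction, obtained via the four-part decomposition of the wave kernel and Proposition~\ref{Prop.HBDG}): for $f\in\cZ^p(T)$ one has a bound of the form
\[
\Bigl\|\,t\mapsto\int_0^t\!\!\int_{\RR}G_{t-s}(x-y)\sigma(s,y,f(s,y))W(ds,dy)\,\Bigr\|_{\cZ^p(T_0)}\le C(T_0)\,\|f\|_{\cZ^p(T_0)}\,,
\]
where $C(T_0)\to 0$ as $T_0\to 0$, using \ref{H1} and \eqref{LinearG} to pass from $\sigma(s,y,f)$ to $f$. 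Iterating this over finitely many subintervals of $[0,T]$ of length $T_0$, I would obtain a uniform bound $\sup_n\|u^{(n)}\|_{\cZ^p(T)}<\infty$, using the hypothesis $I_0\in\cZ^p(T)$.

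Next, since the contraction constant is small on short intervals, under \ref{H1} alone one does \emph{not} immediately get a Cauchy sequence in $\cZ^p(T)$ (the difference $\sigma(u_1)-\sigma(u_2)$ is controlled but the ``box'' increments $\Box_{h,l}$ of $\sigma(u_1)-\sigma(u_2)$ are not, which is precisely why strong uniqueness needs \ref{H2}). So instead I would argue tightness: the uniform $\cZ^p(T)$ bound, together with the H\"older-type estimates on time and space increments of the stochastic convolution — derived from the same decomposition of $G$ and Kolmogorov's continuity criterion, noting that $p>\tfrac{2}{4H-1}$ guarantees enough integrability to get a positive H\"older exponent $\gamma<H-\tfrac1p$ — shows that the laws of $(u^{(n)},W)$ are tight on $\cC([0,T]\times\RR)\times\cC$ (on compacts, with the usual projective-limit topology over a countable exhaustion of $[0,T]\times\RR$). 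Extract a weakly convergent subsequence; by Skorokhod's representation theorem realize the limit $(\widetilde u,\widetilde W)$ on a common probability space with a.s.\ convergence. Then I would pass to the limit in the mild formulation: the linear terms $I_0$ converge trivially, and for the stochastic integral term one identifies the limit using the isometry of Proposition~\ref{Int_Gen} together with the a.s.\ convergence and uniform integrability coming from the $\cZ^p$-bound with $p>2$, checking that $\widetilde u$ is adapted to the filtration generated by $\widetilde W$ and that $\{G_{t-s}(x-y)\sigma(\widetilde u(s,y))\1_{[0,t]}\}\in\Lambda_H$. This yields a weak solution in the sense of Definition~\ref{Def.Sol}(ii).

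Finally, the H\"older regularity: since $\widetilde u=I_0+G\circledast\sigma(\cdot,\cdot,\widetilde u)$ with $\widetilde u\in\cZ^p(T)$ a.s.\ (the $\cZ^p$-bound is inherited in the limit by Fatou), I would feed $\widetilde u$ back into the stochastic convolution and apply the increment estimates: for $\tau$-increments in time and $h$-increments in space of $\int_0^t\int_{\RR}G_{t-s}(x-y)\sigma(s,y,\widetilde u(s,y))W(ds,dy)$, the decomposition of the wave kernel plus Proposition~\ref{Prop.HBDG} should give $L^p(\Omega)$ bounds of order $|\tau|^{H}$ and $|h|^{H}$ (up to logarithmic losses), and $I_0$ in \eqref{def.I_0} is manifestly as regular as $u_0,v_0$ allow; then Kolmogorov's criterion upgrades this to a.s.\ H\"older continuity of any exponent $\gamma<H-\tfrac1p$ on compact subsets.

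I expect the main obstacle to be the passage to the limit in the stochastic integral after the Skorokhod embedding: one must make sure that the stochastic convolution, which is \emph{not} a martingale in $t$ and whose definition rests on the Hilbert-space isometry rather than pathwise construction, is stable under the weak-convergence-plus-Skorokhod procedure — i.e.\ that $\int G\,\sigma(\widetilde u^{(n)})\,d\widetilde W^{(n)}\to\int G\,\sigma(\widetilde u)\,d\widetilde W$ in the appropriate sense. This is handled by combining the $L^p$ a priori bound (to get uniform integrability and to bound the $\HH$-norm of the integrands), the Lipschitz continuity of $\sigma$ from \ref{H1}, and the isometry $\EE[(\int g\,dW)^2]=\EE[\|g\|_\HH^2]$; the subtlety is that $\|\cdot\|_\HH$ involves spatial increments weighted by $|y|^{2H-2}$, so one needs the $\cN^*_{\frac12-H,p}$-component of the $\cZ^p$-norm — already built into the solution space — to control it, rather than any naive $L^2$ estimate.
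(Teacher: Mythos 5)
Your overall architecture (uniform $\cZ^p$ bounds, uniform H\"older increment estimates, tightness, Skorokhod, limit identification via the isometry, then Kolmogorov for the regularity statement) matches the paper's, but your choice of approximating sequence creates a genuine gap. You approximate by the Picard iterates $u^{(n+1)}=I_0+\int G\,\sigma(u^{(n)})\,dW$ of the \emph{rough} equation and then invoke tightness of $(u^{(n)},W)$. The problem is that each iterate is not an exact solution of any equation: the mild formulation relates $u^{(n+1)}$ to $u^{(n)}$, so after extracting a subsequence and applying Skorokhod you obtain at best a pair of limits $(\widetilde u,\widetilde v)$ with $\widetilde v=I_0+\int G\,\sigma(\widetilde u)\,d\widetilde W$, and you have no way to conclude $\widetilde u=\widetilde v$. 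Closing that identity is exactly a Cauchy/contraction statement for the iterates in $\cZ^p$, which — as you yourself observe — is unavailable under \ref{H1} alone because the box increments of $\sigma(u_1)-\sigma(u_2)$ cannot be controlled by $|u_1-u_2|$ for non-affine $\sigma$. So the tightness argument does not rescue the Picard scheme; it just relocates the missing contraction.

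The paper avoids this by changing the approximation: it mollifies the noise in space, $W_\ep$ as in \eqref{Regu}, and takes $u_\ep$ to be the \emph{exact} solution of the regularized equation \eqref{MildSolRegu} (existence for each fixed $\ep$ is standard since the noise is then spatially smooth; the Picard iteration appears only internally, for fixed $\ep$, to prove the uniform bound of Lemma \ref{weak_est} via a fractional Gronwall argument). Because each $u_\ep$ solves its own closed equation, the tightness supplied by Lemma \ref{TimeSpaceRegBdd}, together with the $\cZ^p$ bound and the isometry of Proposition \ref{Int_Gen}, lets one pass to the limit $\ep\to 0$ and identify the limit as a weak solution; this is the step you would need to replace your Picard iterates with. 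Your remaining ingredients (the a priori bound on the stochastic convolution, the increment estimates of order $|h|^{\gamma}$ with $\gamma<H-\frac1p$, and the final Kolmogorov upgrade) are consistent with Proposition \ref{prop_est}, Proposition \ref{p.4.1} and Lemma \ref{TimeSpaceRegBdd}.
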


\begin{thm}\label{Thm_2}
	Assume that $\sigma(t,x,u)$ satisfies the  hypothesis \ref{H2} and that $I_0(t,x)$ is in $\cZ^p(T)$ for some $p>\frac{2}{4H-1}$. Then \eqref{eq.SWE} has a unique strong solution whose  sample paths are   in $\cC([0, T]\times\RR)$ almost surely.  Moreover,
	the random field  $u(t,x)$ is   H\"{o}lder continuous a.s. on compact subsets of   $[0, T]\times\RR$ with the same exponent as in Theorem \ref{Thm_1}.
\end{thm}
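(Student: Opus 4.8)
\medskip\noindent\textbf{Proof strategy for Theorem \ref{Thm_2}.}
The plan is to construct the strong solution directly on the given filtered probability space by a Picard iteration in the Banach space $\cZ^p(T)$, taking as the essential analytic input the stochastic convolution estimate of Section~\ref{s.3}: it bounds the $\cZ^p(T)$-norm of $(t,x)\mapsto\int_0^t\int_\RR G_{t-s}(x-y)v(s,y)\,W(ds,dy)$ --- with the supremum over $t$ taken inside the $L^p(\Omega)$-norm, and including the fractional derivative --- by an $s$-integral of the $L^p(\Omega)$-norm of $v(s,\cdot)$ together with the weighted $L^p(\Omega)$-norms of its spatial increments $\fD_h v(s,\cdot)$. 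One sets $u^{(0)}=I_0$ and
\[
u^{(n+1)}(t,x)=I_0(t,x)+\int_0^t\int_\RR G_{t-s}(x-y)\,\sigma\bigl(s,y,u^{(n)}(s,y)\bigr)\,W(ds,dy),
\]
and the proof is then the standard three-step scheme (a priori bound, Cauchy property, passage to the limit), the novelty being that the rough noise forces control of the spatial increments of the integrand. This is achieved via the algebraic device that, since $\sigma(s,y,0)=0$, one may write $\sigma(s,y,u(s,y))=\phi_u(s,y)\,u(s,y)$ with $\phi_u(s,y):=\int_0^1\partial_u\sigma(s,y,\theta u(s,y))\,d\theta$; hypothesis \ref{H2} --- i.e.\ \eqref{DuSigam}, \eqref{DxuSigam} and \eqref{DuSigamAdd} --- gives $|\phi_u|\le C$ and $|\fD_h\phi_u(s,y)|\le C\bigl(|h|\wedge1+|\fD_h u(s,y)|\bigr)$, whence
\[
\bigl|\fD_h[\sigma(s,y,u(s,y))]\bigr|\le C\,|\fD_h u(s,y)|+C\,(|h|\wedge1)\,|u(s,y+h)|+C\,|u(s,y)|\,|\fD_h u(s,y)|.
\]
Together with the compact support of $G_{t-s}(x-\cdot)$ and the splitting of every $h$-integral into $|h|\le1$ and $|h|>1$ (both convergent because $2H>0$ near the origin and $2H-2<-1$ at infinity), this lets one feed $v=\sigma(\cdot,\cdot,u^{(n)})$ into the Section~\ref{s.3} estimate and, using the linear growth \eqref{LinearG}, dominate the outcome by the corresponding quantities built from $u^{(n)}$.

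\emph{A priori bound.} Applying the Section~\ref{s.3} estimate to $u^{(n+1)}-I_0$ yields, for every $T_0\le T$,
\[
\|u^{(n+1)}\|_{\cZ^p(T_0)}^2\le C\,\|I_0\|_{\cZ^p(T)}^2+C\int_0^{T_0}g(T_0-s)\,\|u^{(n)}\|_{\cZ^p(s)}^2\,ds
\]
for an explicit kernel $g$ whose local integrability --- and that of its iterated convolutions, which is what makes the scheme converge --- is precisely where the hypothesis $p>\tfrac{2}{4H-1}$ enters. Iterating this inequality gives $\sup_n\|u^{(n)}\|_{\cZ^p(T)}<\infty$, as in \cite[Section~4]{HHLNT2017}.

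\emph{Cauchy property and limit.} With $w^{(n)}:=u^{(n)}-u^{(n-1)}$ one has $u^{(n+1)}-u^{(n)}=G\circledast[\sigma(\cdot,\cdot,u^{(n)})-\sigma(\cdot,\cdot,u^{(n-1)})]$, and, expanding $\sigma(s,y,u^{(n)}(s,y))-\sigma(s,y,u^{(n-1)}(s,y))=\psi_n(s,y)\,w^{(n)}(s,y)$ with $|\psi_n|\le C$ and, by \ref{H2}, $|\fD_h\psi_n(s,y)|\le C\bigl(|h|\wedge1+|\fD_h u^{(n)}(s,y)|+|\fD_h u^{(n-1)}(s,y)|\bigr)$, the spatial increments of $\sigma(\cdot,\cdot,u^{(n)})-\sigma(\cdot,\cdot,u^{(n-1)})$ are bounded by
\[
C|\fD_h w^{(n)}(s,y)|+C(|h|\wedge1)|w^{(n)}(s,y+h)|+C|w^{(n)}(s,y)|\bigl(|\fD_h u^{(n)}(s,y)|+|\fD_h u^{(n-1)}(s,y)|\bigr).
\]
The last, \emph{defect}, terms $|w^{(n)}(s,y)|\,|\fD_h u^{(n)}(s,y)|$ are the real obstacle: they encode precisely the fact, recalled in the introduction, that $|\sigma(u_1)-\sigma(u_2)-\sigma(v_1)+\sigma(v_2)|$ cannot be bounded by a multiple of $|u_1-u_2-v_1+v_2|$ unless $\sigma$ is affine. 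They are tamed, as in \cite{HHLNT2017}, by Hölder's inequality --- keeping the difference factor $w^{(n)}$ in the target norm $\cZ^p$ and absorbing the increment factor $\fD_h u^{(n)}$ into the a priori bound for $u^{(n)}$ (run, if necessary, in a slightly higher $\cZ^q$-norm) --- using once more the compact support of the wave kernel. This gives $\|u^{(n+1)}-u^{(n)}\|_{\cZ^p(T_0)}^2\le C\int_0^{T_0}g(T_0-s)\,\|u^{(n)}-u^{(n-1)}\|_{\cZ^p(s)}^2\,ds$ with a constant controlled by the a priori bounds, and iterating the kernel gives $\sum_n\|u^{(n+1)}-u^{(n)}\|_{\cZ^p(T)}<\infty$, so $u^{(n)}\to u$ in $\cZ^p(T)$. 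The convergence together with the continuity of $v\mapsto G\circledast v$ supplied by Section~\ref{s.3} lets one pass to the limit in the recursion, so $u$ is a mild solution lying in $\cZ^p(T)$.

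\emph{Uniqueness and regularity.} Since the a priori computation forces any solution into $\cZ^p(T)$, applying the same difference estimate to two solutions $u,\bar u$ gives $\|u-\bar u\|_{\cZ^p(T_0)}^2\le C\int_0^{T_0}g(T_0-s)\|u-\bar u\|_{\cZ^p(s)}^2\,ds$, hence $u=\bar u$ after iterating the kernel. Finally, almost sure continuity of the sample paths and H\"older regularity of exponent $\gamma<H-\tfrac1p$ follow exactly as in Theorem~\ref{Thm_1}: the Section~\ref{s.3} bounds also control the $L^p(\Omega)$-norms of the time increments $\Delta_\tau$, the space increments $\fD_h$ and the mixed increments of the stochastic convolution with the expected powers of $|\tau|$ and $|h|$, so Kolmogorov's continuity criterion applies to $u-I_0$, while $I_0\in\cZ^p(T)$ supplies the regularity of the free term. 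The one genuine difficulty in all of this is the treatment of the defect terms in the Cauchy and uniqueness estimates; the other structural obstruction mentioned in the introduction --- the absence of a semigroup property for $G_t$ --- has already been absorbed into the Section~\ref{s.3} estimate through the four-part decomposition of the wave kernel.
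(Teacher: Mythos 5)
Your overall architecture differs from the paper's: you propose a direct Picard iteration for the rough equation in $\cZ^p(T)$, whereas the paper first obtains a \emph{weak} solution (Theorem \ref{Thm_1}, via the smoothed noises $W_\ep$, the uniform bound of Lemma \ref{weak_est} and tightness), then proves \emph{pathwise uniqueness}, and finally invokes Yamada--Watanabe to get the strong solution. This is not a stylistic choice: the paper's detour exists precisely because the step you wave through does not close.

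The gap is in your treatment of the ``defect'' term $|w^{(n)}(s,y)|\,|\fD_h u^{(n)}(s,y)|$ in the Cauchy and uniqueness estimates. You propose to handle it ``by H\"older's inequality --- keeping the difference factor $w^{(n)}$ in the target norm $\cZ^p$ and absorbing the increment factor $\fD_h u^{(n)}$ into the a priori bound \dots in a slightly higher $\cZ^q$-norm.'' H\"older on $\Omega$ cannot do this: to keep $w^{(n)}$ in $L^p(\Omega)$ you would need $\fD_h u^{(n)}\in L^\infty(\Omega)$, and any finite-exponent splitting $\|w\,\fD_h u\|_{L^p(\Omega)}\le\|w\|_{L^{pa}(\Omega)}\|\fD_h u\|_{L^{pb}(\Omega)}$ pushes $w$ into a strictly higher moment than the one your Gronwall iteration controls, so the recursion never closes (and re-running the scheme at the higher exponent reproduces the same problem there). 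The paper's resolution is an $\omega$-wise localization: it introduces the stopping times $\fT_k=\inf\{t:\sup_{s\le t,x}\cN_{\frac12-H}u(s,x)\ge k \text{ or } \sup_{s\le t,x}\cN_{\frac12-H}v(s,x)\ge k\}$, so that on $\{s<\fT_k\}$ one has the pathwise bound $\int_\RR|\fD_h u(s,y)|^2|h|^{2H-2}dh\le k^2$ and the defect term is dominated by $k^2|w(s,y)|^2$ before taking expectations; Gronwall then gives $u=v$ on $\{t<\fT_k\}$ and one lets $k\to\infty$. This requires knowing that $\fT_k\uparrow T$ a.s., i.e.\ that $\sup_{t,x}\cN_{\frac12-H}u(t,x)<\infty$ a.s.\ for any solution --- exactly the content of the hard estimate \eqref{Est.Z2.N} in Proposition \ref{prop_est}(ii), which is available only \emph{after} a solution is in hand. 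That is why existence must be produced by a compactness (weak-solution) argument rather than by a contraction, and why Yamada--Watanabe is needed at the end. Your a priori bound and your identity $\fD_h[\sigma(s,y,u)]=\phi_u\fD_h u+u(\cdot+h)\fD_h\phi_u$ are consistent with the paper's Lemma \ref{weak_est} and with the uniqueness computation, but without the stopping-time device the convergence of your Picard scheme and your uniqueness claim are unproved.
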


\begin{thm}\label{Thm_3}
If the hyperbolic Anderson model  \eqref{eq.HAE} has a solution in $\cZ^p(T)$ for some  $p\ge 2$ and for some $ T> 0$, then the Hurst parameter $H$ must satisfy  $H>1/4$.
\end{thm}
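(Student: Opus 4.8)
The plan is to argue by contradiction using the Wiener chaos expansion of the solution to the hyperbolic Anderson model \eqref{eq.HAE} and to show that the second moment of the chaos terms already diverges when $H\le 1/4$. Suppose $v(t,x)$ is a solution in $\cZ^p(T)$ with $p\ge 2$, so in particular $\EE[v(t,x)^2]<\infty$. Iterating the mild formulation \eqref{eq.MildSol} with $\sigma(t,x,u)=u$, one obtains the formal series $v(t,x)=\sum_{n\ge 0} I_n(f_{n,t,x})$, where $I_n$ denotes the $n$-th multiple Wiener--It\^o integral with respect to $W$ and the kernel is the iterated wave kernel
\[
f_{n,t,x}(s_1,y_1,\dots,s_n,y_n)=G_{t-s_1}(x-y_1)G_{s_1-s_2}(y_1-y_2)\cdots G_{s_{n-1}-s_n}(y_{n-1}-y_n)I_0(s_n,y_n)\,,
\]
restricted to $t>s_1>\dots>s_n>0$. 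By orthogonality of distinct chaoses and the isometry for multiple integrals,
\[
\EE[v(t,x)^2]=\sum_{n\ge 0}\EE\big[I_n(f_{n,t,x})^2\big]\ge \EE\big[I_n(f_{n,t,x})^2\big]\quad\text{for every }n\,.
\]
So it suffices to show that for some fixed $n$ (in fact $n=1$ should already do, or at worst $n=2$) the quantity $\EE[I_n(f_{n,t,x})^2]=\|\widetilde f_{n,t,x}\|^2_{\HH^{\otimes n}}$ is infinite whenever $H\le 1/4$, for suitable initial data; this contradicts square integrability and forces $H>1/4$.

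The concrete computation I would carry out is the $n=1$ term. Choosing initial data so that $I_0(s,y)$ does not vanish identically — e.g. $u_0\equiv$ const, $v_0\equiv 0$, giving $I_0\equiv$ const, or more safely $I_0(s,y)$ smooth, bounded below on a box — the first chaos contribution is
\[
\EE\big[I_1(f_{1,t,x})^2\big]=c_H^2\int_0^t\!\!\int_{\RR^2}\big[\fD_z\big(G_{t-s}(x-\cdot)I_0(s,\cdot)\big)(y)\big]^2|z|^{2H-2}\,dz\,dy\,ds\,,
\]
using the spectral/increment representation \eqref{eq.def_H} of the $\HH$-norm. The point is that for fixed $s<t$ the function $y\mapsto G_{t-s}(x-y)=\frac12\1_{\{|x-y|<t-s\}}$ is an indicator, hence has a jump, so its $\dot H^{\beta}$-seminorm with $\beta=\frac12-H$ is governed by $\int_{\RR}|\fD_z\1_{\{|\cdot|<t-s\}}|^2|z|^{2H-2}dz$, and near $z=0$ the increment of an indicator is of order $1$ on a set of measure $\asymp|z|$, so the integrand behaves like $|z|\cdot|z|^{2H-2}=|z|^{2H-1}$, which is integrable at $0$ iff $2H-1>-1$, i.e. always — so the $z$-singularity at $0$ is fine, and the real issue is the behaviour as $s\uparrow t$. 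As $s\uparrow t$ the support of $G_{t-s}(x-\cdot)$ shrinks to a point and one finds (after multiplying by the smooth factor $I_0$ and carrying out the $dy$ and $dz$ integrals) that the $s$-integrand blows up like $(t-s)^{2H-1}$ as $s\uparrow t$; more precisely $\int_{\RR}\int_{\RR}|\fD_z(G_{t-s}\,I_0)(y)|^2|z|^{2H-2}\,dz\,dy \asymp (t-s)^{2H-1}$, up to constants depending on $I_0$ and its modulus of continuity. Hence $\EE[I_1(f_{1,t,x})^2]\asymp\int_0^t(t-s)^{2H-1}ds$, which is finite iff $2H-1>-1$, i.e. iff $H>0$ — that is not quite sharp, so I would instead pass to the $n=2$ term, where the two nested wave kernels produce an iterated integral $\int_0^t\int_0^{s_1}(\text{kernel})\,ds_2\,ds_1$ whose singularity structure at the diagonal $s_2\uparrow s_1\uparrow t$ carries an extra factor and yields convergence iff $4H-1>0$; equivalently, one sums the contributions of all chaoses and uses that the series $\sum_n c_H^{2n}\int_{\Delta_n}\prod(t_{i}-t_{i+1})^{2H-1}\,dt$ — a Beta-function-type iterated integral — has $n$-th term comparable to $(\Gamma(2H)^n/\Gamma(2nH))\,t^{2nH}$ times combinatorial factors, which already at the level of a single well-chosen term requires $2H-1>-1$ while the \emph{matching} lower bound that makes the necessity sharp comes from the $z\to 0$ analysis combined with the time diagonal; I will organize it so that the clean statement "$\EE[v(t,x)^2]=\infty$ for $H\le 1/4$" drops out from the divergence of one explicitly computed term.

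The main obstacle, and where I expect to spend the real effort, is getting a genuine \emph{lower} bound (not just an upper bound) on $\|\widetilde f_{n,t,x}\|^2_{\HH^{\otimes n}}$ that is sharp enough to see the threshold exactly at $H=1/4$: the $\HH^{\otimes n}$-norm involves the symmetrization $\widetilde f_{n,t,x}$ and the kernel $|y|^{2H-2}$ is a genuine (non-positive-definite-looking but actually positive-definite) distributional kernel, so one cannot just throw away cross terms, and one must exploit positivity of $c_H^2\int|\fD_z g|^2|z|^{2H-2}dz = \text{const}\int|\xi|^{1-2H}|\hat g(\xi)|^2 d\xi$ to reduce to a Fourier-side computation: $\EE[I_n(f_{n,t,x})^2]$ is comparable to an integral over $(\RR)^n$ of $\prod_{j}|\xi_j|^{1-2H}$ against $|\widehat{f_{n,t,x}}(\xi_1,\dots,\xi_n)|^2$, and since $\widehat{G_{t-s}}(\xi)=\frac{\sin((t-s)\xi)}{\xi}$ one is led to estimate $\int \prod_j \frac{\sin^2((s_{j-1}-s_j)\xi_j)}{\xi_j^2}|\xi_j|^{1-2H}\,d\xi$ and then integrate in the $s_j$'s over the simplex. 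The delicate point is that $\int_{\RR}\frac{\sin^2(a\xi)}{\xi^2}|\xi|^{1-2H}\,d\xi \asymp a^{2H}$ for the upper bound but one needs the two-sided version, and then the time integral $\int_{\Delta_n}\prod(s_{j-1}-s_j)^{2H}\cdot(\text{something})\,ds$ must be shown to diverge — this is where, for the hyperbolic (wave) kernel as opposed to the heat kernel, an \emph{extra} time factor appears (because $\widehat G_{t-s}(\xi)=\sin((t-s)\xi)/\xi$ has an extra $\xi^{-1}$, i.e. an extra smoothing, compared to $e^{-(t-s)\xi^2}$), shifting the critical exponent from the parabolic $1/4$ to... still $1/4$ here, and reconciling that coincidence carefully is the crux. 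I would therefore structure the proof as: (1) derive the chaos expansion and reduce to one term by orthogonality; (2) write that term's $L^2$-norm on the Fourier side as above; (3) prove the two-sided bound $\int_{\RR}\frac{\sin^2(a\xi)}{\xi^2}|\xi|^{1-2H}d\xi\asymp a^{2H}$; (4) iterate to get $\EE[I_n(f_{n,t,x})^2]\asymp$ an explicit iterated time integral whose value is finite iff $H>1/4$ (for $n$ large, or via a summability argument over $n$); (5) conclude by contradiction. Steps (3)–(4) are routine special-function estimates; step (4)'s \emph{lower} bound, keeping track of the $I_0$-dependence and ensuring no miraculous cancellation in the symmetrization, is the hard part.
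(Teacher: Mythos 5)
Your overall strategy --- expand $v$ in Wiener chaos, use orthogonality to reduce to a single chaos, observe that $n=1$ only gives $H>0$ and that the threshold must come from $n=2$ --- is exactly the paper's strategy. But the concrete mechanism you propose for extracting the divergence at $H\le 1/4$ does not work, and the step that actually produces it is missing.

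The gap is in your steps (3)--(4). The Fourier transform of the iterated kernel $G_{t-s_1}(x-y_1)G_{s_1-s_2}(y_1-y_2)I_0(s_2,y_2)$ in $(y_1,y_2)$ is \emph{not} a product $\hat G_{t-s_1}(\xi_1)\hat G_{s_1-s_2}(\xi_2)$; the arguments are coupled (one factor is evaluated at $\xi_1+\xi_2$). If you nevertheless decouple and apply the two-sided bound $\int_{\RR}\sin^2(a\xi)|\xi|^{-1-2H}d\xi\asymp a^{2H}$ factor by factor, you obtain $\EE[I_n^2]\asymp\int_{\Delta_n}\prod_j(s_{j-1}-s_j)^{2H}\,ds$, which is \emph{finite for every} $H>0$ and every $n$ --- so the computation you outline detects no threshold at all. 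The divergence does not live on the time diagonal $s_2\uparrow s_1\uparrow t$ as you suggest; it lives either in the off-diagonal frequency region $\xi_1+\xi_2=O(1)$, $|\xi_2|\to\infty$ (where the coupled argument saves a factor and the measure contributes $|\xi_1|^{1-2H}|\xi_2|^{1-2H}\asymp|\xi_2|^{2-4H}$ against $|\hat G_{s_1-s_2}(\xi_2)|^2\asymp\xi_2^{-2}$, giving $\int|\xi_2|^{-4H}d\xi_2=\infty$ iff $H\le\frac14$), or, equivalently, in the small-$h$ spatial-increment integral. The paper takes the second, physical-space route: it first proves the sharp two-sided bound $\EE[|\fD_h I_1(s,y)|^2]\asymp|h|^{2H}$ for small $h$ (for the explicit data $u_0(x)=e^{-x^2}$, $v_0\equiv0$; the lower bound uses $(a+b)^2\ge\frac34a^2-3b^2$ and a direct computation with $\Box_{l,h}G_s$ as a difference of indicators), and then bounds $\EE[|I_2(t,x)|^2]$ from below by
\begin{equation*}
\tfrac34\int_0^t\int_{\RR^2}|G_{t-s}(x-y)|^2\,\EE[|\fD_h I_1(s,y)|^2]\,|h|^{2H-2}\,dh\,dy\,ds
\ \gtrsim\ \int_{|h|<1}|h|^{4H-2}\,dh\,,
\end{equation*}
minus a finite correction term, which is infinite precisely when $H\le\frac14$. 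So what you are missing is a sharp \emph{lower} bound on the spatial roughness of the first chaos (or, on the Fourier side, the exploitation of the frequency coupling); without one of these, your plan proves nothing. Your instinct that the lower bound is "where the real effort would go" is correct --- but as written your reduction discards exactly the structure that produces the divergence. Also, a minor slip: $\int_{\RR^2}|\fD_z(G_{t-s}I_0)(y)|^2|z|^{2H-2}\,dz\,dy\asymp(t-s)^{2H}$, not $(t-s)^{2H-1}$, though this does not affect your (correct) conclusion that $n=1$ is insufficient.
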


\section{Uniform moment bounds}\label{s.3}
In this section, we obtain  the uniform moment estimates of the stochastic convolution with the  noise $\dot{W}$ which  appears in the definition of the mild solution.
These estimates are used later on  to prove the existence and uniqueness of solution to SWE \eqref{eq.SWE}.

\subsection{Uniform moment bounds of stochastic convolution}
Define
\begin{equation}\label{eq.StoConv}
  \Phi(t,x)=\int_{0}^{t}\int_{\RR} G_{t-s}(x-y)v(s,y)W(ds,dy)\,,
\end{equation}
where $G_t(x)$ is the Green's function associated with the wave operator \eqref{eq.SWE}, given by
\eqref{WaveKerG}.

As we mentioned before, the major difficulty  here is that  the  wave Green's function  $G_t(x)$ does not satisfy the semigroup property so that the stochastic Fubini technique  used for stochastic heat equation is no longer  applicable (see Remark 4.3 in \cite{HW2019}). To get around this obstacle, we decompose it into  sum of convolutions of some `nice' kernels. More precisely, we have  the following simple and important lemma which is the key starting point of   our approach and which plays the role of semigroup property of the heat kernel when  the heat equation is investigated  (e.g. \cite{HHLNT2017, HW2019}).
\begin{lem}\label{kernel-sum}
The wave kernel $G_t(x)=\frac 12 \1_{\{|x|<t\}}$ can be expressed as
\begin{equation}\label{eq.SumKer}
  \begin{split}
  G_{t-s}(x-y)=&  \int_{\RR} \cC_{\beta}(t-r,x-z)\cS_{1-\beta}(r-s,z-y)dz \\
  +&\int_{\RR} \cS_{\alpha}(t-r,x-z)\cC_{1-\alpha}(r-s,z-y)dz \\
  +& \int_{\RR} \cS(t-r,x-z)\cE(r-s,z-y)dz \\
  +& \int_{\RR} \cE(t-r,x-z)\cS(r-s,z-y)dz\,,
  \end{split}
\end{equation}
where $\alpha,\beta\in(0,1)$, $\cS(t,x)=\cS_{1}(t,x)=G_t(x)=\frac 12 \1_{\{|x|<t\}}$ and
\begin{equation}\label{eq.cC_alpha}
	\begin{cases}
		\cE(t,x) :=\frac{1}{\pi} \frac{t}{t^2+x^2}\,, \\
  \cS_{\alpha}(t,x) :=\frac{\Gamma(1-\alpha)}{2\pi}\cos\lc\frac{\alpha\pi}{2}\rc \lk(t+|x|)^{\alpha-1}+\hbox{\rm sgn}(t-|x|)\big|t-|x|\big|^{\alpha-1}\rk\,, \\
  \cC_{1-\alpha}(t,x) :=\frac{\Gamma(\alpha)}{2\pi}\bigg[\cos\lc\frac{\alpha\pi}{2}\rc\blk \big|t+|x|\big|^{-\alpha}+\big|t-|x|\big|^{-\alpha}\brk\\
 \qquad\qquad\qquad\qquad -2 \cos\lc \alpha\tan^{-1}\lc\frac{|x|}{t}\rc\rc [t^2+x^2]^{-\frac{\alpha}{2}}\bigg]\,.
	\end{cases}
\end{equation}
\end{lem}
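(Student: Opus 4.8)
The plan is to establish \eqref{eq.SumKer} by taking the spatial Fourier transform of both sides: each convolution in the variable $z$ turns into a product, and the whole identity then collapses to the addition formula $\sin(a+b)=\sin a\cos b+\cos a\sin b$. Using the convention $\widehat f(\xi)=\int_\RR f(x)e^{-ix\xi}\,dx$, one computes directly $\widehat{G_t}(\xi)=\int_{-t}^{t}\tfrac12 e^{-ix\xi}\,dx=\frac{\sin(t|\xi|)}{|\xi|}$, so the left-hand side of \eqref{eq.SumKer} transforms to $\frac{\sin((t-s)|\xi|)}{|\xi|}$. It therefore suffices to show that the sum of the four terms on the right transforms to the same function — which in particular must come out independent of the intermediate time $r\in(s,t)$ and of $\alpha,\beta\in(0,1)$.

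The only genuinely computational step is to identify the Fourier transforms of the four building blocks. For the Poisson kernel this is classical: $\widehat{\cE}(t,\xi)=e^{-t|\xi|}$. For $\cS_\alpha$ with $\alpha\in(0,1)$ I would use that $\cS_\alpha(t,\cdot)$ is even, substitute $u=t\pm x$ in $2\int_0^\infty\cS_\alpha(t,x)\cos(x\xi)\,dx$, and apply the elementary identity $\int_0^\infty u^{\alpha-1}\sin(u\xi)\,du=\Gamma(\alpha)\sin(\tfrac{\alpha\pi}{2})\xi^{-\alpha}$ ($\xi>0$); after invoking $\Gamma(\alpha)\Gamma(1-\alpha)=\pi/\sin(\alpha\pi)$ and $\sin(\alpha\pi)=2\sin(\tfrac{\alpha\pi}{2})\cos(\tfrac{\alpha\pi}{2})$, the light-cone pieces recombine to
\[
\widehat{\cS_\alpha}(t,\xi)=\frac{\sin(t|\xi|)}{|\xi|^{\alpha}}\,,\qquad \alpha\in(0,1)\,,
\]
the same formula holding at $\alpha=1$ by the direct computation above, namely $\widehat{\cS}(t,\xi)=\widehat{G_t}(\xi)=\frac{\sin(t|\xi|)}{|\xi|}$. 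For $\cC_{1-\alpha}$ the first two summands are treated identically, now via $\int_0^\infty u^{-\alpha}\cos(u\xi)\,du=\Gamma(1-\alpha)\sin(\tfrac{\alpha\pi}{2})\xi^{\alpha-1}$, and they produce $\cos(t|\xi|)|\xi|^{\alpha-1}$; the remaining summand, which equals $-2\,\mathrm{Re}\big[(t+i|x|)^{-\alpha}\big]$ since $t+i|x|=\sqrt{t^2+x^2}\,e^{i\tan^{-1}(|x|/t)}$, I would handle by writing $(t+i|x|)^{-\alpha}=\frac{1}{\Gamma(\alpha)}\int_0^\infty u^{\alpha-1}e^{-u(t+i|x|)}\,du$, interchanging the order of integration, and evaluating the resulting oscillatory integral $\int_0^\infty e^{ix(\xi-u)}\,dx$ distributionally — only its Dirac part $\pi\delta(\xi-u)$ survives taking the real part, and it contributes $e^{-t|\xi|}|\xi|^{\alpha-1}$. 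Hence
\[
\widehat{\cC_{1-\alpha}}(t,\xi)=\frac{\cos(t|\xi|)-e^{-t|\xi|}}{|\xi|^{1-\alpha}}\,,\qquad \alpha\in(0,1)\,,
\]
and the subtracted $e^{-t|\xi|}$ is precisely what the two Poisson-kernel convolutions in \eqref{eq.SumKer} will absorb.

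Granting these, the remainder is trigonometry. Writing $a:=(t-r)|\xi|$, $b:=(r-s)|\xi|$ and using the convolution theorem, the transform of the right-hand side of \eqref{eq.SumKer} equals
\begin{align*}
&\frac{1}{|\xi|}\Big[(\cos a-e^{-a})\sin b+\sin a\,(\cos b-e^{-b})+\sin a\,e^{-b}+e^{-a}\sin b\Big]\\
&\qquad=\frac{\sin a\cos b+\cos a\sin b}{|\xi|}=\frac{\sin\big((t-s)|\xi|\big)}{|\xi|}=\widehat{G_{t-s}}(\xi)\,,
\end{align*}
the four exponential terms cancelling in pairs. Thus both sides of \eqref{eq.SumKer} have the same Fourier transform. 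Since each kernel in play is a tempered distribution — $\cS_\alpha(t,\cdot)$ and $\cC_{1-\alpha}(t,\cdot)$ are locally integrable, have only an integrable singularity on the light cone $\{|x|=t\}$, and, after a cancellation of leading-order terms at infinity, are integrable there, so that the $z$-convolutions on the right converge absolutely — equality of transforms yields \eqref{eq.SumKer}, and the independence of the free parameters $r,\alpha,\beta$ is then automatic.

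The step I expect to be the main obstacle is making the two Fourier-transform identities rigorous: $\cS_\alpha(t,\cdot)$ and $\cC_{1-\alpha}(t,\cdot)$ are not integrable, so the transforms must be read in the sense of tempered distributions, and in particular the emergence of the $e^{-t|\xi|}$ term from $\mathrm{Re}\big[(t+i|x|)^{-\alpha}\big]$ needs either the distributional computation indicated above or an Abel regularization together with analytic continuation in $\alpha$. Everything downstream of those identities is routine bookkeeping, so the detailed transform computations would naturally be relegated to an appendix, leaving only the trigonometric assembly in the main text.
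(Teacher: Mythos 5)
Your proposal is correct and follows essentially the same route as the paper: Fourier transform in space, the addition formula $\sin(a+b)=\sin a\cos b+\cos a\sin b$ with the $e^{-t|\xi|}$ terms inserted and cancelled in pairs, identification of $\hat{\cE}$, $\hat{\cS}_\alpha$, $\hat{\cC}_{1-\alpha}$, and the convolution theorem. The paper likewise relegates the transform identities to an appendix (its Lemma \ref{Lem_A.1}), obtaining them from tabulated sine/cosine-transform formulas rather than your distributional computation, but the substance is the same.
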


\begin{proof}
We prove \eqref{eq.SumKer} via  Fourier transform
\[
\hat{f}(\xi)= \cF  [f(\xi)]= \int_\RR e^{-\iota x\xi}f(x) dx\,,\quad\hbox{where} \quad \iota=\sqrt{-1}\,.
\]
The   Fourier transform of  $G_{t+s}(x)$ is
\[
\hat{G}_{t+s}(\xi)=\frac{\sin((t+s)|\xi|)}{|\xi|}\,.
\]
We can decompose  $\hat{G}_{t+s}(\xi)$ into  the summation of  following four items:
\begin{align*}
  \hat{G}_{t+s}(\xi)&=\frac{\sin(t|\xi|)\cos(s|\xi|)}{|\xi|}+\frac{\sin(s|\xi|)\cos(t|\xi|)}{|\xi|}\\
    &=\frac{\sin(t|\xi|)}{|\xi|^{\alpha}}\cdot \frac{\cos(s|\xi|)-e^{-s|\xi|}}{|\xi|^{1-\alpha}}+\frac{\sin(t|\xi|)}{|\xi|}\cdot e^{-s|\xi|}\\
    &+\frac{\sin(s|\xi|)}{|\xi|^{\beta}}\cdot \frac{\cos(t|\xi|)-e^{-t|\xi|}}{|\xi|^{1-\beta}}+\frac{\sin(s|\xi|)}{|\xi|}\cdot e^{-t|\xi|} \,.
\end{align*}
On the other hand, the Fourier transforms of  $\cE(t,x)$, $\cS_{\alpha}(t,x)$ and $\cC_{1-\alpha}(t,x)$  are given as follows
(see Lemma \ref{Lem_A.1}):
\begin{equation}\label{eq.Fourier}
  \hat{\cE}(t,\xi)=e^{-t|\xi|}\,,\quad
  \hat{\cS}_{\alpha}(t,\xi)=\frac{\sin(t|\xi|)}{|\xi|^{\alpha}}\,,\quad
  \hat{\cC}_{1-\alpha}(t,\xi)=\frac{\cos(t|\xi|)-e^{-t|\xi|}}{|\xi|^{1-\alpha}}\,.
\end{equation}
We then  conclude  the proof of  \eqref{eq.SumKer} by the fact the Fourier transformation transforms the convolution to product.
\end{proof}

\begin{rmk}
Readers may wonder why we don't use the following simpler decomposition as we originally attempted:
\begin{align*}
  \hat{G}_{t+s}(\xi)&=\frac{\sin((t+s)|\xi|)}{|\xi|} \\
    &=\frac{\sin(t|\xi|)\cos(s|\xi|)}{|\xi|}+\frac{\sin(s|\xi|)\cos(t|\xi|)}{|\xi|}\\
    &=\frac{\sin(t|\xi|)}{|\xi|^{\alpha}}\cdot \frac{\cos(s|\xi|)}{|\xi|^{1-\alpha}}+\frac{\cos(t|\xi|)}{|\xi|^{\beta}}\cdot\frac{\sin(s|\xi|)}{|\xi|^{1-\beta}}\,.
\end{align*}
The reason is that the following quantity
\begin{align*}
	C_\beta(t,x):=\cF^{-1}\lk \frac{\cos(t|\xi|)}{|\xi|^{\beta}} \rk=c_\beta \lk(t+|x|)^{\beta-1}+|t-|x||^{\beta-1} \rk
\end{align*}
is not integrable. w.r.t. $x\in \RR$ when $0\leq\beta\leq1$.
\end{rmk}

Analogously to idea used in   \cite{HHLNT2017}, we shall seek the solution of \eqref{eq.SWE} in the space  $\cZ^p(T)$. To this end we need to bound   the   $\|\cdot\|_{\cZ^p(T)}$ norm   of the stochastic convolution $\Phi(t,x)$   defined by \eqref{eq.StoConv} and its variant $\cN_{\frac 12-H}\Phi(t,x)$   as stated in the following theorem.

\begin{prop}\label{prop_est}
  For the stochastic convolution $\Phi(t,x)$, we have the following estimates:
  \begin{enumerate}[leftmargin=*]
  \item[\textbf{(i)}] If  $p>\frac{1}{H}$,
  then
     \begin{equation}\label{Est.Zl}
       \Big\|\sup\limits_{t\in[0,T], x\in \RR}\ \left|\Phi(t,x)\right| \Big\|_{L^p(\Omega)}\leq C_{T,p,H}\|v\|_{\cZ^p(T)}\,.
     \end{equation}
  \item[\textbf{(ii)}] If  $p>\frac{2}{4H-1}$,
   then
     \begin{equation}\label{Est.Z2.N}
      \Big\|\sup\limits_{t\in[0,T], x\in \RR}\left|  \cN_{\frac 12-H}\Phi(t,x)\right| \Big\|_{L^p(\Omega)} \leq C_{ T,p,H}\|v\|_{\cZ^p(T)}\,.
     \end{equation}
   \end{enumerate}
\end{prop}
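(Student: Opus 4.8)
The plan is to exploit the kernel decomposition of Lemma~\ref{kernel-sum} as a substitute for the missing semigroup property, in order to rewrite $\Phi$ as a finite sum of stochastic convolutions whose integrands do \emph{not} involve the terminal time $t$, so that the supremum in $t$ can be absorbed by a deterministic kernel rather than by a martingale inequality. Concretely, I would first insert the Da Prato--Zabczyk beta-function identity
\[
1=\frac{\sin(\pi\alpha)}{\pi}\int_s^t (t-r)^{\alpha-1}(r-s)^{-\alpha}\,dr,\qquad 0\le s\le t,\quad \alpha\in(0,1),
\]
into \eqref{eq.StoConv}, and for each $r\in[s,t]$ apply the decomposition \eqref{eq.SumKer} of $G_{t-s}(x-y)$ with intermediate time $r$. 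A stochastic Fubini argument then yields a representation of the form
\[
\Phi(t,x)=\frac{\sin(\pi\alpha)}{\pi}\sum_{i=1}^{4}\int_0^t (t-r)^{\alpha-1}\int_{\RR}K_i(t-r,x-z)\,Z_i(r,z)\,dz\,dr,
\]
where the ``outer'' kernels $K_i$ range over $\{\cC_\beta,\cS_\alpha,\cS,\cE\}$ from \eqref{eq.cC_alpha}, and
\[
Z_i(r,z)=\int_0^r\int_{\RR}(r-s)^{-\alpha}\,\widetilde K_i(r-s,z-y)\,v(s,y)\,W(ds,dy)
\]
is, for each fixed $(r,z)$, a bona fide stochastic integral not depending on $t$, with $\widetilde K_i$ the matching ``inner'' kernel.

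The second step is to estimate $\|Z_i(r,z)\|_{L^p(\Omega)}$ (and its $L^p(\Omega\times\RR)$ norm in $z$) via the Burkholder--Davis--Gundy inequality of Proposition~\ref{Prop.HBDG}, which bounds it by $\sqrt p$ times a square root of $\int_0^r\int_\RR\big[\cN_{\frac12-H,p}\big((r-s)^{-\alpha}\widetilde K_i(r-s,z-\cdot)v(s,\cdot)\big)(y)\big]^2\,dy\,ds$. Expanding the increment $\fD_h\big[\widetilde K_i(r-s,z-\cdot)v(s,\cdot)\big]$ through a Leibniz-type splitting and invoking $v\in\cZ^p(T)$ — which simultaneously controls $\|v(s,\cdot)\|_{L^p(\Omega\times\RR)}$ and $\cN^{*}_{\frac12-H,p}v(s)$ — this should reduce to $\|v\|_{\cZ^p(T)}$ times a time integral $\int_0^r (r-s)^{-2\alpha}\omega_i(r-s)\,ds$, where $\omega_i(\tau)$ aggregates the $L^1$- and $\cN_{\frac12-H}$-type norms of $\widetilde K_i(\tau,\cdot)$ (and of $\Box_{h,l}\widetilde K_i$ for the second-increment pieces). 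By a scaling analysis each such kernel quantity decays like a fixed power of $\tau$, so a suitable choice of $\alpha$ (and of $\beta$ in the first term) makes the integral converge, giving $\sup_{r\le T}\|Z_i(r,z)\|_{L^p(\Omega)}\lesssim\|v\|_{\cZ^p(T)}$ uniformly in $z$.

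For part \textbf{(i)} I would then take $L^p(\Omega)$ norms in the factorized formula, push the norm inside by Minkowski's integral inequality, and estimate
\[
\Big\|\sup_{t\le T,\,x\in\RR}|\Phi(t,x)|\Big\|_{L^p(\Omega)}\lesssim\sum_i\ \sup_{t\le T,\,x\in\RR}\int_0^t (t-r)^{\alpha-1}\!\int_{\RR}|K_i(t-r,x-z)|\,\|Z_i(r,z)\|_{L^p(\Omega)}\,dz\,dr,
\]
and use H\"older's inequality in $r$ together with $\int_\RR|K_i(\tau,z)|\,dz\lesssim\tau^{\kappa_i}$; the finiteness of the resulting $\int_0^T \tau^{(\alpha-1+\kappa_i)q'}\,d\tau$ with $q'=p/(p-1)$, after optimizing $\alpha$, is exactly what forces $p>1/H$. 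For part \textbf{(ii)}, applying $\fD_h$ in the $x$-variable to the same representation moves the increment entirely onto $K_i(t-r,x-z)$ (since $Z_i(r,z)$ is $x$-independent), and repeating the argument with $\int_\RR|K_i(\tau,z)|\,dz$ replaced by $\big(\int_\RR\cN_{\frac12-H}K_i(\tau,\cdot)(z)^2\,dz\big)^{1/2}$-type quantities — whose slower decay in $\tau$ is the source of the more restrictive condition — yields \eqref{Est.Z2.N} under $p>\tfrac{2}{4H-1}$.

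The main obstacle is the second step: establishing the sharp $\tau$-decay of the $L^1$, $L^2$ and $\cN_{\frac12-H}$ norms of the increments $\fD_h\cS_\alpha(\tau,\cdot)$, $\fD_h\cC_{1-\alpha}(\tau,\cdot)$, $\fD_h\cE(\tau,\cdot)$ and of their $\Box_{h,l}$ analogues, and then checking that one common choice of $(\alpha,\beta)\in(0,1)^2$ simultaneously renders convergent every time integral produced in the process — balancing the two singularities $(t-r)^{\alpha-1}$ and $(r-s)^{-\alpha}$ against the kernel decay and the extra $|h|^{2H-2}$ weight. This bookkeeping is precisely the lengthy computation the paper defers to Appendices~\ref{s.6} and~\ref{Lemma for 3.3}; once it is in place, assembling the bounds above is routine.
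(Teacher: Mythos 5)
Your proposal is correct and follows essentially the same route as the paper: the beta-function factorization combined with the kernel decomposition of Lemma~\ref{kernel-sum}, a stochastic Fubini argument to produce the $t$-independent inner integrals $J^{\cK_i}_\theta$, BDG plus the $\cZ^p(T)$-norm of $v$ to control them (the content of Lemmas~\ref{Lem.Est_J} and~\ref{Lem.Est_D}), and H\"older's inequality against the $L^q$-norms of the outer kernels to absorb the supremum in $(t,x)$. The only cosmetic difference is in part \textbf{(ii)}: you keep the increment $\fD_h$ on the outer kernel and estimate its $\cN_{\frac12-H}$-norm, whereas the paper shifts the increment onto $J^{\cK_i}_\theta$ by a change of variables and reuses the BDG machinery there; both placements lead to the same power counting and the same threshold $p>\frac{2}{4H-1}$.
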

\begin{proof} We shall use Lemma \ref{kernel-sum} to prove this proposition.  We divide the proof into two  steps.

\noindent \textbf{Step 1:} In this step, we shall prove part \textbf{(i)}  of the proposition.
 For any $\theta\in(0,1)$ and $i=1,2,3,4$,   set
  \begin{align}
    J^{\cK_{i}}_{\theta}(r,z):=\int_{0}^{r}\int_{\RR}(r-s)^{-\theta}\cK_{i}(r-s,z-y)v(s,y)W(dy,ds)\,, \label{def.JK_theta}
  \end{align}
  where
  \begin{equation}\label{def.cK}
  	\cK_{1}=\cC_{\alpha},~\cK_{2}=\cS_{\alpha},~\cK_{3}=\cS,~\text{and}~\cK_{4}=\cE.
  \end{equation}
  And we define $\bar{\cK}_{i}$ to be the complements  of $\cK_{i}$ according to \eqref{eq.SumKer}, namely,
  \begin{equation}\label{def.cK_conj}
  	\bar{\cK}_{1}=\cS_{1-\alpha},~\bar{\cK}_{2}=\cC_{1-\alpha},~\bar{\cK}_{3}=\cE,~\text{and}~\bar{\cK}_{4}=\cS.
  \end{equation}
Let us set
\begin{align*}
   \Phi_{i}(t,x):=\frac{\sin(\theta\pi)}{\pi} \int_{0}^{t}\int_{\RR}(t-r)^{\theta-1}\bar{\cK}_{i}(t-r,x-z)J^{\cK_{i}}_{\theta}(r,z)dzdr \,,\quad  i=1,2,3,4 \,.
  \end{align*}
Then a stochastic version of Fubini's theorem and Lemma \ref{kernel-sum} yield
\begin{align}\label{eq.Fubini}
\Phi(t,x)=& \int_{0}^{t}\int_{\RR} G_{t-s}(x-y)v(s,y)W(ds,dy)\nonumber\\
=& \frac{\sin(\theta\pi)}{\pi} \int_{0}^{t}\int_{\RR} \int_{s}^{t} (t-r)^{\theta-1} (r-s)^{-\theta} dr \times G_{t-s}(x-y) v(s,y)W(dy,ds)\nonumber\\
=& \sum_{i=1}^{4}\frac{\sin(\theta\pi)}{\pi} \int_{0}^{t}\int_{\RR} \int_{s}^{t}\int_{\RR} (t-r)^{\theta-1} (r-s)^{-\theta} \nonumber\\
&\qquad\qquad\qquad\times \bar{\cK}_{i}(t-r,x-z)\cK_{i}(r-s,z-y)dzdr\times v(s,y)W(dy,ds) \nonumber \\
=&\sum_{i=1}^{4}\frac{\sin(\theta\pi)}{\pi} \int_{0}^{t}\int_{\RR}(t-r)^{\theta-1} \bar{\cK}_{i}(t-r,x-z)J^{\cK_{i}}_{\theta}(r,z)dzdr\nonumber\\
=&\sum_{i=1}^{4}\Phi_{i}(t,x)\,,
\end{align}
where we have applied the identity
\[
 \int_s^t (t-r)^{\theta-1} (r-s)^{-\theta} dr=\frac{\pi}{\sin(\theta\pi)}\,, \quad \theta\in(0,1)\,,0\leq s\leq t\,.
\]
This expression is essential  for us  to derive the desired estimates.
 In the following, we will use $\sum_{i}$ to denote $\sum_{i=1}^{4}$ and $\sup\limits_{t,x}$ to denote $\sup\limits_{t\in[0,T], x\in \RR} $.

  It is clear    by the H\"{o}lder inequality with $1/p+1/q=1$  that for $i=1,\cdots,4$
  \begin{align}
    \sup\limits_{t, x}\ \left|\Phi_{i}(t,x)\right|
    \lesssim&\  \sup\limits_{t, x}\ \int_{0}^{t}(t-r)^{\theta-1}\lc\int_{\RR}|\bar{\cK}_{i}(t-r,x-z)|^{q} dz\rc^{\frac 1q} \nonumber\\
    &\qquad\qquad\qquad\qquad\qquad\quad\times\|J^{\cK_{i}}_{\theta}(r,z)\|_{L^p(\RR)}dr \nonumber\\
    \lesssim&  \lc\sup\limits_{t}\
    \int_{0}^{t}\int_{\RR}r^{q(\theta-1)}|\bar{\cK}_{i}(r,z)|^{q} dzdr\rc^{\frac 1q} \nonumber\\
    &\qquad\qquad\qquad\qquad\qquad \times\lc\int_{0}^{T}\|J^{\cK_{i}}_{\theta}(r,z)\|^p_{L^p(\RR)}dr\rc^{\frac 1p}\nonumber\\
    =&(I_{i}^{(1)})^{1/q}\times (I_{i}^{(2)})^{1/p}  \,,
  \end{align}
where we change the variables $r\to t-r$ and $z\to x-z$ in the second inequality and then it is clear that $\sup_{t,x}$ becomes $\sup_t$ thanks to the translation invariance in space variable of the function.  This  technique will be freely used in the sequel   without mention.  We shall deal with $I_{i}^{(1)},I_{i}^{(2)}$, $i=1, \cdots, 4$,  term by term in the subsequent paragraphs.

First,   let us deal with $I_{i}^{(1)}$  when $i=1,2$.  The cases   $i=3,4$ can be  treated similarly. When $i=1,$ $\cK_{1}=\cC_{\alpha}$ and $\bar{\cK}_{1}=\cS_{1-\alpha}$ defined as \eqref{eq.cC_alpha}. By the change of variable  $z\to rz$, it is easy to see $I_{1}^{(1)}$  can be bounded as
  \begin{align*}
 I_{1}^{(1)}=&   \sup\limits_{t}  \int_{0}^{t}\int_{\RR}r^{q(\theta-1)}|\cS_{1-\alpha}(r,z)|^{q} dzdr \\
   \lesssim&\lk\sup_{t}\ \int_{0}^{t}r^{q\lc\theta-1-\alpha+\frac 1q\rc} dr\rk\times\int_{0}^{\infty}\lt|\lc 1+|z|\rc^{-\alpha}+\sgn(1-|z|)\big|1-|z|\big|^{-\alpha}\rt|^{q}  dz\,.
  \end{align*}
  In order to make sure the above integrals converge, we need
  \begin{equation}\label{Ineq.alpha_I1}
    \alpha q<1\,,~(\alpha+1)q>1\quad  \Leftrightarrow  \quad 0< \alpha<\frac 1q=1-\frac 1p\,,
  \end{equation}
  and also
  \begin{equation}\label{Ineq.theta_I1}
    q\lc\theta-\alpha-1+\frac 1q\rc>-1
    \quad \Leftrightarrow \quad  \theta>1-\frac2q+\alpha \,.
  \end{equation}


When $i=2$, $\cK_{2}=\cS_{\alpha}$ and $\bar{\cK}_{2}=\cC_{1-\alpha}$ which are defined in \eqref{eq.cC_alpha}, we have
  \begin{align}\label{C_alpha}
  I_{2}^{(1)}=&   \sup\limits_{t}\   \int_{0}^{t}\int_{\RR}r^{q(\theta-1)}|\cC_{1-\alpha}(r,z)|^{q} dzdr \\
      \lesssim&\lk\sup_{t}\ \int_{0}^{t}r^{q\lc\theta-1-\alpha+\frac 1q\rc} dr\rk \nonumber \\
    &\times\int_{0}^{\infty}\bigg[\cos\lc\frac{\alpha\pi}{2}\rc \lk\big|1+|z|\big|^{-\alpha}+\big|1-|z|\big|^{-\alpha}\rk\\
    &\qquad\qquad-2\cos\lc\alpha\tan^{-1}(z)\rc \blk 1+z^2\brk^{-\frac{\alpha}{2}}\bigg]^q dz \nonumber\,.
  \end{align}
By  Lemma \ref{Lem_A.1} in the  Appendix \ref{s.6}, $\cC_{1-\alpha}(r,z)$ can be bounded by
\begin{align*}
|\cC_{1-\alpha} (r,z)|
\lesssim
\begin{cases}  \big|r+|z|\big|^{-\alpha}+\big|r-|z|\big|^{-\alpha}+\blk r^2+z^2\brk^{-\frac{\alpha}{2}} &  \hbox{if $|z|\approx r$}\,, \\
  r\blc |z|^2-r^2\brc^{-\frac{\alpha}{2}-1} &
  \hbox{if $|z|\approx\infty$\,.}
\\
\end{cases}
\end{align*}
Thus, in order to make sure \eqref{C_alpha} is bounded, we need
\begin{equation}\label{Ineq.alpha_I2_2}
    \alpha q<1\,,~(\alpha+2)q>1\,\Leftrightarrow 0< \alpha<\frac 1q\,,
  \end{equation}
and
\begin{equation}\label{Ineq.theta_I2}
    q\lc\theta-\alpha-1+\frac 1q\rc>-1\Leftrightarrow \theta>1-\frac2q+\alpha\,.
  \end{equation}
Therefore, to prove part \textbf{(i)} of the proposition we only need to show
 \begin{equation*}
  \EE\|J^{\cK_{i}}_{\theta}(r,\cdot)\|_{L^p(\RR)}^p \leq C\|v\|_{\cZ^p(T)}^p,\ \ i=1,2,3,4\,.
  \end{equation*}
This is  objective of  Lemma \ref{Lem.Est_J}, proved in the Appendix \ref{Lemma for 3.3} under the  following condition:
\begin{equation}\label{condi_J_est}
p>\frac 1H, \ 1-\frac2q+\alpha<\theta<H+\alpha-\frac12, \ \ 1-H<\alpha<1-\frac 1p.
\end{equation}
Therefore, when  $p>\frac 1H$, we can choose $\alpha$ such that  $1-H<\alpha<1-\frac 1p$, and then we see \eqref{Ineq.alpha_I1}, \eqref{Ineq.theta_I1}, and \eqref{condi_J_est} are satisfied since  $\frac 1H>\frac{4}{2H+1}$ if $H<\frac 12$. Thus we have proved   \textbf{(i)} of the proposition for $\Phi_1(t,x)$, $\Phi_2(t,x)$.   The cases for  $\Phi_3(t,x)$ and  $\Phi_4(t,x)$ can be proved similarly. Thus, we complete the proof of part  \textbf{(i)} of the proposition.



\medskip
\noindent \textbf{Step 2:}  Let us now consider part \textbf{(ii)} of the proposition. In order to obtain the desired decay    rate of $\cN_{\frac 12-H}\Phi(t,x)$, we still use the equation  \eqref{eq.Fubini} to express $\Phi(t,x)$   by $J_{\theta}^{\cK_i}$.  Recall our notation  $\fD_{h} \Phi(t,x):=\,\Phi(t,x+h) -\Phi(t,x)$ and same notations  for $\fD_{h}\bar{\cK}_{i}(t-r,z)$, $\fD_{h}J_{\theta}^{\cK_{i}}(r,z)$. Then
  \begin{align}\label{eq.Fubini.N}
      \fD_{h} \Phi(t,x)=\,&\frac{\sin(\theta\pi)}{\pi}\sum_{i}\int_{0}^{t}\int_{\RR}(t-r)^{\theta-1}  \fD_{h}\bar{\cK}_{i}(t-r, x-z)  J_{\theta}^{\cK_{i}}(r,z)dzdr\nonumber\\
      \es\,&\sum_{i}\int_{0}^{t}\int_{\RR}(t-r)^{\theta-1}\bar{\cK}_{i}(t-r,x-z)\fD_{h}J_{\theta}^{\cK_{i}}(r,z) dzdr\,,
  \end{align}
  with the choice of $\cK$ and $\bar{\cK}$ defined by  \eqref{def.cK} and \eqref{def.cK_conj}. Invoking   Minkowski's inequality and then H\"{o}lder's  inequality
  we get
  \begin{align}
      \sup_{t,x}&\ \lc\int_{\RR}|\fD_{h} \Phi(t,x)|^2|h|^{2H-2}dh\rc^{\frac 12}\nonumber\\
      &\ls\sup_{t,x}\ \sum_{i}\bigg(\int_{\RR}\bigg|\int_{0}^{t}\int_{\RR}(t-r)^{\theta-1}\bar{\cK}_{i}(t-r,x-z) \nonumber\\
      &\qquad\qquad\qquad\qquad\qquad\times \fD_{h}J_{\theta}^{\cK_{i}}(r,z) dzdr\bigg|^2\cdot |h|^{2H-2}dh\bigg)^{\frac 12}\nonumber \\
      &\ls\sup_{t,x}\ \sum_{i} \int_{0}^{t}\int_{\RR}(t-r)^{\theta-1}|\bar{\cK}_{i}(t-r,x-z)| \nonumber\\
      &\qquad\qquad\qquad\qquad\qquad\times\lc\int_{\RR}\Big|\fD_{h}J_{\theta}^{\cK_{i}}(r,z)\Big|^2|h|^{2H-2}dh\rc^{\frac 12}dzdr  \nonumber \\
      &\ls\sum_{i} \left(\sup_{t}\ \int_{0}^{t}\int_{\RR}r^{q(\theta-1)}\lt|\bar{\cK}_{i}(r,z)\rt|^q  dzdr\right)^{\frac 1q}\nonumber\\
      &\qquad\qquad\times  \lc\int_{0}^{T}\int_{\RR}\lk\int_{\RR}\Big|\fD_{h}J_{\theta}^{\cK_{i}}(r,z)\Big|^2|h|^{2H-2}dh\rk^{\frac p2} dz dr\rc^{\frac 1p}\nonumber\\
      &=:(J_{i}^{(1)})^{\frac 1q}\times(J_{i}^{(2)})^{\frac 1p}\,.\label{K_k}
  \end{align}

  The first factor $(J_{i}^{(1)})^{\frac 1q}$ in  \eqref{K_k}  is finite if we require that  $\alpha,\theta, p,q$ satisfy  \eqref{Ineq.alpha_I1} and \eqref{Ineq.theta_I1}. Therefore we only need to focus on the second factor $(J_{i}^{(2)})^{\frac 1p}$ in  \eqref{K_k}. By Lemma \ref{Lem.Est_D},   we see
  \begin{equation*}
   \EE \int_{\RR}\Blk\int_{\RR}\lt|\fD_{h}J_{\theta}^{\cK_{i}}(r,z)\rt|^2|h|^{2H-2}dh\Brk^{\frac p2}dz \leq C_{T,p,\alpha,\theta}  \|v\|^p_{\cZ^p(T)}\,,
  \end{equation*}
  under the conditions
  \begin{equation}\label{condi_D_est}
   p>\frac 1H, \ 1-2/q+\alpha<\theta<2H+\alpha-1, \ \  \frac 32-2H<\alpha<1-\frac 1p.
\end{equation}
If  $p>\frac{2}{4H-1}$,  then we can choose $\alpha$
 such that $\frac 32-2H<\alpha<1-\frac 1p$, and then we see \eqref{Ineq.alpha_I1}, \eqref{Ineq.theta_I1} and \eqref{condi_D_est} are satisfied since  $\frac{2}{4H-1}>\frac 1H$ when $H<\frac 12$. Thus, we complete the proof of part \textbf{(ii)} of the proposition.
\end{proof}

\subsection{Uniform moment bounds of the approximate solutions}
We approximate  the noise $ W$  by the following smoothing of the noise  with respect to the space variable.
 That   is, for $\ep>0$ we define
 \begin{equation}\label{Regu}
     \frac{\partial }{\partial x} W_{\ep}(t,x) = \int_{\RR}  \rho_{\ep} (x-y)W(t,dy)\,,
 \end{equation}
where $\rho_{\ep}(x)=\frac{1}{\sqrt{2\pi\ep}}\exp(-\frac{x^2}{2\ep})$. The regulated noise $W_\ep$ induces an approximation of  mild solution
 \begin{equation}\label{MildSolRegu}
   u_\ep(t,x)=I_0(t,x)+\int_{0}^{t}\int_{\RR} G_{t-s}(x-y)\sigma(s,y,u_\ep(s,y))W_{\ep}(ds,dy),
 \end{equation}
 where the stochastic
 integral is understood in the It\^{o} sense.
Due to the regularity in space of the noise, the existence and uniqueness of the solution $u_\ep(t,x)$ to above equation is standard (even the higher dimensional case were known (e.g. \cite{HHN2014,Peszat2002} and references therein).

The lemma below asserts that the approximate solution
 $\left\{ u_\ep(t,x)\,, \varepsilon>0\right\}$  is uniformly bounded in the space $\cZ^p(T)$.  More  precisely, we have

\begin{lem}\label{weak_est}
 Let $H\in(\frac 14,\frac 12)$. Assume  that $\sigma(t,x,u)$ satisfies the hypothesis \ref{H1}  and assume that $I_0(t,x)$ is in $  \cZ^p(T)$. Then the approximate solutions $u_\ep$ satisfy
   \begin{equation}\label{ReguBdd}
     \sup_{\ep>0}\|u_\ep\|_{\cZ^p(T)}:=\sup_{\ep>0}\|u_\ep(t,\cdot)\|_{\cZ_1^p(T)} +\sup_{\ep>0}\|u_\ep(t,\cdot)\|_{\cZ_2^p(T)}<\infty.
   \end{equation}
\end{lem}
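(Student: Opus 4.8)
The plan is to establish the bound \eqref{ReguBdd} by a Picard/fixed-point type \emph{a priori estimate}: since the approximate solution $u_\ep$ solves the integral equation \eqref{MildSolRegu}, one treats $\sigma(s,y,u_\ep(s,y))$ as an admissible integrand and applies the moment estimates for the stochastic convolution proven in Proposition \ref{prop_est} directly to $u_\ep$ itself. The key point is that Proposition \ref{prop_est} gives, for $p>\tfrac{2}{4H-1}$,
\[
\Big\|\sup_{t,x}|\Phi(t,x)|\Big\|_{L^p(\Omega)}+\Big\|\sup_{t,x}|\cN_{\frac12-H}\Phi(t,x)|\Big\|_{L^p(\Omega)}\lesssim \|v\|_{\cZ^p(T)}
\]
with $v=\sigma(\cdot,\cdot,u_\ep)$, and by \eqref{LinearG} (which follows from \ref{H1}) we have $\|\sigma(\cdot,\cdot,u_\ep)\|_{\cZ^p_1(T)}\lesssim \|u_\ep\|_{\cZ^p_1(T)}$. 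Similarly, the spatial increment $\fD_h\sigma(s,y,u_\ep(s,y))$ is controlled, via the Lipschitz assumption \eqref{LipSigam}, by $|\fD_h u_\ep(s,y)|$, so that $\|\sigma(\cdot,\cdot,u_\ep)\|_{\cZ^p_2(T)}\lesssim \|u_\ep\|_{\cZ^p_2(T)}$. Hence $\|\sigma(\cdot,\cdot,u_\ep)\|_{\cZ^p(T)}\lesssim \|u_\ep\|_{\cZ^p(T)}$ with a constant independent of $\ep$.

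First I would write, for each fixed $t\le T$,
\[
\|u_\ep(t,\cdot)\|_{\cZ^p([0,t])}\le \|I_0(t,\cdot)\|_{\cZ^p(T)}+\big\|\,G_{t-\cdot}\circledast \sigma(\cdot,\cdot,u_\ep)\,\big\|_{\cZ^p([0,t])},
\]
and bound the second term using Proposition \ref{prop_est} applied on the time interval $[0,t]$, obtaining a bound of the form $C_{T,p,H}\,\big(\int_0^t \Psi(s)\,ds\big)^{1/?}$ where $\Psi(s)$ collects the relevant $L^p(\Omega\times\RR)$-norms and $\cN$-norms of $\sigma(s,\cdot,u_\ep(s,\cdot))$ up to time $s$, hence of $u_\ep$ up to time $s$. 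More precisely one needs a version of the estimate with explicit dependence on the time horizon: inspecting the proof of Proposition \ref{prop_est}, the bound arises from $(I_i^{(1)})^{1/q}$, which is a power of $t$ (since $\int_0^t r^{q(\theta-1)}|\bar\cK_i(r,z)|^q\,dz\,dr$ behaves like a positive power of $t$), times $(I_i^{(2)})^{1/p}$ which is $\big(\int_0^t \|J^{\cK_i}_\theta(r,\cdot)\|^p_{L^p(\RR)}dr\big)^{1/p}$, and the inner Lemma \ref{Lem.Est_J}/\ref{Lem.Est_D} bounds the integrand by $\|v\|^p_{\cZ^p([0,r])}$. This yields an inequality of Gronwall type,
\[
g(t):=\|u_\ep\|^p_{\cZ^p([0,t])}\ \le\ C_1 + C_2\int_0^t g(s)\,\phi(t-s)\,ds,
\]
where $\phi$ is an integrable (indeed power-type) kernel on $[0,T]$ and $C_1,C_2$ depend only on $T,p,H$ and $\|I_0\|_{\cZ^p(T)}$, but not on $\ep$.

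Then I would conclude by a singular Gronwall lemma (Henry's inequality for convolution kernels of the form $(t-s)^{-\gamma}$, $\gamma<1$): since $\phi$ is locally integrable, the inequality $g(t)\le C_1+C_2\int_0^t g(s)\phi(t-s)ds$ implies $g(t)\le C_1 E_\phi(t)$ for a finite increasing function $E_\phi$ on $[0,T]$, giving $\sup_{\ep>0}\|u_\ep\|_{\cZ^p(T)}\le (C_1E_\phi(T))^{1/p}<\infty$. One subtlety to handle along the way is measurability/adaptedness: since $W_\ep$ is a smoothed (still white-in-time) noise, the Burkholder–Davis–Gundy-type bound of Proposition \ref{Prop.HBDG} and hence Proposition \ref{prop_est} apply verbatim to stochastic integrals against $W_\ep$ as well (or one first passes through the bound $\|\fD_h(\cdot)\|_{L^p}$-in-$W_\ep$ $\le$ the same in $W$ since convolving with $\rho_\ep$ only contracts the $\HH$-norm), so the $\ep$-independence is genuine; I would remark on this explicitly.

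The main obstacle is the bookkeeping of the time-dependence in Proposition \ref{prop_est} so as to extract the \emph{convolution} Gronwall structure rather than a plain $\sup$-in-$t$ bound: Proposition \ref{prop_est} as stated only gives $\sup_{t\in[0,T]}$-type control in terms of $\|v\|_{\cZ^p(T)}$, whereas to close the self-referential estimate for $u_\ep$ one must re-run its proof keeping the integral $\int_0^t(\cdots)\,\|v\|^p_{\cZ^p([0,s])}\,ds$ intact and verifying that the resulting kernel is integrable on $[0,T]$ — this is exactly where the constraints \eqref{condi_J_est}, \eqref{condi_D_est} on $\alpha,\theta$ (in particular $\theta<H+\alpha-\frac12$ resp. $\theta<2H+\alpha-1$, forcing $p>\frac1H$ resp. $p>\frac{2}{4H-1}$) are used to guarantee the relevant exponents exceed $-1$. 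Once this refined form of Proposition \ref{prop_est} is in hand, the Gronwall step and the $\ep$-uniformity are routine.
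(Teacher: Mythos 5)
Your overall architecture (a self-referential estimate closed by a singular Gronwall lemma with kernel $(t-s)^{4H-1}$, plus $\ep$-uniformity from domination of the smoothed covariance) matches the paper's, but the central tool you invoke is the wrong one, and this is a genuine gap. Proposition \ref{prop_est} controls $\big\|\sup_{t\in[0,T],x\in\RR}|\Phi(t,x)|\big\|_{L^p(\Omega)}$ and the analogous quantity for $\cN_{\frac12-H}\Phi$, i.e.\ an $L^\infty$-in-$x$ statement inside $L^p(\Omega)$. The norm you must bound here, $\|u_\ep\|_{\cZ^p(T)}=\sup_t\big(\int_\RR\EE|u_\ep(t,x)|^p\,dx\big)^{1/p}+\sup_t\cN^*_{\frac12-H,p}u_\ep(t)$, integrates over $x\in\RR$; a supremum over the unbounded spatial domain gives no control of that integral, so no amount of re-running the proof of Proposition \ref{prop_est} with the time-dependence kept explicit will produce the $\cZ^p_1$ or $\cZ^p_2$ bounds. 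The paper's proof in fact uses neither Proposition \ref{prop_est} nor the kernel decomposition of Lemma \ref{kernel-sum} for this lemma: because the supremum over $t$ in $\|\cdot\|_{\cZ^p(T)}$ sits \emph{outside} the expectation, the plain Burkholder--Davis--Gundy inequality \eqref{e.bdg} applied at each fixed $(t,x)$, followed by integration in $x$ (Minkowski) and Parseval in the increment variables, already yields the closed system of inequalities for $\|u^{n+1}_\ep(t,\cdot)\|^2_{L^p(\Omega\times\RR)}$ and $[\cN^*_{\frac12-H,p}u^{n+1}_\ep(t)]^2$ with kernels $(t-s)^{2H}$, $(t-s)$ and $(t-s)^{4H-1}$; the heavy machinery of Section \ref{s.3} is reserved for the sup-in-$(t,x)$ estimates needed later for H\"older continuity and the stopping-time argument in the uniqueness proof.

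A second, more minor gap: you run Gronwall directly on $g(t)=\|u_\ep\|^p_{\cZ^p([0,t])}$ without first knowing this quantity is finite; Gronwall applied to a possibly infinite function is vacuous, and spatial integrability of $u_\ep$ over all of $\RR$ is not automatic from the standard existence theory for the smoothed equation. The paper avoids this by setting up the Picard iteration $u^n_\ep$, for which each $\Psi^n_\ep$ is finite by induction, proving the bound uniformly in $n$ and $\ep$ via the fractional Gronwall lemma, and only then passing to the limit $n\to\infty$ with Fatou. Your observations that $\|\sigma(\cdot,\cdot,u_\ep)\|_{\cZ^p(T)}\lesssim\|u_\ep\|_{\cZ^p(T)}$ under \ref{H1} and that the $\ep$-dependence disappears because $e^{-\ep|\xi|^2}\le 1$ are both correct and are exactly what the paper uses.
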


\begin{proof}
For notational simplicity  we   assume $\sigma(t,x,u)=\sigma(u)$ without loss of generality because of hypothesis \ref{H1}.  We shall use some     thoughts  similar to   those  in  \cite{HW2019}.
To this end, we define the Picard iteration  as follows:
   \[
    u_\ep^0(t,x)=I_0(t, x)\,,
   \]
   and recursively  for $n=0, 1, 2, \cdots$,
\begin{equation}
    u_\ep^{n+1}(t,x)=I_0(t, x)+\int_{0}^{t}\int_{\RR} G_{t-s}(x-y)\sigma(u_\ep^{n}(s,y))W_{\ep}(ds,dy)\,. \label{Def.u_n+1}
   \end{equation}
From \cite[Lemma 4.12]{HHLNT2019}  it follows that  for  any fixed $\ep>0$
 when $n$ goes to infinity,  the  sequence $u_{\ep}^n(t,x)$ converges to $u_{\ep}(t,x)$ a.s.
 In the following steps 1 and 2, we shall first  bound $\|u_\ep^n\|_{\cZ^p(T)}$ uniformly in $n$,    and $\ep$.
Then,  in step 3 we
use Fatou's lemma to show \eqref{ReguBdd}.

In the following, we will continue to  use the notations $\fD_hf(t,x)$ and $\Box_{h,l}f(t,x)$  previously defined in \eqref{space_dif} and \eqref{box_diff}.


   \noindent{\textbf{Step 1. }}\
   In this step, we bound  the $L^p(\Omega\times\RR)$ norm  of $u^{n+1}_\ep(t,x)$ by the $\cZ^p$  norm  of $u^{n}_\ep(t,x)$.
    Rewrite \eqref{Def.u_n+1} as
   \[
    u^{n+1}_\ep(t,x)=I_0(t, x)+\int_{0}^{t}\int_{\RR} \Blk\Blc G_{t-s}(x-\cdot)\sigma(u^{n}_\ep(s,\cdot))\Brc\ast G_\ep\Brk(y)W(ds,dy)\,.
   \]

 Using  $e^{-\varepsilon |\xi|^2}\leq 1$
 and the condition \eqref{LinearG} on $\sigma$,  we have from the Burkholder-Davis-Gundy   inequality \eqref{e.bdg}
   \begin{align}\label{uepLp}
        &\EE\blk|u^{n+1}_\ep(t,x)|^p\brk \nonumber\\
     \leq& C_p |I_0(t, x)|^p
       +C_p\EE\lc\int_{0}^{t}\int_\RR \Big|\cF\blk G_{t-s}(x-\cdot)\sigma(u_\ep(s,\cdot)) \brk (\xi)\Big|^2  e^{-\varepsilon |\xi|^2}| \xi|^{1-2H} d\xi ds\rc^{\frac p2}  \nonumber\\
       \leq& C_p|I_0(t, x)|^p
       + C_p\EE\bigg(\int_{0}^{t}\int_{\RR^2}  \Big|G_{t-s}(x-y-h)\sigma(u^{n}_\ep(s,y+h)) \nonumber\\
       &\qquad\qquad\qquad\qquad\qquad\qquad\qquad-G_{t-s}(x-y)\sigma(u^{n}_\ep(s,y))\Big|^2 |h|^{2H-2}dhdyds\bigg)^{\frac p2} \nonumber\\
       \leq & C_p\lk |I_0(t, x)|^p+\lt| \cD^{\ep,n}_1(t,x) \rt|^{\frac p2}+\lt| \cD^{\ep,n}_2(t,x)\rt|^{\frac p2} \rk\,,
   \end{align}
where we have used the notations $\cD^{\ep,n}_1(t,x)$ and $\cD^{\ep,n}_2(t,x)$ similar to \eqref{eq.D1_def} and \eqref{eq.D2_def}, namely,
   \begin{align*}
     \cD^{\ep,n}_1(t,x) &:=\int_{0}^{t}\int_{\RR^2} \big|\fD_{h}G_{t-s}(y)\big|^2 \cdot \|u^{n}_\ep(s,x+y )\|_{L^p(\Omega)}^2  |h|^{2H-2}dhdyds \,,
\end{align*}
   and
   \begin{align*}
     \cD^{\ep,n}_2(t,x) &:= \int_{0}^{t}\int_{\RR^2}|G_{t-s}(y)|^2\|\fD_h u_{\ep}^n(t,x+y)\|_{L^p(\Omega)}^2
             |h|^{2H-2} dhdyds\,.
   \end{align*}
   This means
   \begin{align}
      \|u^{n+1}_{\ep}(t,\cdot)\|_{L^p(\Omega\times\RR)}^2 =& \lc \int_{\RR} \EE\blk|u^{n+1}_\ep(t,x)|^p\brk dx \rc^{\frac 2p} \nonumber\\
     \leq&\ C_p\lk \|I_0(t, x)\|_{L^p(\Omega\times\RR)}^2+ D^{\ep,n}_1(t)+D^{\ep,n}_2(t)\rk\,,
     \label{Est.u_n+1_Z1}
   \end{align}
where $D^{\ep,n}_1(t)$ and $D^{\ep,n}_2(t)$ are defined and can be  bounded similar to the argument  used in the proof of  Lemma \ref{Lem.Est_J}:
\begin{equation}\label{D1Bdd}
    \begin{split}
 	D^{\ep,n}_1(t):= \lc\int_{\RR} \lt| \cD^{\ep,n}_1(t,x) \rt|^{\frac p2} dx \rc^{\frac 2p}\leq& C_{p,H}\int_{0}^{t} (t-s)^{2H} \|u^n_{\ep}(s,\cdot)\|^2_{L^p(\Omega\times\RR)} ds\,,
    \end{split}
\end{equation}
and
\begin{equation}\label{D1Bdd2}
    \begin{split}
    D^{\ep,n}_2(t):= &\lc \int_{\RR} \lt| \cD^{\ep,n}_2(t,x)\rt|^{\frac p2} dx\rc^{\frac 2p}\leq C_{p,H} \int_{0}^{t}(t-s)\lt[\cN^*_{\frac 12-H,p}u^n_\ep(s)\rt]^2 ds\,.
    \end{split}
\end{equation}
The  above bounds on $D^{\ep,n}_1(t) , D^{\ep,n}_2(t)$ together with \eqref{Est.u_n+1_Z1} yield
\begin{align}
  \|u^{n+1}_{\ep}(t,\cdot)\|_{L^p(\Omega\times\RR)}^2
 &\leq C_{p,H}\bigg( \|I_0(t, x)\|_{L^p(\Omega\times\RR)}^2+ \int_{0}^{t} (t-s)^{2H}\  \|u^n_{\ep}(s,\cdot)\|^2_{L^p(\Omega\times\RR)}  ds \nonumber\\
 &\qquad\qquad\qquad\qquad\quad +\int_{0}^{t}(t-s) \lt[\cN^*_{\frac 12-H,p}u^n_\ep(s)\rt]^2  ds\bigg)\,.
     \label{Est.u_Z1}
\end{align}

\noindent\textbf{Step 2.}\
Next, we  bound    $\cN^*_{\frac 12-H,p}u^{n+1}_\ep(t)$ by the $\cZ^p$  norm  of $u^{n}_\ep(t,x)$.     Similar to \eqref{uepLp} we have
   \begin{align*}
     \EE\blk|\fD_h u^{n+1}_\ep(t,x)|^p\brk\leq& C_p \big| I_0(t,x)-I_0(t,x+h)\big|^p \\
       &+C_p\EE\bigg( \int_{0}^{t}\int_{\RR^2} \Big|\fD_h G_{t-s}(x-y-z)\sigma(u^{n}_\ep(s,y+z)) \\
       &\qquad\qquad\  -\fD_h G_{t-s}(x-z)\sigma(u^{n}_\ep(s,z))\Big|^2|y|^{2H-2} dzdyds\bigg)^{\frac p2} \\
     \leq& C_p\lk \cI_0(t,x,h)+\cI^{\ep,n}_1(t,x,h)+\cI^{\ep,n}_2(t,x,h)\rk\,,
   \end{align*}
   where
   \[
    \cI_0(t,x,h):=\big|I_0(t,x)-I_0(t,x+h)\big|^p\,,
   \]
   \begin{align*}
     \cI^{\ep,n}_1(t,x,h)&:= \EE \bigg(\int_{0}^{t}\int_{\RR^2}\Big| \fD_h G_{t-s}(x-y-z)\Big|^2 \big|\fD_y \sigma(u_\ep(s,z)) \big|^2 |y|^{2H-2}dzdyds\bigg)^{\frac p2}\,,
   \end{align*}
   and
   \begin{align*}
     \cI^{\ep,n}_2(t,x,h)&:= \EE \bigg(\int_{0}^{t}\int_{\RR^2} \Big|\Box_{y,h}G_{t-s}(x-z)\Big|^2 \cdot\big|\sigma(u_\ep(s,z))\big|^2|y|^{2H-2} dzdyds\bigg)^{\frac p2}\,.
   \end{align*}
Thus, by    Minkowski's  inequality we have
   \begin{align*}
     \lk\cN^*_{\frac 12-H,p}u^{n+1}_\ep(t)\rk^2&= \int_{\RR} \|\fD_h u^{n+1}_\ep(t,x) \|^2_{L^p(\RR\times\Omega)} |h|^{2H-2} dh \nonumber\\
     &\leq C_{p} \sum_{j=0}^{2}\int_{\RR}\lc \int_{\RR} \cI^{\ep,n}_j(t,x,h)dx \rc^{\frac 2p} |h|^{2H-2} dh \nonumber\\
     &=:J_0+J_1+J_2.
   \end{align*}
  For the   term $J_0$, it is clear that
\begin{equation}\label{I0Bdd}
   J_0  = C_p\int_\RR \lc\int_\RR \big|\fD_h I_0(t,x)\big|^p  dx\rc^{\frac 2p} |h|^{2H-2} dh= \lk\cN^*_{\frac 12-H,p}I_0(t)\rk^2\,.
   \end{equation}

 We can deal with the    term $J_1$ in the similar    manner as that when  we deal  with  \eqref{Est.Char1J1} in the  proof of Lemma \ref{Lem.Est_D}.
An application  of      Minkowski's inequality and then an application of  Parseval's formula yield   
   \begin{equation}\label{I1Bdd}
     \begin{split}
    J_1   \leq& C_{p,H}\int_{0}^{t}  \int_{\RR^2} \Big| \fD_h G_{t-s}(z) \Big|^2 |h|^{2H-2} dhdz \\
      &\qquad\qquad \times \int_\RR\bigg(\int_\RR \EE\Blk\big|\fD_y u_{\ep}^n(t,x)\big|^p\Brk dx\bigg)^{\frac 2p} |y|^{2H-2} dyds \\
      \leq& C_{p,H}\int_{0}^{t} (t-s)^{2H}\lk\cN^*_{\frac 12-H,p}u^{n}_{\ep}(s)\rk^2 ds\,.
     \end{split}
   \end{equation}
Next,  we   bound  $J_2$. By the condition \eqref{LinearG}  ($|\sigma(u)|\lesssim |u|$)
and by a change  of variable $z\to x-z$, we obtain
   \begin{align*}
    \cI^{\ep,n}_2(t,x,h) \leq C_{p}\EE\lc \int_{0}^{t}\int_{\RR^2} |\Box_{t-s}(z,y,h)|^2 |u^{n}_\ep(s,x-z)|^2 |y|^{2H-2}dydzds\rc^{\frac p2} \,.
   \end{align*}
 In a similar way to that when we deal  with  \eqref{Est.Char1J2} in the  proof of Lemma \ref{Lem.Est_D},   we have
   \begin{equation}\label{I21Bdd}
     \begin{split}
   J_{2}:=& \int_{\RR}\lt| \int_\RR \cI^{\ep,n}_{2}(t,x,h) dx\rt|^{\frac 2p} |h|^{2H-2} dh \\
   \leq&\ C_{p,H} \int_{0}^{t}\int_{\RR^3} |\Box_{y,h}G_{t-s}(z)|^2 |y|^{2H-2}dy |h|^{2H-2} dhdz \\
     &\qquad\qquad\qquad\qquad\times \lc\int_{\RR}\EE |u^{n}_\ep(s,x-z)|^p dx\rc^{\frac 2p} ds\\
     \leq&\ C_{p,H} \int_{0}^{t} (t-s)^{4H-1} \|u^n_\ep(s,\cdot)\|_{L^p(\Omega\times\RR)}^{2}  ds\,.
     \end{split}
   \end{equation}
Thus, we obtain
\begin{align}\label{Est.u_n+1_Z2}
     \lk\cN^*_{\frac 12-H,p}u^{n+1}_\ep(t)\rk^2  \leq&\ C_{p,H} \lk\cN^*_{\frac 12-H,p}I_0(t)\rk^2+C_{p,H}\int_{0}^{t} (t-s)^{2H}\lk\cN^*_{\frac 12-H,p}u^n_{\ep}(s)\rk^2 ds\nonumber \\
     +&\ C_{p,H}\int_{0}^{t} (t-s)^{4H-1} \|u^n_\ep(s,\cdot)\|_{L^p(\Omega\times\RR)}^{2} ds\,.
\end{align}

\noindent\textbf{Step 3.}\
Set
   \[
    \Psi_{\ep}^{n}(t):=\|u^{n}_\ep(t,\cdot)\|_{L^p(\Omega\times\RR)}^{2}+\lk\cN^*_{\frac 12-H,p}u^{n}_\ep(t)\rk^2.
   \]
   Then  combining the estimates \eqref{Est.u_Z1} and \eqref{Est.u_n+1_Z2} yields
   \begin{align*}
     \Psi_{\ep}^{n+1}(t)&\leq C_{p,H,T}\lc \|I_0\|_{\cZ^p(T)}^2 +\int_{0}^{t}  (t-s)^{4H-1}  \Psi_{\ep}^{n}(s) ds\rc\,.
   \end{align*}
 Now  it is relatively easy to see by fractional Gronwall lemma (similar to \cite[Lemma A.2]{CHN2016})
   \[
   \sup_{\varepsilon>0}
    \sup_{n\geq 1}\sup_{t\in[0,T]}\Psi_{\ep}^{n}(t)\leq C_{T,p,H}<\infty\,.
   \]
   Thus,  by the same argument as in the proof of \cite[Lemma 4.5]{HW2019}, we have  that $\sup\limits_{\ep>0}\sup\limits_{t\in[0,T]} \|u_{\ep}(t,\cdot)\|_{L^p(\Omega\times\RR)}$  and $\sup\limits_{\ep>0}\sup\limits_{t\in[0,T]} \cN^*_{\frac 12-H,p}u_{\ep}(t)$ are finite.

   In conclusion, {   we have proved $\sup_{\ep>0}\|u_\ep\|_{\cZ^p(T)}:=\sup\limits_{\ep>0}\sup\limits_{t\in[0,T]} \|u_{\ep}(t,\cdot)\|_{L^p(\Omega\times\RR)}+\sup\limits_{\ep>0}\sup\limits_{t\in[0,T]}\cN^*_{\frac 12-H,p}u_{\ep}(t)$ is finite.   }
\end{proof}

\section{H\"older continuity and well-posedness}\label{s.4}
In this section, we obtain some estimations which imply  the H\"older regularity of the stochastic convolution with respect to our noise $\dot{W}$. Then the similar estimations   of the solution to SWE \eqref{eq.SWE} follow  in a routine way. These estimations   are devoted to prove the tightness   associated with the solution to \eqref{eq.SWE}.

\subsection{H\"older continuity of stochastic convolution }
We have the following regularity results for stochastic convolution $\Phi(t,x)$ defined by  \eqref{eq.StoConv} and the approximated solution $u_\ep$  defined  by  \eqref{MildSolRegu}.
\begin{prop}\label{p.4.1}  Let $v(\cdot, \cdot)\in \cZ^p(T)$   and let  the stochastic convolution $\Phi(t,x)$ be defined by \eqref{eq.StoConv}.
We have the following   H\"older regularity in the space and time variables for $\Phi(t,x)$:
\begin{enumerate}[leftmargin=*]
  \item[\textbf{(i)}] If  $p>\frac{1}{H}$ and ~$0<\gamma<H-\frac 1p$, 
   then
  \begin{equation}\label{Est.t}
  \big\|\sup\limits_{t,t+h\in[0,T], x\in \RR}\
  | \Phi(t+h,x)-\Phi(t,x)|\big\|_{L^p(\Omega)}\leq C_{T,p,H,\gamma}|h|^{\gamma}\|v\|_{\cZ^p(T)}\,.
  \end{equation}
  \item[\textbf{(ii)}] If  $p>\frac{1}{H}$ and ~$0<\gamma<H-\frac 1p$,
  then
  \begin{equation}\label{Est.x}
  \big\|\sup\limits_{t\in[0,T], x,y\in \RR}\ |\Phi(t,x)-\Phi(t,y)|\big\|_{L^p(\Omega)}\leq C_{T,p,H,\gamma}|x-y|^{\gamma}\|v\|_{\cZ^p(T)}\,.
  \end{equation}
  \end{enumerate}
\end{prop}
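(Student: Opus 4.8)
\textbf{Proof proposal for Proposition \ref{p.4.1}.}

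The plan is to mimic the structure of the proof of Proposition \ref{prop_est}: decompose the wave kernel via Lemma \ref{kernel-sum}, write $\Phi(t,x) = \sum_{i=1}^4 \Phi_i(t,x)$ with $\Phi_i$ built from $J^{\cK_i}_\theta$, and then estimate the relevant increments of each $\Phi_i$. For part \textbf{(ii)}, the spatial increment, I would write $\fD_{x-y}\Phi_i(t,\cdot)$ by passing the spatial difference onto the kernel factor $\bar\cK_i(t-r,\cdot)$, exactly as in \eqref{eq.Fubini.N}. After the same Minkowski-then-H\"older splitting that produced \eqref{K_k}, the matter reduces to showing that $\big(\int_0^t\int_\RR r^{q(\theta-1)}|\fD_{x-y}\bar\cK_i(r,z)|^q\,dz\,dr\big)^{1/q}$ is bounded by a constant times $|x-y|^\gamma$, while the $J^{\cK_i}_\theta$-factor is controlled by $\|v\|_{\cZ^p(T)}$ via Lemma \ref{Lem.Est_J} (or its companion). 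Extracting the H\"older factor from the kernel integral is done by interpolation: for the difference $|\fD_{x-y}\bar\cK_i|$ one uses the trivial bound $|\fD_{x-y}\bar\cK_i|\le 2|\bar\cK_i|$ combined with a bound obtained from the derivative or modulus of continuity of $\bar\cK_i$, and then scaling in $r$ produces a power $r^{-\delta}|x-y|^\gamma$ for a suitable $\delta$; choosing $\alpha,\theta$ in the admissible window $1-H<\alpha<1-1/p$, $1-2/q+\alpha<\theta<H+\alpha-\tfrac12$ keeps the resulting $r$-integral convergent precisely when $\gamma<H-1/p$.

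For part \textbf{(i)}, the temporal increment, the argument is analogous but requires more care because the time shift $h$ enters both the factor $(t-r)^{\theta-1}\bar\cK_i(t-r,x-z)$ and the upper limit of integration in $r$, and also the definition of $J^{\cK_i}_\theta(r,z)$ itself does not depend on $t$, which helps. I would split $\Phi_i(t+h,x)-\Phi_i(t,x)$ into the part coming from the extra time strip $r\in[t,t+h]$ and the part where the integrand $(t+h-r)^{\theta-1}\bar\cK_i(t+h-r,x-z) - (t-r)^{\theta-1}\bar\cK_i(t-r,x-z)$ is differenced for $r\in[0,t]$. The first piece is handled by a direct H\"older/size estimate on the short strip, yielding a factor $|h|^{\gamma}$ after balancing exponents; the second piece uses the time-regularity (a modulus-of-continuity or derivative bound) of $r\mapsto r^{\theta-1}\bar\cK_i(r,\cdot)$, again interpolated against the crude bound, to pull out $|h|^\gamma$ at the cost of an integrable singularity in $r$. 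In both pieces the spatial integral $\int_\RR|\bar\cK_i(\cdot,z)|^q\,dz$ (or the increment version) must remain finite, so the same constraints \eqref{Ineq.alpha_I1}–\eqref{Ineq.theta_I1} on $\alpha$ are imposed, and the $J^{\cK_i}_\theta$ factor is bounded by $\|v\|_{\cZ^p(T)}$ exactly as in Step 1 of the proof of Proposition \ref{prop_est}.

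The main obstacle I anticipate is the kernel bookkeeping for $\cK_2=\cS_\alpha$ and $\bar\cK_2=\cC_{1-\alpha}$ (and the $\cE$-terms $i=3,4$): these kernels have competing singularities near $|z|\approx r$ and only polynomial decay as $|z|\to\infty$, so establishing a clean H\"older-in-$(t,x)$ modulus of continuity for $(t,x)\mapsto t^{\theta-1}\bar\cK_i(t,x)$ that is \emph{simultaneously} $L^q$-integrable in $z$ and integrable in $r$ requires splitting the $z$-integral into the near-light-cone and far regions and treating each with a different interpolation exponent, much as in the bound for $|\cC_{1-\alpha}|$ quoted after \eqref{C_alpha}. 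Once these kernel estimates are in hand, the stochastic and $\cZ^p$-side of the argument is routine, being a verbatim repetition of the computations behind \eqref{K_k} and Lemma \ref{Lem.Est_J}. I would relegate the detailed kernel increment estimates to an appendix and keep the body of the proof at the level of the decomposition plus the statement of which admissible range of $(\alpha,\theta)$ makes every integral converge.
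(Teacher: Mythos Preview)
Your proposal is correct and matches the paper's approach closely: decompose via Lemma \ref{kernel-sum}, write $\Phi=\sum_i\Phi_i$, split the temporal increment into an extra-strip piece and a differenced-integrand piece, pass the spatial increment onto $\bar\cK_i$, and reduce everything to kernel increment bounds of the form $\int_0^t\int_\RR r^{q(\theta-1)}|\Delta\bar\cK_i(r,z)|^q\,dz\,dr\lesssim |h|^{\gamma q}$ with the $J^{\cK_i}_\theta$ factor controlled by Lemma \ref{Lem.Est_J}. The one refinement the paper makes that you leave implicit is that the ``differenced integrand'' piece for $r\in[0,t]$ is itself split via the product rule into a term with $\Delta_h(t-r)^{\theta-1}\cdot\bar\cK_i(t-r,\cdot)$ (easy, since $|\Delta_h r^{\theta-1}|\lesssim r^{\theta-1-\gamma}h^\gamma$) and a term with $(t+h-r)^{\theta-1}\cdot\Delta_h\bar\cK_i(t-r,\cdot)$ (hard, requiring the region-by-region kernel analysis you anticipate); treating the product $r^{\theta-1}\bar\cK_i(r,\cdot)$ as a single object, as you suggest, would force you into this same splitting anyway.
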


\begin{proof}
\textbf{Step 1:} In this step, we concentrate on the analysis of the following quantity (we denote $\sup_{t,t+h\in[0,T], x\in \RR} $ by $\sup_{t,x}$)
$$\sup\limits_{t,x}\ |\Delta_h \Phi(t,x)|:= \sup\limits_{t,x}\ \lt| \Phi(t+h,x)-\Phi(t,x)\rt|\,.$$
Assuming $h\in(0,1)$ and $t\in[0,T]$ such that $t+h\leq T$, then by  the  representation formula
 \eqref{eq.Fubini}  and the triangle  inequality we have
\begin{equation*}
       \begin{split}
          \Delta_h \Phi(t,x)
          =\,&\sum_i\frac{\sin(\pi\theta)}{\pi} \bigg[\int_{0}^{t+h}\int_{\RR}  (t+h-r)^{\theta-1}\overline{\cK}_i(t+h-r,x-z) J_{\theta}^{\cK_i}(r,z)drdz\\
          &\qquad\qquad\qquad\qquad\quad-\int_{0}^{t}\int_{\RR}  (t-r)^{\theta-1}\overline{\cK}_i(t-r,x-z) J_{\theta}^{\cK_i}(r,z)drdz\bigg]\\
          \lesssim\,&\sum_{j=1}^{3} \cI_{j}(t,h,x)\,,
       \end{split}
\end{equation*}
where 
    \begin{align*}
      \cI_1(t,h,x):=&\sum_i\cI_1^{(i)}(t,h,x)\\
      :=&\sum_i\int_{0}^{t}\int_{\RR}\Delta_h (t-r)^{\theta-1} \overline{\cK}_i(t-r,x-z) J_{\theta}^{\cK_i}(r,z)drdz
    \end{align*}
    with $\Delta_h (t-r)^{\theta-1}:=  (t+h-r)^{\theta-1}-(t-r)^{\theta-1}$;
    \begin{align*}
      \cI_2(t,h,x)&:=\sum_i\cI_2^{(i)}(t,h,x)\\
      &:=\sum_i\int_{0}^{t}\int_{\RR}(t+h-r)^{\theta-1} \Delta_h \overline{\cK}_i(t-r,x-z) J_{\theta}^{\cK_i}(r,z)drdz,
   \end{align*}
   with $\Delta_h \overline{\cK}_i(t-r,x-z):=  \overline{\cK}_i(t+h-r,x-z)-\overline{\cK}_i(t-r,x-z)$; and
     \begin{eqnarray*}
      \cI_3(t,h,x)&:=&\sum_i\cI_3^{(i)}(t,h,x)\\
      &:=&\sum_i\int_{t}^{t+h}\int_{\RR}(t+h-r)^{\theta-1}  \overline{\cK}_i(t+h-r,x-z) J_{\theta}^{\cK_i}(r,z)drdz.
    \end{eqnarray*}
Our goal is to show that
\begin{equation}\label{Ineq_main_I_j}
\big\|\sup\limits_{t,x}\ \cI_j(t,h,x) \big\|_{L^p(\Omega)} \leq C_{T,p,H,\gamma}|h|^{\gamma}\| v\|_{\cZ^p(T)}\,,\qquad j=1,2,3\,,
\end{equation}
under the conditions
\begin{equation}\label{Condi_iii.main}
	p>\frac 1H\,,\quad 1-H<\alpha<1-\frac 1p\,,\quad \gamma<H-\frac 1p\,.
\end{equation}
We shall first treat   $\cI_1(t,h,x)$ and $\cI_3(t,h,x)$. The term $\cI_2(t,h,x)$ is more complicated and  shall  be  handled lastly.

For the term $\cI_1(t,h,x)$, it is easy to see that for any fixed $\gamma\in(0,1)$,
\begin{equation}\label{h_regular}
\Delta_h (t-r)^{\theta-1}=|(t+h-r)^{\theta-1}-(t-r)^{\theta-1}|\lesssim|t-r|^{\theta-1-\gamma}h^{\gamma}.
\end{equation}
Then by H\"{o}lder's  inequality with $1/p+1/q=1$ and Lemma \ref{Lem.Est_J}, under conditions \eqref{condi_J_est} we have for $i=1,\cdots,4$
 \begin{align}
 \big\|\sup\limits_{t,x}\ &\cI_1^{(i)}(t,h,x) \big\|_{L^p(\Omega)} \nonumber\\
 &\leq \left(\sup\limits_{t,x }\ \int_{0}^{t}\int_{\RR}\lt|\Delta_h (t-r)^{\theta-1}\rt|^q |\overline{\cK}_i(t-r,x-z)|^q dzdr\right)^{1/q} \times \|v\|_{\cZ^p(T)}\nonumber\\
 &\leq \left(\sup\limits_{t}\ \int_{0}^{t}\int_{\RR}|r|^{(\theta-1-\gamma)q} |\overline{\cK}_i(r,z)|^q dzdr\right)^{1/q} \times \|v\|_{\cZ^p(T)}\cdot |h|^{\gamma} \,,\label{J_1}
\end{align}
where in  the last inequality of \eqref{J_1} we have used the change of variables $r\to t-r$ and $z\to z+x$. Now we only need to show
$$\sup\limits_{t}\ \int_{0}^{t}\int_{\RR}|r|^{(\theta-1-\gamma)q} |\overline{\cK}_i(r,z)|^q dzdr<+\infty.$$
We shall only discuss the situation $i=1$. Other cases $i=2,3,4$ can be treated similarly. For $i=1$, we have $\cK_1=\cC_{\alpha},\ \overline{\cK}_1=\cS_{1-\alpha}$ as defined in \eqref{eq.cC_alpha}. Hence, by changing variable $r\to rz$ we have
\begin{align*}
	\sup\limits_{t}&\  \int_{0}^{t}\int_{\RR}|r|^{(\theta-1-\gamma)q} |\cS_{1-\alpha}(r,z)|^q dzdr \\
	\leq& \sup\limits_{t}\ \int_{0}^{t}|r|^{(\theta-1-\gamma)q+1-\alpha q}dr  \cdot \int_0^{\infty} \Big|(1+|z|)^{-\alpha} +\textrm{sgn}(1-|z|) |1-|z||^{-\alpha}\Big|^q dz\,.
\end{align*}
Then by the same argument as in the proof of part $\textbf{(i)}$ of  Proposition \ref{prop_est}, we have
$$\big\|\sup\limits_{t,x}\ \cI_1(t,h,x) \big\|_{L^p(\Omega)} \leq C_{T,p,H,\gamma}|h|^{\gamma}\| v\|_{\cZ^p(T)}$$
under the conditions \eqref{condi_J_est} and $(\theta-1-\gamma)q+1-\alpha q>-1$, which can be summarized as the following conditions
\begin{align}
	p>\frac 1H\,,\quad 1-H<\alpha<1-\frac 1p\,,\quad 1+\alpha-\frac{2}{q}+\gamma<\theta<H+\alpha-\frac 12\,.\label{condi_J_I1}
\end{align}
Since  $p>\frac 1H>2$ implies $\gamma<H-1/p<H+2/q-3/2$  it is clear that one can choose $\alpha$ and $\theta$ satisfying \eqref{condi_J_I1} under conditions \eqref{Condi_iii.main}.
%
%

Now let us deal with the term $\cI_3$.
Using H\"{o}lder's inequality, Lemma \ref{Lem.Est_J} and the change  of variables $z\to z+x$ and $r\to r-t-h$, we have
\begin{align}
	\big\|\sup\limits_{t,x}\ &\cI_3(t,h,x) \big\|_{L^p(\Omega)} \nonumber\\
	\ls\ & \sum_i  \left( \int_{0}^{h}\int_{\RR}r^{q(\theta-1)} | \overline{\cK}_i(r,z)|^q dzdr\right)^{1/q} \times \|v\|_{\cZ^p(T)}\nonumber\\
	=:&\sum_i \left(\cI^{(i)}_3(h) \right)^{1/q}\times \|v\|_{\cZ^p(T)}\,. \label{J_3}
\end{align}
We want to show that $\left( \cI^{(i)}_3(h) \right)^{1/q}\lesssim h^\gamma$ for $i=1, \cdots, 4$ with $p,\ \alpha,\ \gamma$ satisfying  \eqref{Condi_iii.main}.
As before, we only consider the case  $i=1$, i.e. $\cK_1=\cC_{\alpha},\ \overline{\cK}_1=\cS_{1-\alpha}$. The other cases can be handled in similar way.  In this case we have
\begin{align}
	\left(\sup\limits_{t,x}\cI^{(1)}_3(h)\right)^{1/q} =&  \left(\int_{0}^{h}\int_{\RR}r^{q(\theta-1)} | \cS_{1-\alpha}(r,z)|^q dzdr\right)^{1/q} \nonumber\\
	\leq& \left(\int_{0}^{h} r^{q(\theta-1)+1-q\alpha}dr  \right.\nonumber\\
	&\qquad\ \times\left.\int_0^{+\infty}\blk (1+|z|)^{-\alpha}+\textrm{sgn}(1-|z|)|1-|z||^{-\alpha}\brk^q dz\right)^{1/q}\nonumber\\
	\lesssim&\left[|h|^{q(\theta-1)+2  -q\alpha}\right]^{1/q} \le  h^\gamma   \label{Est.I_3(1)}
\end{align}
if \eqref{condi_J_I1} is satisfied (and hence so does \eqref{Condi_iii.main}).
We have similar bound for $ \cI^{(i)}_3(h)$   for $i=2,3,4$. Combing these bounds with \eqref{J_3} we have
$$
 \big\|\sup\limits_{t,x}\ \cI_3(t,h,x) \big\|_{L^p(\Omega)} \leq C_{T,p,H,\gamma} |h|^{\gamma} \| v\|_{\cZ^p(T)}\,,
$$
if $p,\ \alpha,\ \gamma$ satisfy \eqref{Condi_iii.main}.

Lastly, we are going to deal  with $\cI_2$, which is much more complicated. By H\"{o}lder's  inequality,
\begin{eqnarray}
\cI_2(t,h,x)&\leq&\sum_i\left(\int_{0}^{t}\int_{\RR}(t+h-r)^{q(\theta-1)} |\Delta_h \overline{\cK}_i(t-r,x-z)|^q dzdr\right)^{1/q} \nonumber\\
&&\qquad \times \left(\int_{0}^{T}\parallel J_{\theta}^{\cK_i}(r,z)\parallel ^p_{L^p(\RR)}dr\right)^{1/p}\,. \label{J_2}
\end{eqnarray}
The second factor inside the summation  in \eqref{J_2} can be bounded by a multiple of $\|v\|_{\cZ^p(T)}$   via Lemma \ref{Lem.Est_J} under the condition  \eqref{condi_J_est}.
 By the change of variables $r\rightarrow t-r,\ z\rightarrow x-z$,   we see
\begin{align*}
\big\|\sup\limits_{t, x}\ & \cI_2(t,h,x) \big\|_{L^p(\Omega)}\\
&\leq \sum_i\left(\sup\limits_{t}\ \int_{0}^{t}\int_{\RR}(r+h)^{q(\theta-1)} | \Delta_h \overline{\cK}_i(r,z)|^q dzdr\right)^{1/q} \times \|v\|_{\cZ^p(T)} \nonumber\\
&=:\sum_i\left(\sup\limits_{t}\cI^{(i)}_2(t,h)\right)^{1/q} \times \|v\|_{\cZ^p(T)} \,.
\end{align*}
Thus, we shall need to show  that for $i=1,2,3,4$
\begin{equation}\label{Ineq_I_2^i}
\sup\limits_{t}\cI_2^{(i)}(t, h)=\sup\limits_{t}\ \int_{0}^{t}\int_{\RR}(r+h)^{q(\theta-1)} | \Delta_h \overline{\cK}_i(r,z)|^q dzdr \leq C_{T,p,H,\gamma}|h|^{\gamma q},
\end{equation}
to obtain
\begin{equation}\label{Ineq_I_2}
 \big\|\sup\limits_{t,x}\ \cI_2(t,h,x) \big\|_{L^p(\Omega)} \leq C_{T,p,H,\gamma} |h|^{\gamma} \| v\|_{\cZ^p(T)}\,.
\end{equation}
Now,  we shall deal with $\cI_2^{(i)}( t,h)$ for $i=1,2,3,4$ term by term.

\medskip
\noindent $\textbf{Case i=1}$.  Recall   that $\overline{\cK}_1(r,z)=\cS_{1-\alpha}(r,z)$ and $\cK_1(r,z)=\cC_{\alpha}(r,z)$   are defined by \eqref{eq.cC_alpha}.  We shall  show
\begin{equation}
\left\{\begin{split}
\sup\limits_{t}\cI_2^{(1)}(t, h)& \leq C_{T,p,\theta,\alpha} |h|^{\gamma q}\,,\quad \hbox{where}   \\
\cI_2^{(1)}(t, h)=& \int_{0}^{t}\int_{\RR}(r+h)^{q(\theta-1)} |  \Delta_h\cS_{1-\alpha}(r,z)|^q dzdr
\end{split}\right. \label{Est.I_2^(1)}
\end{equation}
for $p,~\gamma$ and $\alpha$ satisfying \eqref{Condi_iii.main}.
Set $A_1:=[|z|<r]$, $A_2:=[|z|>r+2h]$ and $A_3:=[r<|z|<r+2h]$. For fixed $\eta\in(0,1)$,   we see
 \begin{equation}\label{Ineq_important}
 	\begin{cases}
 		\Delta_h |r+|z||^{-\alpha}=\lt||r+|z|+h|^{-\alpha}-|r+|z||^{-\alpha}\rt|\lesssim |r+|z||^{-\alpha-\eta}|h|^{\eta}\,,~&\text{on } \RR\,; \\
 		\Delta_h |r-|z||^{-\alpha}=\lt||r-|z|+h|^{-\alpha}-|r-|z||^{-\alpha}\rt|\lesssim |r-|z||^{-\alpha-\eta}|h|^{\eta}\,,~&\text{on } A_1\,;\\
 		\Delta_h |r-|z||^{-\alpha}=\lt||r-|z|+h|^{-\alpha}-|r-|z||^{-\alpha}\rt|\lesssim ||z|-r-h|^{-\alpha-\eta}|h|^{\eta}\,,~&\text{on } A_2\,.
 	\end{cases}
 \end{equation}
Then we have
\begin{eqnarray*}
|\Delta_h\cS_{1-\alpha}(r,z)|^q
&\leq&\left|\Delta_h \lt|r+|z|\rt|^{-\alpha}\right|^{q} +\Big|\Delta_h |r-|z||^{-\alpha}\cdot\left[\1_{A_1}+\1_{A_2}\right]\Big|^q\\
&&+\Big||r+h-|z||^{-\alpha}+|r-|z||^{-\alpha}\Big|^q \1_{A_3}\\
&\leq&|r+|z||^{(-\alpha-\eta_1)q}h^{\eta_1 q}+|r-|z||^{(-\alpha-\eta_2)q}h^{\eta_2 q}\cdot \1_{A_1}\\
&&+ ||z|-r-h|^{(-\alpha-\eta_3)q} h^{\eta_3 q}\cdot \1_{A_2} \\
&&+\lt||r+h-|z||^{-\alpha}+|r-|z||^{-\alpha}\rt|^q \1_{A_3}\,,
\end{eqnarray*}
for some $\eta_1,\eta_2,\eta_3\in(0,1)$. Therefore,
 \begin{align}\label{Est.I2(1)}
\cI_2^{(1)}(t, h)	\ls\,&\int_0^t\int_{\mathbb{R}} (r+h)^{q(\theta-1)} |r+|z||^{(-\alpha-\eta_1)q}h^{\eta_1 q} dzdr \nonumber\\
	&+\int_0^t\int_{\mathbb{R}} (r+h)^{q(\theta-1)} |r-|z||^{(-\alpha-\eta_2)q}h^{\eta_2 q}\cdot \1_{A_1}dzdr \nonumber\\
	&+\int_0^t\int_{\mathbb{R}} (r+h)^{q(\theta-1)}||z|-r-h|^{(-\alpha-\eta_3)q}h^{\eta_3 q}\cdot \1_{A_2} dzdr \nonumber\\
	&+\int_0^t\int_{\mathbb{R}} (r+h)^{q(\theta-1)} \lt||r+h-|z||^{-\alpha}+|r-|z||^{-\alpha}\rt|^q \cdot \1_{A_3}dzdr\nonumber\\
=:& \sum_{k=1}^4\cI_{2,k}^{(1)}(t,h)\,.
\end{align}
The procedures of dealing terms $\cI_{2,k}^{(1)}(t, h)$, $k=1, 2, 3, 4$ require standard but careful computations which are  included in Appendix \ref{Appen.C}. By Lemma \ref{I_{2,k}^{1}}, for any $p>\frac1H$, $\gamma<H-\frac 1p$, $\cI_{2,k}^{(1)}(t, h)\ (k=1,2,3,4)$ can be bounded by $h^{\gamma q}$ if $\alpha$, $\theta$ satisfy \eqref{list1} and $\eta_k$, $k=1,2,3$ satisfy \eqref{Cond_AppeC1}.

\medskip
\noindent{$\textbf{Case i=2}$}.  \
In this case, we have $\overline{\cK}_2(r,z)=\cC_{1-\alpha}(r,z)$ defined  by  \eqref{eq.cC_alpha}. We want to show when $i=2$, i.e.
\begin{equation}
\left\{\begin{split}
\sup\limits_{t}\cI_2^{(2)}(t, h)& \leq C_{T,p,\theta,\alpha} |h|^{\gamma q}\,,\quad \hbox{where}   \\
\cI_2^{(2)}(t, h)=& \int_{0}^{t}\int_{\RR}(r+h)^{q(\theta-1)} |  \Delta_h\cC_{1-\alpha}(r,z)|^q dzdr
\end{split}\right. \label{Est.I_2^(2)}
\end{equation}
with parameters $p$, $\gamma$ and $\alpha$ satisfying \eqref{Condi_iii.main}.

For fixed $\eta\in(0,1)$, it is not hard to verify
\begin{align}\label{r2_regular}
\lt|\Delta_h\lt(r^2+|z|^2\rt)^{-\frac{\alpha}{2}}\rt|\ls\, &\lt(r^2+|z|^2\rt)^{-\frac{\alpha}{2}(1-\eta)} \lt|\frac{(r+\xi h)\cdot h}{[(r+\xi h)^2+|z|^2]^{\al/2+1}} \rt|^{\eta} \nonumber \\
\ls\, &\lt(r^2+|z|^2\rt)^{-\frac{\alpha}{2}(1-\eta)} \lt|r^2+|z|^2\rt|^{-(\frac{\alpha}{2}+1)\eta}(r+h)^{\eta}|h|^{\eta} \nonumber \\
\ls\, &\lt|r^2+|z|^2\rt|^{-\frac{\alpha}{2}-\eta}\cdot (r+h)^{\eta}|h|^{\eta}\,,
\end{align}
and
\begin{equation}\label{Ineq_imp_2}
\lt|\Delta_h\cos\lc\alpha\tan^{-1}\lc\frac{|z|}{r}\rc\rc\rt|\ls\frac{|z|^{\eta}|h|^{\eta}}{(r^2+z^2)^{\eta}}.
\end{equation}
Then by the above two inequalities    \eqref{r2_regular} and \eqref{Ineq_imp_2},  and the inequalities in \eqref{Ineq_important}, we have
\begin{align}\label{Ineq_C_{1-alpha}}
\Big|\Delta_h\cC_{1-\alpha}(r,z)\Big|^q
\lesssim&|r+|z||^{(-\alpha-\eta_1)q}h^{\eta_1 q}+|r-|z||^{(-\alpha-\eta_2)q}h^{\eta_2 q}\cdot \1_{A_1}\nonumber\\
&+ ||z|-r-h|^{(-\alpha-\eta_3)q} h^{\eta_3 q}\cdot \1_{A_2} \nonumber\\
&+\lt||r+h-|z||^{-\alpha}+|r-|z||^{-\alpha}\rt|^q \1_{A_3}\nonumber\\
&+\lt|r^2+|z|^2\rt|^{(-\frac{\alpha}{2}-\eta_4)q}\cdot (r+h)^{\eta_4q}|h|^{\eta_4q}+\frac{|z|^{\eta_5q}|h|^{\eta_5q}}{(r^2+z^2)^{\eta_5q}}\nonumber\\
=:&\sum_{k=1}^6M_k^{(2)}(r,z)\,.
\end{align}
Substituting this bound into \eqref{Est.I_2^(2)},  we see that,
\begin{eqnarray*}
\sup\limits_{t}\cI_2^{(2)}( t, h)\le \sup\limits_{t}\ \sum_{k=1}^6\cI_{2,k}^{(2)}(t, h)\,,
\end{eqnarray*}
where
\begin{equation}\label{Ineq_I_2,k^2}
\cI_{2,k}^{(2)}(t, h)=\int_{0}^{t}\int_{\RR}(r+h)^{q(\theta-1)}M_k^{(2)}(r,z) dzdr, \ k=1,\cdots,6.
\end{equation}
The first four terms $\cI_{2,k}^{(2)}(t, h),\ k=1,\cdots,4$ are treated in the same  way   as \textbf{Case i=1} and require conditions \eqref{list1} and \eqref{Cond_AppeC1} to guarantee
$$\sup\limits_{t}\cI_{2,k}^{(2)}(t, h)\ls |h|^{\gamma q},\ \ k=1,\cdots,4.$$
We shall deal with the $\cI_{2,5}^{(2)}(t, h)$ and $\cI_{2,6}^{(2)}(t, h)$ in Appendix \ref{Appen.C}. By Lemma \ref{Lem.I_{2,5+6}^2}, $\sup\limits_{t}\cI_{2,k}^{(2)}(t, h)\ls |h|^{\gamma q}$ for $ k=5,6$ under conditions \eqref{list1} and \eqref{Cond_AppeC2}.

As a result, for any $p>\frac 1H,$ $\gamma<H-\frac 1p$, we have $\sup\limits_{t}\cI_{2,k}^{(2)}(t, h)\ls |h|^{\gamma q}$ for $ k=1,\cdots,6$, if $\alpha$, $\theta$ satisfy \eqref{list1} and $\eta_k$ $(k=1,\cdots,6)$ satisfy \eqref{Cond_AppeC1} and \eqref{Cond_AppeC2}.



\medskip
\noindent $\textbf{Case  i=3}$.
In this case we have  $\overline{\cK}_3(r,z)=\cE(r,z)=\frac{1}{\pi}\frac{r}{r^2+z^2}$  and
\begin{align}\label{Est.III3}
\lt|\Delta_h\cE(r,z)\rt|^q&\simeq \lt|\frac{r+h}{(r+h)^2+z^2}-\frac{r}{r^2+z^2}\rt|^q\nonumber\\
&=\lt|\frac{h}{(r+h)^2+z^2}+r\lt[\frac{1}{(r+h)^2+z^2}-\frac{1}{r^2+z^2}\rt]\rt|^q\nonumber\\
&\leq\lt|\frac{h}{(r+h)^2+z^2}\rt|^q+\frac{r^q\cdot|h|^q\cdot|2r+h|^q}{|(r+h)^2+z^2|^{q}\cdot|r^2+z^2|^{q}}.
\end{align}
By H\"{o}lder's  inequality with $\frac{1}{m}+\frac{1}{n}=1$ and  $|2r+h|^q\leq 2^q|r+h|^q$,  we obtain
\begin{align}
\cI_2^{(3)}&(t,h)=\int_0^t\int_{\mathbb{R}}(r+h)^{q(\theta-1)}\lt|\Delta_h\cE(r,z)\rt|^qdzdr\nonumber\\
\lesssim&\int_0^t\int_{\mathbb{R}}(r+h)^{q(\theta-1)}\lt|\frac{h}{(r+h)^2+z^2}\rt|^qdzdr\nonumber\\
&\ \ \ \ \ \ \ \ \ \ \ \ \ \ \ \ \ \ \ \ \ +|h|^q\cdot\int_0^t\int_{\mathbb{R}}\frac{|r+h|^{q\theta}}{|(r+h)^2+z^2|^q}\cdot \frac{|r|^q}{|r^2+z^2|^q}dzdr\nonumber\\
\lesssim&\int_0^t\int_{\mathbb{R}}(r+h)^{q(\theta-1)}\frac{|h|^q}{\lt|(r+h)^2+z^2\rt|^q}dzdr\nonumber\\
&+|h|^q\cdot\lt[\int_0^t\int_{\mathbb{R}}\lt(\frac{|r+h|^{q\theta}}{\lt|(r+h)^2+z^2\rt|^q}\rt)^mdzdr\rt]^{\frac{1}{m}}\cdot \lt[\int_0^t\int_{\mathbb{R}}\lt(\frac{|r|^q}{|r^2+z^2|^q}\rt)^ndzdr\rt]^{\frac{1}{n}}\nonumber\\
=:&\cI_{2,1}^{(3)}(t,h)+|h|^q \left[
\cI_{2,2}^{(3)}(t,h)\right]^{1/m}\left[
\cI_{2,3}^{(3)}(t,h)\right]^{1/n} \label{Inq_K3_h}.
\end{align}
By the change of  variable $z\rightarrow (r+h)z$ in $\cI_{2,1}^{(3)}(t,h)$ and  $\cI_{2,2}^{(3)}(t,h)$, and by the change of variable $z\rightarrow rz$ in   $\cI_{2,3}^{(3)}(t,h)$, we have
\begin{align*}
\cI_2^{(3)} \lesssim&|h|^q\cdot\int_0^t\int_{\mathbb{R}}|r+h|^{q(\theta-1)+1-2q}\frac{1}{(1+z^2)^q}dzdr\\
&+|h|^q\cdot\lt[\int_0^t\int_{\mathbb{R}}\lt(\frac{|r+h|^{1+mq\theta-2qm}}{\lt|1+z^2\rt|^{mq}}\rt)dzdr\rt]^{\frac{1}{m}}\cdot \lt[\int_0^t\int_{\mathbb{R}}\lt(\frac{|r|^{1-nq}}{|1+z^2|^{nq}}\rt)dzdr\rt]^{\frac{1}{n}}\\
\lesssim&|h|^q\cdot\int_0^t|r+h|^{q\theta+1-3q}dr+|h|^q\cdot\lt[\int_0^t|r+h|^{1+mq\theta-2qm}dr\rt]^{\frac{1}{m}}\cdot \lt[\int_0^t|r|^{1-nq}dr\rt]^{\frac{1}{n}}\\
\lesssim& |h|^{q(\theta-1)+2-q}+|h|^{q(\theta-1)+2/m}=|h|^{q(\theta-1)+2-q}+|h|^{q(\theta-1)+2-2/n}
\lesssim |h|^{q\gamma}\,,
\end{align*}
under condition
\begin{equation}\label{gamma_4}
\frac2n>q,\ \theta-\gamma> 2-\frac 2q\,.
\end{equation}
Then $\cI_2^{(3)}(t,h)\leq C_{T,p,H,\gamma}|h|^{\gamma q}$ under \eqref{gamma_4}.


\medskip
\noindent $\textbf{Case i=4}$.
In this case we use  $\overline{\cK}_4(r,z)=\cS(r,z)=\frac{1}{2}\1_{\{|z|<r\}}$. Since $(r+h)^{q(\theta-1)}\leq r^{q(\theta-1)}$, we see
\begin{align*}
\cI_2^{(4)}(t,h) \simeq&\int_0^t\int_{\mathbb{R}}(r+h)^{q(\theta-1)}\lt|\1_{\{z|<r+h\}}-\1_{\{|z|<r+h\}}\rt|^qdzdr\\
\lesssim&\int_0^t\int_{-(r+h)}^{-r}(r+h)^{q(\theta-1)}dzdr+\int_0^t\int_{r}^{r+h}(r+h)^{q(\theta-1)}dzdr\\
=&\int_0^t2h(r+h)^{q(\theta-1)}dr\leq h\int_0^t2r^{q(\theta-1)}dr
\lesssim |h|^{\gamma q},
\end{align*}
where the last inequality requires
\begin{equation}\label{gamma_5}
q(\theta-1)>-1, \ \gamma<\frac 1q.
\end{equation}
Then under   \eqref{gamma_5}, we have
$$\sup_{t,x}
\cI_2^{(4)}(t,h)\ls |h|^{\gamma q}.$$

To conclude, with the choice of $1-H<\alpha<1-\frac 1p\,,~p>\frac{1}{H}\,,~0<\gamma<H-\frac 1p$, we see that the condition \eqref{condi_J_est} to guarantee
$$\left(\int_{0}^{T}\parallel J_{\theta}^{\cK_i}(r,z)\parallel ^p_{L^p(\RR)}dr\right)^{1/p}\lesssim \|v\|_{\cZ^p(T)}\ (i=1,2,3,4)$$
and the conditions listed in \textbf{Case i=1,2,3,4} to guarantee \eqref{Ineq_I_2^i} are all satisfied, so we have \[
\|\underset{t,x}{\sup}\ \cI_2(t,h,x)\|_{L^p(\Omega)}\leq C_{T,p,H,\gamma}|h|^{\gamma}\|v\|_{\cZ^p(T)}\,.
\]
This  finishes  the proof of \textbf{(i)}.


\textbf{Step 2:} In this  step, we deal with
$$\sup\limits_{t\in[0,T], x,y\in \RR}\ \left| \Phi(t,x)-\Phi(t,y)\right| \,. $$
By \eqref{eq.Fubini.N} in the proof of part \textbf{(ii)}  of  Proposition \ref{prop_est}, we have
\begin{align}\label{iv}
\left|\Phi(t,x) -\Phi(t,y)
\right| =&\Bigg|\sum_{i=1}^{4}\frac{\sin(\theta\pi)}{\pi} \int_{0}^{t}\int_{\RR}(t-r)^{\theta-1} [\bar{\cK}_{i}(t-r,x-z) \nonumber\\
&\qquad\qquad\qquad  -\bar{\cK}_{i}(t-r,y-z)]J^{\cK_{i}}_{\theta}(r,z)dzdr\Bigg|  \nonumber\\
&\lesssim\sum_{i=1}^{4}\lt(\int_{0}^{t}\int_{\RR}(t-r)^{q(\theta-1)}\lt|\fD_{\hbar}\bar{\cK}_{i}(t-r,z)\rt|^qdzdr\rt)^{1/q}\nonumber\\
&\qquad\qquad\qquad \ \ \times\lt(\int_0^T\|J^{\cK_{i}}_{\theta}(r,\cdot)\|_{L^p(\mathbb{R})}^pdr\rt)^{1/p}\,,
\end{align}
where $\hbar:=|x-y|$ and $\fD_{\hbar}\bar{\cK}_{i}(t-r,z):=\bar{\cK}_{i}(t-r,z+\hbar)-\bar{\cK}_{i}(t-r,z)$. Without loss of generality, we can suppose that $x>y$ and $\hbar=|x-y|<1$ is sufficiently small. The term  $\lt(\int_0^T\|J^{\cK_{i}}_{\theta}(r,\cdot)\|_{L^p(\mathbb{R})}^pdr\rt)^{1/p}$ in \eqref{iv} can be estimated via Lemma \ref{Lem.Est_J} which requires \eqref{condi_J_est}.
Thus, we need to show for $i=1,\cdots,4$
\begin{align}
	 \sup\limits_{t,x,y}\ \cJ^{(i)}(t,x,y)\leq C_{T,p,H,\gamma}|\hbar|^{\gamma q}\,,\label{iv_main}
\end{align}
where   $\sup\limits_{t,x,y}$ is the is abbreviation for    $\sup\limits_{t\in[0,T], x,y\in \RR}$  and
\begin{align}
	\cJ^{(i)}(t,x,y):= \ \int_{0}^{t}\int_{\RR}& r^{q(\theta-1)}\lt|\fD_{\hbar}\bar{\cK}_{i}(r,z)\rt|^qdzdr\,.\label{iv_main}
\end{align} We are going to bound $\cJ^{(i)}$ for $i=1,2,3,4$ separately.

\medskip
\noindent $\textbf{Case i=1}$. In this case  $\bar{\cK}_1(r,z)=\cS_{1-\alpha}(r,z)$ which is defined by  \eqref{eq.cC_alpha}. We shall show that
\begin{equation}\label{Ineq_J^(1)}
\sup\limits_{t,x,y}\cJ^{(1)}(t,x,y)=\sup\limits_{t,x,y} \int_{0}^{t}\int_{\RR} r^{q(\theta-1)}\lt|\fD_{\hbar}\cS_{1-\alpha}(r,z)\rt|^qdzdr\leq C_{T,p,H,\gamma}|\hbar|^{\gamma q},
\end{equation}
with $\alpha,\ p$ and $\gamma$ satisfying  \eqref{Condi_iii.main}. We  split $\cJ^{(1)}(t,x,y)$ into two parts:
 \begin{align*}
	\cJ^{(1)}(t,x,y)
	=&     \int_{\hbar}^{t}\int_{\RR}r^{q(\theta-1)}\lt|\fD_{\hbar}\bar{\cK}_{1}(r,z)\rt|^qdzdr\\
	&+ \int_{0}^{\hbar}\int_{\RR}r^{q(\theta-1)}\lt|\fD_{\hbar}\bar{\cK}_{1}(r,z)\rt|^qdzdr  \\
	=:& \cJ_1^{(1)}(t,x,y)+\cJ_2^{(1)}(t,x,y)\,.
\end{align*}
Let us treat   the term $\cJ_1^{(1)}(t,x,y)$   first. In this case, $-r+\hbar<r-\hbar.$  Set
\begin{equation}\label{Def.B_set}
	\begin{cases}
		B_1=[z<-\hbar-r]\,,\quad B_2=[z>r+\hbar]\,,\quad B_3=[-r+\hbar<z<r-\hbar]\,;\\
		B_4=[-r-\hbar<z<-r+\hbar]\,,\quad B_5=[r-\hbar<z<r+\hbar]\,.
	\end{cases}
\end{equation}
By the triangle inequality  and   the inequalities \eqref{Ineq_important},    we have
\begin{align}\label{Est.iv1}
\lt| \fD_{\hbar}\cS_{1-\alpha}(r,z)\rt|^q
\simeq\ & \bigg|(r+|z+\hbar|)^{-\alpha}+\textrm{sgn}(r-|z+\hbar|)\big|r-|z+\hbar|\big|^{-\alpha}\nonumber\\
&\qquad\ \ \, -(r+|z|)^{-\alpha}-\textrm{sgn}(r-|z|)\big|r-|z|\big|^{-\alpha}\bigg|^q\nonumber\\
\lesssim\,&\lt|\fD_{\hbar}(r+|z|)^{-\alpha} \rt|^q +\lt|\fD_{\hbar}(r-|z|)^{-\alpha}\rt|^q\cdot\lt(\1_{B_1}+\1_{B_2}+\1_{B_3}\rt)\nonumber\\
&\,+\lt|(r-|z+\hbar|)^{-\alpha}+(r-|z|)^{-\alpha}\rt|^q\cdot\lt(\1_{B_4}+\1_{B_5}\rt)\nonumber\\
\lesssim\,&|r+|z||^{-(\alpha+\eta_1)q}\nonumber\\
&\,+\lt[|r-|z||^{-(\alpha+\eta_2)q}\cdot(\1_{B_1}+\1_{B_2})+|r-\hbar-|z||^{-(\alpha+\eta_3)q}\cdot\1_{B_3}\rt]\nonumber\\
&\,+\lt|(r-|z+\hbar|)^{-\alpha}+(r-|z|)^{-\alpha}\rt|^q\cdot\lt(\1_{B_4}+\1_{B_5}\rt)\nonumber\\
=:&\sum_{k=1}^3N_{1,k}^{(1)}(t,x,y)\,,
\end{align}
Then
\begin{equation}\label{Ineq_J_1,k^1}
 \cJ_1^{(1)}(t,x,y)\lesssim \sum_{k=1}^3\cJ_{1,k}^{(1)}(t,x,y):=\sum_{k=1}^3\int_{\hbar}^{t}\int_{\RR}r^{q(\theta-1)}N_{1,k}^{(1)}(t,x,y)dzdr.
\end{equation}
By Lemma \ref{J_{1,k}^{(1)}}, $\sup\limits_{t,x,y}\cJ_{1,k}^{(1)}(t,x,y)\ls |\hbar|^{\gamma q}$ for $k=1,2,3$ if we require \eqref{list1} and \eqref{Cond_AppeC3}.

 Next, we shall deal with $\cJ_2^{(1)}(t,x,y)$.  In this case, $-r+\hbar\geq r-\hbar.$ Setting
 \begin{equation}\label{Def.C_set}
 	C_1=[z<-r-\hbar],\ C_2=[z>r+\hbar],\ C_3=[-r-\hbar<z<r+\hbar]\,,
 \end{equation}
 then by the inequalities \eqref{Ineq_important},
\begin{align}\label{Est.iv1_2}
\lt| \fD_{\hbar}\cS_{1-\alpha}(r,z)\rt|^q
\simeq\ & \bigg|(r+|z+\hbar|)^{-\alpha}+\textrm{sgn}(r-|z+\hbar|)\big|r-|z+\hbar|\big|^{-\alpha}\nonumber\\
&\qquad\ \ \, -(r+|z|)^{-\alpha}-\textrm{sgn}(r-|z|)\big|r-|z|\big|^{-\alpha}\bigg|^q\nonumber\\
\lesssim\,&\lt|\fD_{\hbar}(r+|z|)^{-\alpha} \rt|^q +\lt|\fD_{\hbar}(r-|z|)^{-\alpha}\rt|^q\cdot\lt(\1_{C_1}+\1_{C_2}\rt)\nonumber\\
&\,+\lt|(r-|z+\hbar|)^{-\alpha}+(r-|z|)^{-\alpha}\rt|^q\cdot \1_{C_3}\nonumber\\
\lesssim\,&|r+|z||^{-(\alpha-\eta_1)q}+|r-|z||^{-(\alpha-\eta_4)q}\cdot(\1_{C_1}+\1_{C_2})\nonumber\\
&\,+\lt|(r-|z+\hbar|)^{-\alpha}+(r-|z|)^{-\alpha}\rt|^q\cdot \1_{C_3}\nonumber\\
=:&\sum_{k=1}^3N_{2,k}^{(1)}(t,x,y)\,.
\end{align}
Thus
\begin{equation}\label{J_{2,K}^1}
\cJ_2^{(1)}(t,x,y)\lesssim \sum_{k=1}^3\cJ_{2,k}^{(1)}(t,x,y):=\sum_{k=1}^3\int_{0}^{\hbar}\int_{\RR}r^{q(\theta-1)}N_{2,k}^{(1)}(t,x,y)dzdr.
\end{equation}
By Lemma \ref{J_{2,k}^{(1)}}, $\sup\limits_{t,x,y}\cJ_{2,k}^{(1)}(t,x,y)\ls |\hbar|^{\gamma q}$ for $k=1,2,3$ under conditions  \eqref{list1} and \eqref{Cond_AppeC4}.

As a result, for any $p>\frac 1H,$ $\gamma<H-\frac 1p$, we know that \eqref{Ineq_J^(1)} holds  if $\alpha$, $\theta$ satisfy \eqref{list1} and $\eta_k,k=1,\cdots,4 $ satisfy   \eqref{Cond_AppeC3} and \eqref{Cond_AppeC4}.

%


\medskip
\noindent $\textbf{Case i=2}$. We   consider $\bar{\cK}_2(r,z)=\cC_{1-\alpha}(r,z)$ defined by  \eqref{eq.cC_alpha}.
We want to obtain
\begin{align}
	\sup\limits_{t,x,y}\cJ^{(2)}(t,x,y)= \sup\limits_{t,x,y} \int_{0}^{t}\int_{\RR}& r^{q(\theta-1)}\lt|\fD_{\hbar}\cC_{1-\alpha}(r,z)\rt|^qdzdr\leq C_{T,p,H,\gamma}|\hbar|^{\gamma q}\,,\label{iv_casei=2}
\end{align}
with parameters $p,\ \alpha,\ \gamma$ satisfy \eqref{Condi_iii.main}. By the triangle inequality,
\begin{align}\label{Est.ivcase2}
\lt| \fD_{\hbar}\cC_{1-\alpha}(r,z)\rt|^q\ls&|\fD_{\hbar}|r+|z||^{-\alpha}|^q+|\fD_{\hbar}|r-|z||^{-\alpha}|^q+\lt|\fD_{\hbar}(r^2+z^2)^{-\frac{\alpha}{2}}\rt|^q\nonumber\\
&+\lt[2\ \fD_{\hbar}\cos\lc\alpha\tan^{-1}\lc\frac{|z|}{r}\rc\rc\rt]^q\lt(r^2+z^2\rt)^{-\frac{\alpha}{2}q}\nonumber\\
=:&\sum_{k=1}^4N_k^{(2)}(r,z)\,.
\end{align}
Substituting \eqref{Est.ivcase2} into \eqref{iv_casei=2}, we have
\begin{eqnarray*}
\sup\limits_{t,x,y}\cJ^{(2)}(t,x,y)\le \sup\limits_{t,x,y} \sum_{k=1}^4\cJ_k^{(2)}(t,x,y)\,,
\end{eqnarray*}
where
$$\cJ_k^{(2)}(t,x,y)=\int_{0}^{t}\int_{\RR}r^{q(\theta-1)}N_k^{(2)}(r,z) dzdr, \ k=1,\cdots,4.$$

For the term $\cJ_1^{(2)}(t,x,y)$, since for fixed $\eta_1\in(0,1)$ , $$|\fD_{\hbar}|r+|z||^{-\alpha}|^q\leq|r+|z||^{-(\alpha+\eta_1)q}|\hbar|^{\eta_1 q},$$
similar to the estimation of $\cI_{2,1}^{(1)}(t,h)$ in \eqref{Est.I2(1)}, we have $\sup\limits_{t,x,y}\cJ_1^{(2)}(t,x,y)\ls |\hbar|^{\gamma q}$
under the  condition    \eqref{condi_eta_1}.

It is more complicated to deal with the term $\cJ_2^{(2)}(t,x,y)$ since $|\fD_{\hbar}|r-|z||^{-\alpha}|^q$ has different upper bounds on different domains  of $|z|$. Similar to $\textbf{Case i=1}$,  we  split $\cJ^{(1)}(t,x,y)$ into two parts
\begin{align}\label{Ineq_J_2^2}
\cJ_2^{(2)}(t,x,y)=&\int_{\frac\hbar2}^{t}\int_{\RR}r^{q(\theta-1)}|\fD_{\hbar}|r-|z||^{-\alpha}|^qdzdr\nonumber\\
&+\int_{0}^{\frac{\hbar}{2} }\int_{\RR}r^{q(\theta-1)}|\fD_{\hbar}|r-|z||^{-\alpha}|^qdzdr\nonumber\\
=:&\cJ_{2,1}^{(2)}(t,x,y)+\cJ_{2,2} ^{(2)}(t,x,y)\,.
\end{align}
We first deal with $\cJ_{2,1}^{(2)}(t,x,y)$ when      $r>\frac\hbar2 $, namely  $-r<r-\hbar$. Let us set
\begin{equation}\label{Def.D_set}
	\begin{cases}
		D_1=[z<-r-\hbar],\quad D_2=[-r-\hbar<z<-r],~D_3=[-r<z<r-\hbar]\,,  \\
		 D_4=[r-\hbar<z<r],~ D_5=[r<z<r+\hbar],~D_6=[r>z+\hbar]\,.
	\end{cases}
\end{equation}
The first integral of \eqref{Ineq_J_2^2} can be bounded by
\begin{align}\label{Ineq_J_21^2}
\cJ_{2,1}^{(2)}(t,x,y)
\ls&\sum_{j=1}^6\int_{0}^{t}\int_{D_j}r^{q(\theta-1)}|\fD_{\hbar}|r-|z||^{-\alpha}|^qdzdr=:\sum_{j=1}^6\cJ_{2,1,j}^{(2)}(t,x,y)\,.
\end{align}
It is not hard to derive that for some  $\eta\in(0,1)$
\begin{equation*}
	\lt|\fD_{\hbar}|r-|z||^{-\alpha}\rt| \ls
	\begin{cases}
		|r-|z||^{-\alpha-\eta}\hbar^{\eta}\,, &\text{on } D_1\cup D_5\cup D_6\,;\\
		|r-|z+\hbar||^{-\alpha-\eta}\hbar^{\eta}\,, &\text{on } D_3\,;\\
		|r-|z+\hbar||^{-\alpha}+|r-|z||^{-\alpha}\,, &\text{on } D_2\cup D_4\,.
	\end{cases}
\end{equation*}
Substituting this into \eqref{Ineq_J_21^2} we obtain
\begin{align*}
\sum_{j=1}^6\cJ_{2,1,j}^{(2)}(t,x,y)\leq&\int_{0}^{t}\int_{D_1\cup D_5}r^{q(\theta-1)}|r-|z||^{-(\alpha+\eta_2)q}\hbar^{\eta_2q}dzdr\\
&+\int_{0}^{t}\int_{D_6}r^{q(\theta-1)}|r-|z||^{-(\alpha+\eta_3)q}\hbar^{\eta_3q}dzdr\\
&+\int_{0}^{t}\int_{D_3}r^{q(\theta-1)}|r-|z+\hbar||^{-(\alpha+\eta_4)q}\hbar^{\eta_4q}dzdr\\
&+\int_{0}^{t}\int_{D_2\cup D_4}r^{q(\theta-1)}\lt(|r-|z+\hbar||^{-\alpha q}+|r-|z||^{-\alpha q}\rt)dzdr\,.
\end{align*}
By Lemma \ref{J_{2,j}^2} in Appendix \ref{Appen.C}, we have $$\sup\limits_{t,x,y}\cJ_{2,1,j}^{(2)}(t,x,y)\ls|\hbar|^{\gamma q},\ j=1,\cdots,6,$$
under conditions \eqref{list1} and \eqref{Cond_AppeC5}.

In similar way we can obtain the same bound
for $\cJ_{2,2} ^{(2)}(t,x,y)$ by  dividing  the domain of $|z|$ into subdomains  and  estimating  each terms. We omit the details  here.

Now we turn to the third and last terms $\cJ_3^{(2)}(t,x,y)$ and   $\cJ_4^{(2)}(t,x,y)$.  Analogously to the obtention of    \eqref{r2_regular}  and \eqref{Ineq_imp_2}, it is not hard to obtain for fixed $\eta\in(0,1),$
\begin{equation}\label{regular_J_3^2}
\lt|\fD_{\hbar}(r^2+z^2)^{-\frac{\alpha}{2}}\rt| \leq(r^2+z^2)^{-(\frac{\alpha}{2}+\eta ) }|z+\hbar|^{\eta  }|\hbar|^{\eta  },
\end{equation}
and
\begin{equation}\label{regular_J_4^2}
 \left| \fD_{\hbar}\cos\lc\alpha\tan^{-1}\lc\frac{|z|}{r}\rc\rc \right| \leq \frac{r^{\eta  } |\hbar|^{\eta  }}{(r^2+z^2)^{\eta  }} .
\end{equation}
Then we have
\begin{align}\label{Ineq_J_3+J_4}
\cJ_3^{(2)}(t,x,y)&+\cJ_4^{(2)}(t,x,y)\nonumber\\
\ls&|\hbar|^{\eta_4 q}\int_{0}^{t}\int_{\RR}r^{q(\theta-1)}(r^2+z^2)^{-(\frac{\alpha}{2}+\eta_4 )q}|z+\hbar|^{\eta_4 q}dzdr\nonumber\\
&+|\hbar|^{\eta_5 q}\int_{0}^{t}\int_{\RR}r^{q(\theta-1)}\frac{r^{\eta_5 q} }{(r^2+z^2)^{\eta_5 q}}(r^2+z^2)^{-\frac{\alpha}{2} q}dzdr.
\end{align}
By Lemma \ref{lem_J_3+J_4}, $\sup\limits_{t,x,y}\cJ_3^{(2)}(t,x,y)$ and $\sup\limits_{t,x,y}\cJ_4^{(2)}(t,x,y)$ can be bounded by a multiple of $|\hbar|^{\gamma q}$ under conditions \eqref{list1} and \eqref{Cond_AppeC6}.

As a result, for any $p>\frac 1H,$ $\gamma<H-\frac 1p$, $\sup\limits_{t,x,y}\cJ^{(2)}(t,x,y)\ls |\hbar|^{\gamma q}$  if $\alpha$, $\theta$ satisfy \eqref{list1},  $\eta_k$  $(k=1,\cdots,5)$ satisfy \eqref{Cond_AppeC5} and \eqref{Cond_AppeC6}.

\bigskip



\medskip
\noindent$\textbf{Case  i=3}$. In this case  $\bar{\cK}_3(r,z)=\cE(r,z)=\frac{1}{\pi} \frac{r}{r^2+z^2}$.  Then
\begin{align}\label{Est.iv3}
\int_{0}^{t}&\int_{\RR}r^{q(\theta-1)}\lt|\fD_{\hbar}\bar{\cK}_{3}(r,z)\rt|^qdzdr=\int_{0}^{t}\int_{\RR}r^{q\theta}\lt|\frac{1}{r^2+(z+\hbar)^2}-\frac{1}{r^2+z^2}\rt|^qdzdr.
\end{align}
The $\hbar=|x-y|$ in \eqref{Est.iv3} plays  the same role  as $h$ in the second term  of \eqref{Est.III3}. So using the similar method as that in dealing with $\lt|\Delta_h\lt(\frac{1}{r^2+z^2}\rt)\rt|^q $ in \textbf{Case i=3} of  \textbf{Step 1}, we have
$$\int_{0}^{t}\int_{\RR}r^{q\theta}\lt|\fD_{\hbar}\lt(\frac{1}{r^2+z^2}\rt)\rt|^qdzdr\lesssim\hbar^{\gamma q},$$
if  $\theta-\gamma>2-\frac2q.$
Thus, under \eqref{Condi_iii.main} we have
$$\lt(\cJ^{(3)}(t,x,y)\rt)^{1/q}\times\lt(\int_0^T\|J_{\theta}^{\cK_3}(r,\cdot)\|_{L^p(\RR)}dr\rt)^{1/p}\lesssim C_{T,p,H,\gamma}|x-y|^{\gamma}\|v\|_{\cZ^p(T)}.$$

\bigskip
\noindent\textbf{Case i=4}. In this case  $\bar{\cK}_4(r,z)=\cS(r,z)=\frac12 \1_{\{|z|<r\}}$.  Then
\begin{align*}
\int_{0}^{t}\int_{\RR}&r^{q(\theta-1)}\lt|\fD_{\hbar}\bar{\cK}_{4}(r,z)\rt|^qdzdr\\
=&\int_{0}^{t}\int_{\RR}r^{q(\theta-1)}\lt|\frac12 \1_{\{|z+x-y|<r\}}-\frac12 \1_{\{|z|<r\}}\rt|^qdzdr\\
\simeq&\int_{0}^{t}r^{q(\theta-1)}\lt(\int_{y-x-r}^{-r}dz+\int_{y-x+r}^{r}dz\rt)dr\simeq\hbar\cdot\int_{0}^{t}r^{q(\theta-1)}\lesssim \hbar^{\gamma q},
\end{align*}
under the conditions  $q(\theta-1)>-1$ and $\gamma<\frac1q.$  Therefore, under \eqref{Condi_iii.main} we have
$$\lt(\cJ^{(4)}(t,x,y)\rt)^{1/q}\times\lt(\int_0^T\|J_{\theta}^{\cK_4}(r,\cdot)\|_{L^p(\RR)}dr\rt)^{1/p}\lesssim C_{T,p,H,\gamma}|x-y|^{\gamma}\|v\|_{\cZ^p(T)}.$$
\bigskip

In conclusion, with the choice of $~p>\frac{1}{H}\,,1-H<\alpha<1-\frac 1p\,,~0<\gamma<H-\frac 1p$, the conditions listed in \textbf{Case i=1,2,3,4} to ensure
\begin{align*}
	\sup\limits_{t,x,y}\cJ^{(i)}(t,x,y)\ls |\hbar|^{\gamma q},\  \,
\end{align*}
and the condition \eqref{condi_J_est} to ensure $$\lt(\int_0^T\|J_{\theta}^{\cK_i}(r,\cdot)\|_{L^p(\RR)}dr\rt)^{1/p}\ls\|v\|_{\cZ^p(T)},$$
are all satisfied. Thus, we have
\[
\|\underset{t\in[0,T],x,y\in\RR}{\sup}\left|\Phi(t,x)-\Phi(t,y)\right|\|_{L^p(\Omega)}\lesssim C_{T,p,H,\gamma}|x-y|^{\gamma}\|v\|_{\cZ^p(T)}\,.
\]
This completes  the proof of \textbf{(ii)}.
  \end{proof}


\subsection{H\"older continuity of the approximate solutions and well-ponsedness}
Analogous  to Proposition \ref{p.4.1} we have the  following regularity results for the approximated solution $u_\ep$ defined in \eqref{MildSolRegu}. The proof is similar and we omit it.
\begin{lem}\label{TimeSpaceRegBdd}
   Let $u_\ep$ be the approximation mild solution defined by  \eqref{MildSolRegu} and assume   that $I_0(t,x)$ belongs to $\cZ^p(T)$.
   \begin{enumerate}[leftmargin=*]
     \item[\textbf{(i)}] If $p>\frac{2}{4H-1}$, then
     \begin{equation}
       \Big\|\sup\limits_{t\in[0,T], x\in \RR} |\cN_{\frac 12-H}u_\ep(t,x)|\Big\|_{L^{p}(\Omega)}\leq C_{T,p,H} \|u_\ep\|_{\cZ^p(T)}\,.
     \end{equation}
     \item[\textbf{(ii)}] If $p>\frac{1}{H}$ and $0<\gamma<H-\frac{1}{p}$, then
     \begin{equation}
       \Big\|\sup\limits_{{t,t+h\in[0,T],  x\in \RR}} | u_\ep(t+h,x)-u_\ep(t,x)|\Big\|_{L^p(\Omega)}\leq C_{T,p,H,\gamma}|h|^{\gamma} \|u_\ep\|_{\cZ^p(T)}\,.
     \end{equation}

     \item[\textbf{(iii)}] If $p>\frac{1}{H}$ and $0<\gamma<H-\frac{1}{p}$, then
     \begin{equation}
       \lt\|\sup\limits_{{t\in[0,T], x,y\in \RR}}|u_\ep(t,x)-u_\ep(t,y)| \rt\|_{L^p(\Omega)}\leq C_{T,p,H,\gamma}|x-y|^{\gamma} \|u_\ep\|_{\cZ^p(T)}\,.
     \end{equation}

   \end{enumerate}
\end{lem}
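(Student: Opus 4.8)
\textbf{Proof proposal for Lemma \ref{TimeSpaceRegBdd}.}

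The plan is to mimic, essentially verbatim, the three arguments used for the stochastic convolution $\Phi(t,x)$ in Propositions \ref{prop_est} and \ref{p.4.1}, but now applied to the stochastic convolution appearing in \eqref{MildSolRegu}, namely
\[
\Psi_\ep(t,x):=\int_{0}^{t}\int_{\RR} G_{t-s}(x-y)\,\sigma(s,y,u_\ep(s,y))\,W_\ep(ds,dy)\,,
\]
so that $u_\ep(t,x)=I_0(t,x)+\Psi_\ep(t,x)$. First I would observe that since $\sigma$ satisfies \ref{H1}, we have $|\sigma(s,y,u_\ep(s,y))|\le C|u_\ep(s,y)|$ pointwise by \eqref{LinearG}, and similarly $\|\fD_h\sigma(s,\cdot,u_\ep(s,\cdot))\|_{L^p(\Omega)}\lesssim \|\fD_h u_\ep(s,\cdot)\|_{L^p(\Omega)}+(\text{a bounded multiple of }|h|)$ when one also uses \eqref{DxuSigam}; in either case one concludes that the random field $v(s,y):=\sigma(s,y,u_\ep(s,y))$ obeys $\|v\|_{\cZ^p(T)}\lesssim \|u_\ep\|_{\cZ^p(T)}+C_T$, and under the a priori bound of Lemma \ref{weak_est} this is finite. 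The mollification $W_\ep$ only contributes a Fourier factor $e^{-\ep|\xi|^2}\le 1$, which (exactly as in \eqref{uepLp}) can be discarded; equivalently, $\rho_\ep$ is a probability density, so every kernel bound used for $W$ carries over to $W_\ep$ with the same constants, uniformly in $\ep$.

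With these reductions in place, the three parts follow directly. For part \textbf{(i)} I would apply Proposition \ref{prop_est}\,\textbf{(ii)} (the $\cN_{\frac12-H}$ estimate, valid for $p>\frac{2}{4H-1}$) to $\Psi_\ep$ with $v=\sigma(\cdot,\cdot,u_\ep)$, and add the contribution of $\cN_{\frac12-H}I_0(t,x)$, which is controlled by $\|I_0\|_{\cZ^p(T)}$ since $I_0\in\cZ^p(T)$; the decomposition \eqref{eq.SumKer} and the bound on $J^{\cK_i}_\theta$ from Lemma \ref{Lem.Est_J} and Lemma \ref{Lem.Est_D} are used exactly as in Step 2 of the proof of Proposition \ref{prop_est}. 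For parts \textbf{(ii)} and \textbf{(iii)} I would invoke Proposition \ref{p.4.1}\,\textbf{(i)} and \textbf{(ii)} respectively, again with $v=\sigma(\cdot,\cdot,u_\ep)$, obtaining the time-increment bound $\lesssim|h|^\gamma\|u_\ep\|_{\cZ^p(T)}$ and the space-increment bound $\lesssim|x-y|^\gamma\|u_\ep\|_{\cZ^p(T)}$; the increments of $I_0$ are handled by the elementary Hölder regularity of $I_0$ from \eqref{def.I_0} (finite speed of propagation plus regularity of $u_0,v_0$), which is again subsumed in $\|I_0\|_{\cZ^p(T)}<\infty$. The kernel decomposition Lemma \ref{kernel-sum} and the Fubini representation \eqref{eq.Fubini}, \eqref{eq.Fubini.N} apply without change because they concern only the deterministic wave kernel $G_{t-s}$.

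The only genuinely new point — and the step I expect to require the most care — is the passage from $W$ to the mollified noise $W_\ep$ uniformly in $\ep$: one must check that the Burkholder--Davis--Gundy inequality \eqref{e.bdg} and all the subsequent kernel estimates hold for $\int_0^t\int_\RR h_t(s,y)\,W_\ep(ds,dy)$ with constants independent of $\ep$. This is true because $\int_\RR\rho_\ep(x-y)\,W(t,dy)$ has the same covariance structure as $W$ after convolution, so $\|h\|_{\HH_\ep}\le \|h\|_{\HH}$ (equivalently, the spectral density $e^{-\ep|\xi|^2}|\xi|^{1-2H}$ is dominated by $|\xi|^{1-2H}$); hence every estimate in Sections \ref{s.3} and \ref{s.4} applies to $W_\ep$ verbatim and uniformly in $\ep$. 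Once this is noted, the remaining computations are line-by-line repetitions of the proofs of Propositions \ref{prop_est} and \ref{p.4.1} — the choice of parameters $\alpha,\theta$ satisfying \eqref{condi_J_est}, \eqref{condi_D_est}, \eqref{Condi_iii.main} is identical — so, as the authors indicate, the proof is omitted.
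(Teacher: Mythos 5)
Your proposal is correct and follows exactly the route the paper intends: the authors omit this proof, stating only that it is analogous to Proposition \ref{p.4.1} (and, for part \textbf{(i)}, Proposition \ref{prop_est}\,\textbf{(ii)}), and your reduction — taking $v=\sigma(\cdot,\cdot,u_\ep)$ with $\|v\|_{\cZ^p(T)}\lesssim\|u_\ep\|_{\cZ^p(T)}$ via \eqref{LinearG} and \eqref{DxuSigam}, and discarding the mollifier through $e^{-\ep|\xi|^2}\le 1$ exactly as in \eqref{uepLp} — is precisely that argument. The only imprecision is cosmetic: the extra term in your bound for $\|\fD_h\sigma(s,\cdot,u_\ep(s,\cdot))\|_{L^p(\Omega)}$ should be $|h|\,\|u_\ep(s,\cdot+h)\|_{L^p(\Omega)}$ rather than a constant multiple of $|h|$, so that it remains integrable in $y$ and against $|h|^{2H-2}dh$.
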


Finally, we are in position  to prove our main results.
\begin{proof}[Proof of Theorem \ref{Thm_1} and Theorem \ref{Thm_2}]
As we know the uniformly H\"older continuity of the type specified in Lemma \ref{TimeSpaceRegBdd} is the most important ingredient in  the proof   (\cite[Theorem 1.5]{HW2019})  of  the existence of weak solution to the nonlinear stochastic heat equation.  It is also the most important one to show the existence of weak solution for nonlinear stochastic wave equation     \eqref{eq.SWE}. Hence we omit the details of  the proof of Theorem \ref{Thm_1}. Since the pathwise uniqueness implies the existence of strong solution by the Yamada-Watanabe theorem (in the SPDEs setting, e.g. \cite{KS1998, Kurz2007}), we only need to focus on the proof of pathwise uniqueness. We follow the same strategy in \cite{HHLNT2017,HW2019} together with the crucial estimate \eqref{Est.Z2.N} in Proposition \ref{prop_est}.
	
	Suppose $u(t,x)$ and $v(t,x)$ are two solution to \eqref{eq.SWE}. Define the following stopping times:
	\begin{equation*}
    \begin{split}
        \fT_k:= \inf \Big\{ t\in[0,T]:&  \sup_{0\leq s\leq t,x\in\RR} \cN_{\frac 12-H}u(s,x)\geq k, \\
         &\quad~\text{or}~\sup_{0\leq s\leq t,x\in\RR} \cN_{\frac 12-H}v(s,x)\geq k \Big\}\,,\quad k=1, 2, \cdots
    \end{split}
  \end{equation*}
  Recall that the inequality \eqref{Est.Z2.N} in Proposition \ref{prop_est} implies that $\fT_k \uparrow T$ almost surely as $k\rightarrow\infty$. This is a key fact to our method.
  We need to find appropriate bounds for   the following two quantities:
  \[
   \fJ_1(t)=\sup_{x\in\RR}\EE\blk\1_{\{t<\fT_k\}}|u(t,x)-v(t,x)|^2\brk
  \]
  and
  \[
   \fJ_2(t)=\sup_{x\in\RR}\EE\lt[\int_{\RR}\1_{\{t<\fT_k\}}|u(t,x)-v(t,x)-u(t,x+h)+v(t,x+h)|^2|h|^{2H-2}dh\rt].
  \]
  By the elementary properties of It\^o's integral, we have
  \begin{align*}
  	\1_{\{t<\fT_k\}}&[u(t,x)-v(t,x)] \\
  	=&\1_{\{t<\fT_k\}}\int_0^t\int_\RR G_{t-s}(x-y)\1_{\{s<\fT_k\}} [\sigma(s,y,u(s,y))-\sigma(s,y,v(s,y))] W(ds,dy)\,.
  \end{align*}
  Therefore, denoting $\triangle (t,x,y):=\sigma(t,x,u(t,y))-\sigma(t,x,v(t,y))$ we have
  \begin{align}
  	&\EE\blk\1_{\{t<\fT_k\}}|u(t,x)-v(t,x)|^2\brk \nonumber\\
     \lesssim&\ \EE\bigg(\int_{0}^{t}\int_{\RR^2}\1_{\{s<T_k\}} |\fD_{h}G_{t-s}(x-y)|^2 [ \triangle(s,y,y)]^2|h|^{2H-2}dhdyds\bigg) \nonumber\\
     +&\ \EE\bigg(\int_{0}^{t}\int_{\RR^2}\1_{\{s<T_k\}} G^2_{t-s}(x-y-h)  [\triangle(s,y+h,y)-\triangle(s,y,y) ]^2|h|^{2H-2}dhdyds\bigg) \nonumber\\
     +&\ \EE\bigg(\int_{0}^{t}\int_{\RR^2}\1_{\{s<T_k\}}  G_{t-s}^2(x-y)  [\triangle(s,y,y+h)-\triangle(s,y,y) ]^2|h|^{2H-2}dhdyds\bigg)\nonumber \\
     =:&\, I_{1,1}+I_{1,2}+I_{1,3}\,.     \label{e.I1_unq}
  \end{align}
  The assumption \eqref{LipSigam} on  $\sigma$  can be used to estimate $I_{1,1}$.
  This is,
  \begin{align*}
    I_{1,1}\lesssim&\ \EE\bigg(\int_{0}^{t}\int_{\RR^2}\1_{\{s<T_k\}} |\fD_{h}G_{t-s}(x-y)|^2 |u(s,y)-v(s,y)|^2|h|^{2H-2}dhdyds\bigg)\\
    \lesssim& \int_{0}^{t}(t-s)^{2H} \sup_{y\in\RR}\EE\lk\1_{\{s<T_k\}}|u(s,y)-v(s,y)|^2\rk ds \\
    =& \int_{0}^{t}(t-s)^{2H} \fJ_1(s)ds\,.
  \end{align*}
  Using the property \eqref{DuSigam} of $\sigma$,   we
  have if $|h|>1$
  \begin{align*}
     [\triangle(s,y+h,y)-\triangle(s,y,y)]^2&
    =\lt|\int_{u}^{v} \lk \frac{\partial}{\partial \xi}\sigma(s,y+h,\xi)-\frac{\partial}{\partial \xi}\sigma(s,y,\xi)\rk d\xi\rt|^2 \\
    &\lesssim\, |u(s,y)-v(s,y)|^2\,.
  \end{align*}
If $|h|\leq 1$,  with the help of  additional property \eqref{DxuSigam} we get
  \begin{align*}
    [\triangle(s,y+h,y)-&\triangle(s,y,y)]^2 \\
   =&\lt|\int_{u}^{v} \lk\frac{\partial}{\partial \xi}\sigma(s,y+h,\xi)-\frac{\partial}{\partial \xi}\sigma(s,y,\xi)\rk d\xi\rt|^2 \\
   =&\lt|\int_{u}^{v}\int_{0}^{h} \frac{\partial^2}{\partial \eta\partial \xi}\sigma(s,y+\eta,\xi) d\eta d\xi\rt|^2 \\
   \lesssim&\, |h|^2|u(s,y)-v(s,y)|^2\,.
  \end{align*}
  Thus,  the term $I_{1,2}$ in \eqref{e.I1_unq} is bounded by
  \begin{align*}
   I_{1,2}\lesssim& \int_{0}^{t}\fJ_1(s)\lc\int_{\RR} G^2_{t-s}(x-y)dy\rc ds\lesssim\int_{0}^{t} (t-s) \fJ_1(s)ds\,.
  \end{align*}
  For the last term $I_{1,3}$ in \eqref{e.I1_unq},  by \eqref{DuSigam} and  \eqref{DuSigamAdd} we have
  \begin{align*}
    &\big|\triangle(s,y,y+h)-\triangle(s,y,y)\big|^2 \\
    =&\Big|\int_{0}^{1}[u(s,y+h)-v(s,y+h)]\frac{\partial}{\partial \xi}\sigma(s,y,\theta u(s,y+h)+(1-\theta) v(s,y+h))d\theta \\
    &\quad-\int_{0}^{1}[u(s,y)-v(s,y)]\frac{\partial}{\partial \xi}\sigma(s,y,\theta u(s,y)+(1-\theta) v(s,y))d\theta \Big|^2 \\
    \lesssim& |u(s,y+h)-v(s,y+h)-u(s,y)+v(s,y)|^2 \\
    +&|u(s,y)-v(s,y)|^2\cdot\blk|u(s,y+h)-u(s,y)|^2+|v(s,y+h)-v(s,y)|^2\brk\,.
  \end{align*}
  Thus, we can get
  \begin{align*}
  	I_{1,3}\, \ls\, k\int_{0}^{t} \blk \fJ_1(s)+\fJ_2(s)\brk ds\,.
  \end{align*}
   Summarizing  the above  estimates   we have
  \begin{equation*}
    \fJ_1(t)\leq k\cdot C_{T}\int_{0}^{t} \blk \fJ_1(s)+\fJ_2(s)\brk ds\,,
  \end{equation*}
where $C_T>0$ and the constant $k$  depends on the stopping times $\fT_k$.

A  similar procedure to the obtention of \eqref{Est.u_n+1_Z2} can be applied to estimate term $\fJ_2(t)$  to obtain
  \begin{equation*}
    \fJ_2(t)\lesssim k\int_{0}^{t} (t-s)^{4H-1}\blk \fJ_1(s)+\fJ_2(s)\brk ds\,.
  \end{equation*}
  As a consequence,
  \begin{equation*}
    \fJ_1(t)+\fJ_2(t)\lesssim k\int_{0}^{t} (t-s)^{4H-1}\blk \fJ_1(s)+\fJ_2(s)\brk ds.
  \end{equation*}
Now  Gronwall's lemma implies $\fJ_1(t)+\fJ_2(t)=0$ for all $t\in[0,T]$.
This means  we have
  \[
   \EE\blk\1_{\{t<\fT_k\}}|u(t,x)-v(t,x)|^2\brk=0\,.
  \]
  Thus,  we have $u(t,x)=v(t,x)$ almost surely on the set $\{t<\fT_k\}$ for all $k\geq 1$, and the fact $\fT_k \uparrow T$ a.s as $k$ tends to infinity necessarily indicate $u(t,x)=v(t,x)$ a.s. for every $(t,x)\in[0,T]\times\RR$.

  It is clear   that hypothesis \ref{H2} implies the hypothesis  \ref{H1}. So equation \eqref{eq.SWE} has a weak solution by Theorem \ref{Thm_1}. This combined with the above pathwise uniqueness  yields Theorem \ref{Thm_2}.

\end{proof}

\section{Necessity of $H>\frac 14$}\label{s.5}
In   Theorem  \ref{Thm_1} and Theorem \ref{Thm_2}, we see  that $H>\frac 14$ is a sufficient condition for the solvability of equation \eqref{eq.SWE}.  In this section we shall prove   that it is also necessary for some specific stochastic wave equations,  namely,  the hyperbolic Anderson equation \eqref{eq.HAE}.
It is known that  if $\|v(t,x)\|_{L^2(\Omega)}<\infty$ the solution admits
 the following unique Wiener chaos expansion (see \cite{Hu2017,Nualart2006}):
 \begin{align}
 	v(t,x)=& I_0(t,x)+\sum_{n=1}^{\infty} I_n(g_n(t,x))\,,
 \end{align}
where $I_n$ denotes the multiple It\^o-Wiener integrals and  $g_n(t,x)\ (n\geq1)$ are  defined by 
 \begin{equation}\label{def.g_n}
	g_n(\vec{s},\vec{x};t,x)=\frac{1}{n!} G_{t-s_{\sigma(n)}}(x-x_{\sigma(n)})\cdots G_{s_{\sigma(2)}-s_{\sigma(1)}}(x_{\sigma(2)}-x_{\sigma(1)}) I_0(s_{\sigma(1)},x_{\sigma(1)})\,,
 \end{equation}
 where $\vec{x}=(x_1,\dots,x_n)$ and $\vec{s}=(s_1,\dots,s_n)$ such that $0<s_{\sigma(1)}<s_{\sigma(2)}<\cdots<s_{\sigma(n)}<t$ for a permutation $\sigma$. Then to verify the existence and uniqueness of the mild solution $v(t,x)$ is equivalent to show that
\begin{align}\label{Sec_v(t,x)}
	\EE[|v(t,x)|^2]=\sum_{n=0}^{\infty} n!\|g_n(\cdot;t,x) \|^2_{\HH^{\otimes n}}<\infty\,,
\end{align}
where $\HH$ is defined by \eqref{eq.def_H}.  In terms
of Fourier transformation, we have
\begin{align*}
  \|g_n(\cdot;t,x) \|^2_{\HH^{\otimes n}}= \int_{[0,t]^n}\int_{\RR^n}\lt| \cF g_n(\vec{s},\cdot;t,x)(\vec{\xi}) \rt|^2 \mu(d\vec{\xi} \,)d\vec{s}\,,
\end{align*}
with $\mu(d\vec{\xi} \,)=\prod_{j=1}^n |\xi_j|^{1-2H}d\vec{\xi}$.

For national simplicity, we abbreviate $I_k(g_k(t,x))$  as $I_k(t,x)$  for $k=1,2$, i.e.
\begin{align*}
	I_1(t,x)=&\int_0^t \int_{\RR} G_{t-s}(x-y)I_0(s,y)W(ds,dy)\,,\\
	I_2(t,x)=&\int_0^t \int_{\RR} G_{t-s}(x-y)I_1(s,y)W(ds,dy)\,.
\end{align*}
Let us select some special initial conditions $u_0(x)=e^{-x^2}$ and $v_0(x)\equiv 0$ to proceed our argument. Then
\begin{align}\label{Cond.u0v0}
	I_0(t,x)=& \frac 12\int_{x-t}^{x+t}v_0(y)dy+\frac{1}{2}[u_0(x+t)+u_0(x-t)]\nonumber \\
	=&\frac{1}{2}\lk e^{-(x+t)^2}+e^{-(x-t)^2} \rk \,.
\end{align}
We do not consider the simple case $u_0(x)=1$ and $v_0(x)=0$. Because in this case, $I_0(t,x)$ is not in the space $\cZ^p(T)$ for any $p\geq 1$.
\begin{lem}\label{lem.Reg_x.sharp}
	 Suppose $I_0(t,x)$ are given in \eqref{Cond.u0v0}.
	Then for $H\in(0,1/2)$, there exist positive constants $c_{T,H}$ and $C_{T,H}$ such that for any $(t,x)\in [0,T]\times\RR$  and $h$ small enough satisfying $0<h<1\wedge \frac{t}{2}$,
	\begin{equation}\label{Reg_x.sharp}
		c_{t,H} \cdot |h|^{2H}\leq \EE[|\fD_h I_1(t,x)|^2]\leq C_{T,H} \cdot |h|^{2H}\,.
	\end{equation}
\end{lem}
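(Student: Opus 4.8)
The plan is to compute $\EE[|\fD_h I_1(t,x)|^2]$ via the isometry of Proposition \ref{Int_Gen} and then extract the exact order $|h|^{2H}$ by tracking the roughness of the wave kernel $G$ against the spectral measure $|\xi|^{1-2H}$. Writing $\fD_h I_1(t,x) = \int_0^t\int_\RR \fD_h^{(x)}\big[G_{t-s}(x-y)\big]\,I_0(s,y)\,W(ds,dy)$, the isometry gives
\begin{align*}
\EE[|\fD_h I_1(t,x)|^2] = c_H^2\int_0^t\int_{\RR^2}\big|\fD_z\big(\fD_h^{(x)}[G_{t-s}(x-y)]I_0(s,y)\big)\big|^2|z|^{2H-2}\,dy\,dz\,ds\,,
\end{align*}
where the inner difference $\fD_z$ is in the $y$-variable. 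Equivalently, via Plancherel in $y$ (as in \eqref{uepLp}) one has
\begin{align*}
\EE[|\fD_h I_1(t,x)|^2] = C_H\int_0^t\int_\RR \big|\cF\big(G_{t-s}(x-\cdot)I_0(s,\cdot)\big)(\xi)\big|^2\,|e^{-\iota h\xi}-1|^2\,|\xi|^{1-2H}\,d\xi\,ds\,.
\end{align*}
The factor $|e^{-\iota h\xi}-1|^2$ is the source of the $|h|^{2H}$ scaling: after a change of variables $\xi\mapsto\xi/h$ it formally produces $|h|^{2H}\int |\cdots|^2|\xi|^{1-2H}\,d\xi$, provided the remaining integral converges uniformly in $(t,x,s,h)$ with the stated restriction $0<h<1\wedge t/2$.

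For the \emph{upper bound}, I would use $|e^{-\iota h\xi}-1|^2 \le \min(h^2\xi^2, 4) \lesssim |h\xi|^{2H}$ (since $0<H<1/2$, interpolating the two trivial bounds), yielding
\begin{align*}
\EE[|\fD_h I_1(t,x)|^2] \lesssim |h|^{2H}\int_0^t\int_\RR \big|\cF\big(G_{t-s}(x-\cdot)I_0(s,\cdot)\big)(\xi)\big|^2|\xi|^{1-2H+2H}\,d\xi\,ds\,;
\end{align*}
but that over-counts, so more carefully I split the $\xi$-integral at $|\xi|\sim 1/h$ and on $\{|\xi|<1/h\}$ use $|e^{-\iota h\xi}-1|^2\le h^2\xi^2$, on $\{|\xi|\ge 1/h\}$ use $|e^{-\iota h\xi}-1|^2\le 4$. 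On each piece one needs $\int_\RR |\cF(G_{t-s}(x-\cdot)I_0(s,\cdot))(\xi)|^2|\xi|^{a}\,d\xi$ to be finite for the relevant exponents $a\in\{3-2H,\,1-2H\}$ and integrable in $s$ over $[0,t]$; this follows because $I_0$ in \eqref{Cond.u0v0} is a smooth, rapidly decaying (Gaussian) function, so $G_{t-s}(x-\cdot)I_0(s,\cdot)$ is a compactly supported $BV$ function whose Fourier transform decays like $|\xi|^{-1}$, together with an elementary estimate of the time integral. This delivers the constant $C_{T,H}$.

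For the \emph{lower bound} — which I expect to be the main obstacle — one cannot simply bound below an oscillatory integral, so I would restrict the $\xi$-integration to a favorable range and restrict the $s$-integration to a subinterval where $G_{t-s}(x-\cdot)I_0(s,\cdot)$ is genuinely nondegenerate. Concretely: since $h<t/2$, for $s\in[t/4,\,t/2]$ one has $t-s\ge t/2$, so $G_{t-s}(x-\cdot)=\tfrac12\1_{\{|x-\cdot|<t-s\}}$ is the indicator of an interval of length $\ge t$; multiplying by the Gaussian $I_0(s,\cdot)$ (which is bounded below on any fixed compact set) gives a function whose squared Fourier transform $|\cF(G_{t-s}(x-\cdot)I_0(s,\cdot))(\xi)|^2$ is bounded below by a positive constant $c_t>0$ uniformly for $|\xi|$ in a fixed band, say $|\xi|\in[1,2]$, and uniformly in $x$ and in $s\in[t/4,t/2]$ (this requires checking the Fourier transform does not vanish there — one uses that a Gaussian cutoff of an interval has strictly positive real part of its Fourier transform near the origin, or restricts to $|\xi|\le \delta$ for small $\delta$ depending only on $T$). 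Then
\begin{align*}
\EE[|\fD_h I_1(t,x)|^2] \gtrsim c_t\int_{t/4}^{t/2}\int_{\{1\le|\xi|\le 2\}}|e^{-\iota h\xi}-1|^2|\xi|^{1-2H}\,d\xi\,ds \gtrsim c_{t,H}\int_{\{1\le|\xi|\le 2\}}|h\xi|^{2}\,d\xi \gtrsim c_{t,H}'|h|^{2}\,,
\end{align*}
which only gives $|h|^2$, not $|h|^{2H}$ — so the band $|\xi|\sim 1$ is the wrong place to look. Instead I would keep the band $|\xi|\sim 1/h$: there the decay estimate $|\cF(G_{t-s}(x-\cdot)I_0(s,\cdot))(\xi)|^2\gtrsim |\xi|^{-2}$ from below is more delicate (Fourier transforms of $BV$ functions need not have a lower bound), so the cleanest route is to change variables $\xi = \zeta/h$ first, giving $|h|^{2H}\int_0^t\int_\RR |\cF(G_{t-s}(x-\cdot)I_0(s,\cdot))(\zeta/h)|^2|e^{-\iota\zeta}-1|^2|\zeta|^{1-2H}h^{-1}\,d\zeta\,ds$, and then observe that $h^{-1}|\cF(G_{t-s}(x-\cdot)I_0(s,\cdot))(\zeta/h)|^2 \to |\zeta|^{-2}\cdot(\text{jump}^2)$ as $h\to0$ by the stationary-phase/boundary-term asymptotics for the indicator cutoff, with a uniform lower bound for $h$ small; integrating against $|e^{-\iota\zeta}-1|^2|\zeta|^{1-2H}$ over $\zeta\in\RR$ converges precisely because $1-2H-2 = -1-2H < -1$ at infinity and $|e^{-\iota\zeta}-1|^2|\zeta|^{1-2H}\sim|\zeta|^{3-2H}$ is integrable at $0$. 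The technical heart is therefore the uniform (in $t,x,s,h$) two-sided control of the rescaled Fourier transform of $G_{t-s}(x-\cdot)I_0(s,\cdot)$, which I would establish by writing $G_{t-s}(x-\cdot)I_0(s,\cdot)$ explicitly as a truncated Gaussian and computing (or estimating via integration by parts) its Fourier transform directly. Once both bounds are in hand, combining them with the constants tracked through the computation yields \eqref{Reg_x.sharp}.
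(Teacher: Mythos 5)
Your Fourier route has a genuine gap in two places, both on the lower-bound side. First, the identity
\begin{equation*}
\EE[|\fD_h I_1(t,x)|^2] = C_H\int_0^t\int_\RR \big|\cF\big(G_{t-s}(x-\cdot)I_0(s,\cdot)\big)(\xi)\big|^2\,|e^{-\iota h\xi}-1|^2\,|\xi|^{1-2H}\,d\xi\,ds
\end{equation*}
is false as stated: the increment $\fD_h$ acts only on the $x$-argument of $G$, not on $I_0(s,\cdot)$, so the multiplier $e^{-\iota h\xi}-1$ does not factor out of the Fourier transform of the product. One has $G_{t-s}(x+h-y)I_0(s,y)-G_{t-s}(x-y)I_0(s,y)=\fD_{-h}^{(y)}\big[G_{t-s}(x-\cdot)I_0(s,\cdot)\big](y)+G_{t-s}(x+h-y)\fD_h^{(y)}I_0(s,y-h)$, and the correction term must be shown to contribute only $O(h^2)=o(h^{2H})$ before your identity can be used even approximately; you never perform this repair. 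Second, and more seriously, the uniform lower bound you need on $|\cF(G_{t-s}(x-\cdot)I_0(s,\cdot))(\zeta/h)|^2$ in the band $|\xi|\sim 1/h$ is not merely "delicate" but false pointwise: the boundary-term asymptotics give a modulus $\sim \frac{h}{|\zeta|}\,\big|I_0(s,x-(t-s))e^{2\iota(t-s)\zeta/h}-I_0(s,x+(t-s))\big|$, which oscillates rapidly in $s$ and vanishes on a nontrivial set whenever the two boundary values have equal modulus (e.g.\ $x=0$, by the symmetry of $I_0$ in \eqref{Cond.u0v0}). Rescuing the bound requires an averaging argument in $s$ over each oscillation period, which you flag as the "technical heart" but do not supply. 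There is also a bookkeeping slip in the change of variables ($|\xi|^{1-2H}d\xi$ produces $h^{2H-2}d\zeta$, not the $h^{2H}\cdot h^{-1}$ you wrote), though that is cosmetic by comparison.

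For contrast, the paper avoids oscillation entirely by staying in physical space for the lower bound: it applies $(a+b)^2\ge\tfrac34 a^2-3b^2$ to split off the cross term $\fD_h G_s\cdot\fD_l I_0$ (which is $O(h)=o(h^{2H})$), and then uses the exact identity $|\Box_{l,h}G_s(y)|^2=\1_{\{y+l<s<y+l+h\}}+\1_{\{y<s<y+h\}}$ on $\{y>0,\,l\ge h\}$ together with the lower bound $I_0\ge c_T$ on compacts to integrate directly and obtain $c_{t,H}h^{2H}$. Its upper bound does use Plancherel, but only on the pure kernel increments $\Box_{l,h}G_s$ and $\fD_h G_s$ after bounding $|I_0|$ and $|\fD_l I_0|$ separately — which is exactly the product-rule separation your proposal skips. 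Your upper-bound sketch (splitting at $|\xi|\sim 1/h$ with the $BV$ decay $|\cF|\lesssim|\xi|^{-1}$) would go through once that separation is made, but the lower bound needs to be redone along the paper's lines or supplemented with the missing averaging lemma.
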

%
\begin{proof}
	At first, from \eqref{Cond.u0v0}  we see easily  that
	\begin{equation}\label{Est.u0v0_u}
		|I_0(t,x)|\leq C_T\,,\quad \lt|\fD_l I_0(t,x) \rt| \leq C_T \cdot |l|\wedge 1\,.
	\end{equation}
	Moreover, on the set $(t,x)\in[0,T]\times[-T,T]$, we have a lower bound for  $|I_0(t,x)|$:
	\begin{equation}\label{Est.u0v0_l}
		 I_0(t,x) =\frac{1}{2}\lk e^{-(x+t)^2}+e^{-(x-t)^2} \rk\geq   c_T\,.
	\end{equation}

	Now we are in a position to estimate $\EE[|\fD_h I_1(t,x)|^2]$. Let us consider the lower bound first. Recall an elementary inequality: $(a+b)^2\geq \frac 34 a^2-3b^2$, then
	\begin{align*}
		\EE[|\fD_h I_1(t,x)|^2]
		&=\EE\lk\lt| \int_0^t\int_{\RR}\fD_h G_{t-s}(x-y) \cdot I_0(s,y)W(ds,dy) \rt|^2 \rk \\
		&=\int_0^t \int_{\RR^2}\Big|\fD_h G_{t-s}(x-(y+l)) \cdot I_0(s,y+l) \\
		&\qquad\quad-\fD_h G_{t-s}(x-y) \cdot I_0(s,y) \Big|^2 \cdot |l|^{2H-2}dldyds \\
		&\geq \frac 34 \int_0^t \int_{\RR^2} \lt|\Box_{l,h} G_{s}(y)\cdot I_0(s,y)   \rt|^2 \cdot |l|^{2H-2}dldyds \\
		&\qquad\quad-3\int_0^t \int_{\RR^2} \lt|\fD_h G_s(x-y) \rt|^2\cdot \lt|\fD_l I_0(s,y) \rt|^2 \cdot |l|^{2H-2}dldyds  \,.
	\end{align*}
	By H\"older's  inequality and \eqref{Est.u0v0_u}, we  see that $\sup\limits_{s\in[0,T],y\in\RR} \int_{\RR}\lt|\fD_l I_0(s,y) \rt|^2 \cdot |l|^{2H-2}dl\leq C_{T,H}<\infty$ for $H\in(0,\frac 12)$. Then
	\begin{align*}
		\int_0^t \int_{\RR^2} \lt|\fD_h G_s(x-y) \rt|^2\cdot &  \lt|\fD_l I_0(s,y) \rt|^2 \cdot |l|^{2H-2}dldyds \\
		 \,\ls & \, \int_0^t  \int_{\RR}\lt|\fD_h G_s(y) \rt|^2 dy  ds  \leq C_T\cdot |h|\,.
	\end{align*}
	Moreover, we have
	\begin{align*}
		\int_0^t \int_{\RR^2}& \lt|\Box_{l,h} G_{s}(y)\cdot I_0(s,y)   \rt|^2 \cdot |l|^{2H-2}dldyds \\
		&\geq \int_0^t \int_{y>0} \int_{l\geq h} \lt|\Box_{l,h} G_{s}(y)I_0(s,y)   \rt|^2 \cdot |l|^{2H-2}dldyds\,.
	\end{align*}
	Notice that on the set $\{y>0\}\times\{l\geq h\}$
	\begin{align*}
		|\Box_{l,h} G_{s}(y)|^2 \simeq&\  |\1_{\{|y+l+h|<s\}}-\1_{\{|y+l|<s\}}-\1_{\{|y+h|<s\}}+\1_{\{|y|<s\}}|^2 \\
		\simeq&\ | \1_{\{y+l<|s|<y+l+h\}}-\1_{\{y<|s|<y+h\}} |^2\\
		=&\1_{\{y+l<|s|<y+l+h\}}+\1_{\{y<|s|<y+h\}}\,.
	\end{align*}
	Letting $h<1\wedge \frac{t}{2}$ be  small enough and noticing the lower bound \eqref{Est.u0v0_l}, we have
	\begin{align*}
		\int_0^t \int_{\RR^2}& \lt|\Box_{l,h} G_{s}(y)I_0(s,y)   \rt|^2 \cdot |l|^{2H-2}dldyds \\
		&\gtrsim \int_{\frac t2}^t \int_{s-h}^s \int_{l\geq h}  |l|^{2H-2}|I_0(s,y)|^2 dldyds\\
		&\gtrsim \int_{\frac t2}^t \int_{s-h}^s   |h|^{2H-1} |I_0(s,y)|^2 dyds\gtrsim c_{t,H} \cdot h^{2H}\,.
	\end{align*}
	Thus, we obtain when $H\in(0,1/2)$ and $|h|$ is relatively small
	\begin{align*}
		\EE[|\fD_h I_1(t,x)|^2] \gtrsim c_{t,H} \cdot h^{2H}-C_T\cdot |h| \gtrsim c_{t,H} \cdot h^{2H}.
	\end{align*}

The upper bound can be derived by the Fourier transformation.	 By \eqref{Est.u0v0_u}, we have
\begin{align*}
	\EE[|\fD_h I_1(t,x)|^2]
		&\leq 2 \int_0^t \int_{\RR^2} \lt|\Box_{l,h} G_{s}(y)I_0(s,y)   \rt|^2 \cdot |l|^{2H-2}dldyds \\
		&\qquad+2\int_0^t \int_{\RR^2} \lt|\fD_h G_s(x-y) \rt|^2\cdot \lt|\fD_l I_0(s,y) \rt|^2 \cdot |l|^{2H-2}dldyds \\
    &\ls \int_0^t\int_{\RR^2}\lt|\Box_{l,h} G_{s}(y)\rt|^2  \cdot |l|^{2H-2}dldyds + \int_0^t \int_{\RR} \lt|\fD_h G_s(y) \rt|^2 dyds \\
	&\ls \int_0^t\int_{\RR} |e^{\iota h\xi}-1|^2 \lc\frac{\sin(s|\xi|)}{\xi} \rc^2 |\xi|^{1-2H}d\xi ds+ |h| \\
	&\ls \int_0^t\int_{\RR} [1-\cos(h|\xi|)]\frac{s^2}{1+s^2|\xi|^2}|\xi|^{1-2H}d\xi ds+ |h|  \\
	&= C_{T,H}\cdot (|h|^{2H} + |h|) \ls C_{T,H}\cdot |h|^{2H} \,,
\end{align*}
for $H\in(0,1/2)$ and $|h|$ is sufficiently  small. Therefore, we finish the proof.
\end{proof}

Now we begin to prove Theorem \ref{Thm_3}.
\begin{proof}[Proof of Theorem \ref{Thm_3}]
We only need to consider $\|I_2(t,x)\|_{L^2(\Omega)}^2$ with  some special initial data  \eqref{Cond.u0v0}.  Let us denote
$$F_{t,x}(s,y):=G_{t-s}(x-y)I_1(s,y)\,. $$
Noting that
\begin{align*}
	|\fD_h F_{t,x}(s,y)|^2=&\ |G_{t-s}(x-y)\fD_h I_1(s,y)+\fD_h G_{t-s}(x-y)I_1(s,y)|^2 \\
	\geq &\ \frac{3}{4}|G_{t-s}(x-y)\fD_h I_1(s,y)|^2-3|\fD_h G_{t-s}(x-y)I_1(s,y)|^2\,,
\end{align*}
so we have
\begin{align}
	\EE\lk|I_2(t,x)|^2 \rk=&\ \EE \int_{0}^{t} \int_{\RR^2} |\fD_h F_{t,x}(s,y)|^2 |h|^{2H-2}dhdyds \nonumber \\
	\geq&\ \frac{3}{4}\int_{0}^{t} \int_{\RR^2} |G_{t-s}(x-y)|^2 \EE|\fD_h I_1(s,y)|^2 |h|^{2H-2}dhdyds \label{eq.I2_1} \\
	&-3\int_{0}^{t} \int_{\RR^2} |\fD_h G_{t-s}(x-y)|^2 \EE|I_1(s,y)|^2 |h|^{2H-2}dhdyds \label{eq.I2_2}\,.
\end{align}
 Without loss of generality, we assume $t=2$  and estimate term \eqref{eq.I2_1} first. By Lemma \ref{lem.Reg_x.sharp} with $h<1\wedge \frac t2=1$, it is clear that when $H\leq \frac 14$,
\begin{align}\label{est_first}
\int_{0}^{t} \int_{\RR^2}& |G_{t-s}(x-y)|^2 \EE|\fD_h I_1(s,y)|^2 |h|^{2H-2}dhdyds\nonumber\\
	\gtrsim& \int_{1}^{2} \int_{\RR}\int_{|h|<1} |G_{2-s}(x-y)|^2 \EE|\fD_h I_1(s,y)|^2 |h|^{2H-2}dhdyds \nonumber\\
	\gtrsim& \int_{1}^{2} \int_{\RR}\int_{|h|<1} |G_{2-s}(x-y)|^2 \cdot|h|^{4H-2}dhdyds=\infty.
\end{align}
For any $H\in(0,1/2)$ we can get  that  $\sup_{y\in\RR}\EE|I_1(s,y)|^2\ls s^{2H}+s^2$ in the term \eqref{eq.I2_2}, thus
\begin{align}\label{est_second}
	\int_{0}^{2}& \int_{\RR^2} |\fD_h G_{2-s}(x-y)|^2 \EE|I_1(s,y)|^2 |h|^{2H-2}dhdyds \nonumber\\
	&\ls \int_{0}^{2} (s^{2H}+s^2) \int_{\RR}\lt(\frac{\sin((2-s)|\xi|)}{|\xi|} \rt)^2 |\xi|^{1-2H}d\xi ds\nonumber \\
	&= \int_{0}^{2} (s^{2H}+s^2)(2-s)^{2H} ds <\infty.
\end{align}
 Plugging \eqref{est_first} and \eqref{est_second} into  \eqref{eq.I2_1} and\eqref{eq.I2_2}, we obtain that for $t=2$
\begin{equation*}
	\EE\lk|I_2(t,x)|^2\rk=\infty
\end{equation*}
when $H\leq \frac 14$. The proof is complete.
\end{proof}

\appendix
\section{Some technical lemmas for  wave kernel}\label{s.6}
In this Appendix, we show some technical  lemmas used several times in our work. Let us start by proving the Fourier transform of $\cE(t,x)$, $\cS_{\alpha}(t,x)$ and $\cC_{1-\alpha}(t,x)$.

\begin{lem}\label{Lem_A.1}
  Let $\cE(t,x)$, $\cS_{\alpha}(t,x)$ and $\cC_{1-\alpha}(t,x)$ be defined by  \eqref{eq.cC_alpha}. Then  they are all in $L^1(\RR)$, and their Fourier transforms  are given  by  \eqref{eq.Fourier}. Consequently, the wave kernel $G_{t-s}(x-y)$ can be expressed as the representation $\eqref{eq.SumKer}$.
\end{lem}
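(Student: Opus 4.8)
The plan is to obtain all three Fourier transforms by direct computation, using as the single analytic input the classical Gamma integral
\[
\int_0^\infty \xi^{s-1}e^{-z\xi}\,d\xi=\Gamma(s)\,z^{-s}\,,\qquad \mathrm{Re}\,z>0,\ 0<s<1\,,
\]
together with its boundary values on the imaginary axis $z=-\iota a$ ($a\in\RR\setminus\{0\}$), which follow either by rotating the contour onto the ray $\arg\xi=\sgn(a)\pi/2$ or by inserting an Abel factor $e^{-\de\xi}$ and letting $\de\downarrow0$:
\[
\int_0^\infty \xi^{s-1}e^{\iota a\xi}\,d\xi=\Gamma(s)\,|a|^{-s}\,e^{\iota\,\sgn(a)\,s\pi/2}\,,\qquad \int_0^\infty \xi^{s-1}e^{-t\xi+\iota x\xi}\,d\xi=\Gamma(s)\,(t-\iota x)^{-s}\,.
\]

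For $\cE$: since $\int_\RR\cE(t,x)\,dx=1$ we have $\cE(t,\cdot)\in L^1(\RR)$, and $\widehat{\cE}(t,\xi)=e^{-t|\xi|}$ is the classical transform of the Poisson kernel, obtained from $\frac1{2\pi}\int_\RR e^{\iota x\xi}e^{-t|\xi|}\,d\xi=\frac1\pi\mathrm{Re}\,(t-\iota x)^{-1}=\cE(t,x)$. For $\cS_\al$ and $\cC_{1-\al}$ it suffices to check that the inverse Fourier transform of $\sin(t|\xi|)|\xi|^{-\al}$ equals $\cS_\al(t,x)$ and that of $\big(\cos(t|\xi|)-e^{-t|\xi|}\big)|\xi|^{\al-1}$ equals $\cC_{1-\al}(t,x)$; once this and the $L^1$ bound below are in hand, \eqref{eq.Fourier} follows and \eqref{eq.SumKer} is immediate from the convolution-to-product identity already used in the proof of Lemma \ref{kernel-sum}. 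All integrands being even in $\xi$, and $\cS_\al,\cC_{1-\al}$ even in $x$, one may take $x\ge0$ and write $\frac1{2\pi}\int_\RR e^{\iota x\xi}(\cdots)\,d\xi=\frac1\pi\mathrm{Re}\int_0^\infty e^{\iota x\xi}(\cdots)\,d\xi$. Expanding $\sin$ and $\cos$ through exponentials converts these half-line integrals into linear combinations of $\int_0^\infty\xi^{s-1}e^{\iota a\xi}\,d\xi$ with $a\in\{x+t,\,x-t\}$ (with $s=1-\al$ for $\cS_\al$, $s=\al$ for $\cC_{1-\al}$) and of $\Gamma(\al)(t-\iota x)^{-\al}$. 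Substituting the boundary formulas, collecting terms, writing $(t-\iota x)^{-\al}=(t^2+x^2)^{-\al/2}e^{\iota\al\tan^{-1}(x/t)}$, and taking real parts reproduces exactly the coefficients $\tfrac{\Gamma(1-\al)}{2\pi}\cos(\tfrac{\al\pi}2)$, $\tfrac{\Gamma(\al)}{2\pi}\cos(\tfrac{\al\pi}2)$ and $\tfrac{\Gamma(\al)}{\pi}\cos\!\big(\al\tan^{-1}(|x|/t)\big)$ appearing in \eqref{eq.cC_alpha}, the factor $\sgn(t-|x|)$ in $\cS_\al$ arising from the phase $e^{\iota\,\sgn(x-t)\,s\pi/2}$.

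Finally, for the $L^1$-claims: near the light cone $|x|=t$ the only singularities of $\cS_\al(t,\cdot)$ and $\cC_{1-\al}(t,\cdot)$ are of order $\big|t-|x|\big|^{\al-1}$ and $\big|t-|x|\big|^{-\al}$, which are locally integrable because $0<\al<1$; as $|x|\to\infty$ the a priori non-integrable leading terms cancel, $(|x|+t)^{\al-1}-(|x|-t)^{\al-1}=O(|x|^{\al-2})$ and the bracket in $\cC_{1-\al}$ is $O\big(t(|x|^2-t^2)^{-\al/2-1}\big)$ — the decay already recorded in the main text below \eqref{C_alpha} — and both are integrable at infinity, so $\cS_\al(t,\cdot),\cC_{1-\al}(t,\cdot)\in L^1(\RR)$. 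The main effort is Step 2: keeping track of the branch of $\xi^{s-1}$ under the contour rotation and matching the resulting $e^{\pm\iota s\pi/2}$ phases against the real trigonometric coefficients in the statement; the cancellation-at-infinity estimate in the last step is the secondary technical point.
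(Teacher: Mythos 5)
Your proposal is correct and follows essentially the same route as the paper: compute the inverse Fourier transforms of $\cE$, $\cS_\alpha$, $\cC_{1-\alpha}$ by reducing to half-line integrals of $\xi^{s-1}$ against exponentials, match the resulting phases to the trigonometric coefficients in \eqref{eq.cC_alpha}, and verify $L^1$ via the integrable $|t-|x||^{\alpha-1}$, $|t-|x||^{-\alpha}$ singularities at the light cone together with the cancellation of leading terms at infinity. The only difference is cosmetic — you derive the oscillatory Gamma integrals from $\int_0^\infty\xi^{s-1}e^{-z\xi}\,d\xi=\Gamma(s)z^{-s}$ by contour rotation, whereas the paper cites the tabulated formulas 17.33(2) and 17.34(14) of \cite{GR2014}.
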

\begin{proof}
  We treat $\cE(t,x)$ at first,
\begin{align}\label{F.cE}
  \cE(t,x)&=\cF^{-1}[\hat{\cE}(t,\cdot)](x)=\frac{1}{2\pi}\int_{\RR}e^{-t|\xi|}e^{\iota x\xi} d\xi=\frac{1}{\pi}\frac{t}{t^2+x^2}\,,
\end{align}
which is obviously in $L^1(\RR)$. Similarly, for $\cS_{\alpha}(t,x)$,
\begin{align}\label{F.cS}
  \cS_{\alpha}(t,x)&=\cF^{-1}[\hat{\cS}_{\alpha}(t,\cdot)](x)=\frac{1}{2\pi}\int_{\RR}\frac{\sin(t|\xi|)}{|\xi|^{\alpha}}e^{\iota x\xi} d\xi=\frac{1}{\pi}\int_{0}^{\infty}\frac{\sin(t\xi)}{\xi^{\alpha}}\cos(|x|\xi) d\xi \nonumber\\
    &=\frac{1}{2\pi}\int_{0}^{\infty}\frac{\sin\blc(t+|x|)\xi\brc}{\xi^{\alpha}}d\xi+\frac{1}{2\pi}\int_{0}^{\infty}\frac{\sin\blc(t-|x|)\xi\brc}{\xi^{\alpha}}d\xi \nonumber\\
    &=\frac{\Gamma(1-\alpha)}{2\pi}\cos\lc\frac{\alpha\pi}{2}\rc \lk \blc t+|x|\brc^{\alpha-1}+\sgn(t-|x|)\blc t-|x|\brc^{\alpha-1} \rk\,,
\end{align}
where the last equality can be found in 17.33(2) in \cite{GR2014}. For fixed $t>0$, if $|x|$ is close to $t$, $|\cS_{\alpha}(t,x)|$ can be bounded by
\[
 \blc t+|x|\brc^{\alpha-1}+\big | t-|x|\big |^{\alpha-1}\,.
\]
And when $|x|$ is large enough, $|\cS_{\alpha}(t,x)|$ behaves   like
\[\big( |x|-t\big)^{\alpha-1}-\blc |x|+t\brc^{\alpha-1},\]
which can be bounded by $t\big( |x|-t\big)^{\alpha-2}$.   Therefore, $\cS_{\alpha}(t,x)$ is in $L^1(\RR)$ since $\alpha\in(0,1)$.

The last one $\cC_{1-\alpha}(t,x)$ is more involved because of the term $\cF^{-1}\lk\frac{e^{-t|\xi|}}{|\xi|^{1-\alpha}}\rk$. But we can apply the formula 17.34(14) in \cite{GR2014} to get
\begin{align}\label{F.cC}
  \cC_{1-\alpha}(t,x)&=\cF^{-1}[\hat{\cC}_{1-\alpha}(t,\cdot)](x)
  \nonumber\\
    &=\frac{1}{\pi}\int_{0}^{\infty}\frac{\cos(t\xi)}{\xi^{1-\alpha}}\cos(|x|\xi) d\xi-\frac{1}{\pi}\int_{0}^{\infty}\frac{e^{-t|\xi|}}{\xi^{1-\alpha}}\cos(|x|\xi) d\xi \nonumber\\
    &=\frac{\Gamma(\alpha)}{2\pi}\bigg[\cos\lc\frac{\alpha\pi}{2}\rc \lk\big|t+|x|\big|^{-\alpha}+\big|t-|x|\big|^{-\alpha}\rk \nonumber\\
    &\qquad\qquad\qquad-2\cos\lc\alpha\tan^{-1}\lc\frac{|x|}{t}\rc\rc \blk t^2+x^2\brk^{-\frac{\alpha}{2}}\bigg].\
\end{align}
Similarly, when $|x|$ is close to $t$, $|\cC_{1-\alpha}(t,x)|$ can be bounded by
\[
 \big|t+|x|\big|^{-\alpha}+\big|t-|x|\big|^{-\alpha}+\blk t^2+x^2\brk^{-\frac{\alpha}{2}}\,.
\]
It is more interesting to know the above asymptotics when    $|x|$ is large. Since
\begin{align}\label{A.cC_alpha}
\cC_{1-\alpha}(t,x)\simeq& \cos\lc\frac{\alpha\pi}{2}\rc
    \lk|t+|x||^{-\alpha}+|t-|x||^{-\alpha} \rk-2\cos\lc\frac{\alpha\pi}{2}\rc\lk t^2+x^2\rk^{-\frac{\alpha}{2}}\nonumber\\
    &+2 \lk \cos\lc\frac{\alpha\pi}{2}\rc- \cos\lc\alpha\tan^{-1}\lc\frac{|x|}{t}\rc\rc\rk (t^2+x^2)^{-\frac{\alpha}{2}},\
\end{align}
setting $y_0=\frac{|x|}{t}$, then for $|x|$ large enough,
\begin{align}\label{appenII}
 \cos\lc\frac{\alpha\pi}{2}\rc- \cos \lc\alpha\tan^{-1}\lc\frac{|x|}{t}\rc\rc=&\int_{y_0}^{+\infty}\frac{d}{d\omega} \lt[ \cos\lc\alpha\tan^{-1}\lc\omega\rc\rc\rt] d\omega\nonumber\\
 \le &\alpha\int_{y_0}^{+\infty}\frac{ 1}{\omega^2}d\omega\simeq  C_{\alpha}\cdot t|x|^{-1}.
\end{align}
Therefore,
\begin{equation}\label{appendixII}
\lk 2\cos\lc\frac{\alpha\pi}{2}\rc-2\cos\lc\alpha\tan^{-1}\lc\frac{|x|}{t}\rc\rc\rk (t^2+x^2)^{-\frac{\alpha}{2}}\simeq C_{\alpha}\cdot t|x|^{-1}(t^2+x^2)^{-\frac{\alpha}{2}},
\end{equation}
which is integrable with respect to  $x$ when $|x|$ is large enough since $\alpha\in(0,1).$ \ Moreover, since the following important asymptotic behavior holds, which will be explained in Remark \ref{explation},
\begin{align}\label{eq.Cos[arctan]}
	\big|t+|x|\big|^{-\alpha}&+\big|t-|x|\big|^{-\alpha} =\,2\blc |x|^2-t^2\brc^{-\frac{\alpha}{2}} \cos\lk\alpha \tan^{-1}\lc\frac{\iota t}{|x|} \rc\rk\sim 2\blc |x|^2-t^2\brc^{-\frac{\alpha}{2}}\,,
\end{align}
it is clear that
\[
 t\blc |x|^2+t^2\brc^{-\frac{\alpha}{2}-1}\ls\, \blc |x|^2-t^2\brc^{-\frac{\alpha}{2}}-\blc |x|^2+t^2\brc^{-\frac{\alpha}{2}}\ls\, t\blc |x|^2-t^2\brc^{-\frac{\alpha}{2}-1}\,.
\]
We see that for $\alpha\in(0,1)$, $\cC_{\alpha}(t,x)$ is also integrable with respect to $x$ when $|x|$ sufficiently  large. As a result, $\cC_{\alpha}(t,x)$ is in $L^1(\RR)$.

Combining \eqref{F.cE}, \eqref{F.cS} and \eqref{F.cC}, we can conclude  \eqref{eq.SumKer}.
\end{proof}

\begin{rmk}\label{explation}
We provide  details of the equation \eqref{eq.Cos[arctan]} we used  in the above proof of Lemma \ref{Lem_A.1}. Noticing that
\begin{equation*}
\arctan(z)=-\frac{\iota}{2}\ln\left(\frac{\iota- z}{\iota+ z}\right)=-\frac{\iota}{2}\ln\left(\frac{1+\iota z}{1-\iota z} \right)\,,
\end{equation*}
  we have
\begin{eqnarray*}
\cos[\alpha\tan^{-1}(z)]&=&\frac12\left\{\exp\lt[\iota \alpha\tan^{-1}(z)\rt]+\exp\lt[-\iota \alpha\tan^{-1}(z)\rt]\right\}\\
&=&\frac12\left\{\exp\lt[\iota \alpha \cdot (-\frac{\iota}{2})\ln\left(\frac{1+\iota z}{1-\iota z}\right)\rt]+\exp\lt[-\iota \alpha\cdot(-\frac{\iota}{2})\ln\left(\frac{1+\iota z}{1-\iota z}\right)\rt]\right\}\\
&=&\frac12\left\{\exp\lt[\frac{\alpha}{2}\ln\left(\frac{1+\iota z}{1-\iota z}\right)\rt]+\exp\lt[-\frac{\alpha}{2}\ln\left(\frac{1+\iota z}{1-\iota z}\right)\rt]\right\}\\
&=&\frac12\left\{ \left(\frac{1+\iota z}{1-\iota z}\right)^{\frac{\alpha}{2}}+\left(\frac{1-\iota z}{1+\iota z}\right)^{\frac{\alpha}{2}} \right\}\\
&=&\frac12\left\{(1+z^2)^{\frac{\alpha}{2}}\lt[(1-\iota z)^{-\alpha}+(1+\iota z)^{-\alpha}\rt]\right\}.\\
\end{eqnarray*}
Letting $\ z=\frac{\iota t}{|x|}$, we see the equation \eqref{eq.Cos[arctan]} holds.
\end{rmk}

\begin{lem}\label{Lem_A.2}
If $\frac12< \alpha<  1 $, then $\hat{\cC}_{ \alpha}(t,\xi):=\frac{\cos(t|\xi|)-e^{-t|\xi|}}{|\xi|^{ \alpha}}$   and $\hat{\cS}_{\alpha}(t,\xi):=\frac{\sin(t|\xi|)}{|\xi|^{\alpha}}$  are  in $L^2(\RR)$ for any  $t>0$.
Hence, $\cC_{ \alpha}(t,x)$
and $\cS_{\alpha}(t,x)$ are also in $L^2(\RR)$.
\end{lem}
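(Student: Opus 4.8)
The plan is to verify square-integrability of $\hat{\cC}_{\alpha}(t,\cdot)$ and $\hat{\cS}_{\alpha}(t,\cdot)$ by hand, splitting the frequency line into the bounded piece $\{|\xi|\le 1\}$ and the tail $\{|\xi|>1\}$, and then to transfer this to $\cC_{\alpha}(t,\cdot)$ and $\cS_{\alpha}(t,\cdot)$ via Plancherel's theorem combined with the $L^{1}$-membership already recorded in Lemma \ref{Lem_A.1}.

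First I would treat $\hat{\cS}_{\alpha}$. On $\{|\xi|\le 1\}$ the elementary bound $|\sin(t|\xi|)|\le t|\xi|$ gives $|\hat{\cS}_{\alpha}(t,\xi)|^{2}\le t^{2}|\xi|^{2-2\alpha}$, which is integrable near the origin because $2-2\alpha>-1$ for $\alpha\in(1/2,1)$. On $\{|\xi|>1\}$ the bound $|\sin(t|\xi|)|\le 1$ gives $|\hat{\cS}_{\alpha}(t,\xi)|^{2}\le |\xi|^{-2\alpha}$, whose integral over $\{|\xi|>1\}$ is finite precisely because $2\alpha>1$; this is the one place where the hypothesis $\alpha>1/2$ is essential. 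For $\hat{\cC}_{\alpha}$ the argument is the same, with a short Taylor estimate on the bounded piece: from $|1-\cos(t|\xi|)|\le \tfrac12 t^{2}\xi^{2}$ and $|1-e^{-t|\xi|}|\le t|\xi|$ one gets $|\cos(t|\xi|)-e^{-t|\xi|}|\le(\tfrac12 t^{2}+t)|\xi|$ on $\{|\xi|\le 1\}$, hence $|\hat{\cC}_{\alpha}(t,\xi)|^{2}\le C_{t}|\xi|^{2-2\alpha}$, which is integrable there (in particular there is no singularity at $\xi=0$, since the numerator vanishes to first order in $|\xi|$); on $\{|\xi|>1\}$ one uses $|\cos(t|\xi|)-e^{-t|\xi|}|\le 1+e^{-t}\le 2$ to obtain $|\hat{\cC}_{\alpha}(t,\xi)|^{2}\le 4|\xi|^{-2\alpha}$, again integrable by $2\alpha>1$. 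Adding the two contributions shows $\hat{\cS}_{\alpha}(t,\cdot),\hat{\cC}_{\alpha}(t,\cdot)\in L^{2}(\RR)$.

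Finally, to conclude $\cS_{\alpha}(t,\cdot),\cC_{\alpha}(t,\cdot)\in L^{2}(\RR)$, I would invoke Lemma \ref{Lem_A.1}, which gives $\cS_{\alpha}(t,\cdot)\in L^{1}(\RR)$ with Fourier transform $\hat{\cS}_{\alpha}(t,\cdot)$, and likewise for $\cC_{\alpha}(t,\cdot)$ (the case $\alpha\in(1/2,1)$ being the case $\cC_{1-\alpha'}$ with $\alpha'=1-\alpha\in(0,1/2)\subset(0,1)$ in the notation of that lemma). A function that lies in $L^{1}(\RR)$ and whose Fourier transform lies in $L^{2}(\RR)$ must itself lie in $L^{2}(\RR)$: by Plancherel there is $g\in L^{2}(\RR)$ with $\hat{g}=\hat{\cS}_{\alpha}(t,\cdot)$, and since the Fourier transform is injective on $L^{1}(\RR)+L^{2}(\RR)$ (e.g. viewing everything as tempered distributions) one gets $g=\cS_{\alpha}(t,\cdot)$ almost everywhere, and similarly for $\cC_{\alpha}$. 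This completes the proof.

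I do not expect a genuine obstacle here: every frequency-side estimate is elementary, and the only point that needs a little care is the last step, where one should combine Plancherel with the $L^{1}$-bound of Lemma \ref{Lem_A.1} (and injectivity of the Fourier transform) rather than applying the $L^{2}$ Plancherel isometry directly, since a priori we only know $\cS_{\alpha}(t,\cdot)$ and $\cC_{\alpha}(t,\cdot)$ are in $L^{1}$, not yet in $L^{2}$.
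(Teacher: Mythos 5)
Your proof is correct. Note that the paper itself states Lemma \ref{Lem_A.2} without any proof (it is used only once, to justify an application of Parseval's formula in the proof of Lemma \ref{Lem.Est_J}), so there is no "paper's argument" to compare against; your write-up simply supplies the missing elementary verification. The frequency-side estimates are all right: the splitting at $|\xi|=1$, the first-order vanishing of $\sin(t|\xi|)$ and of $\cos(t|\xi|)-e^{-t|\xi|}$ at the origin (which makes the local piece integrable for any $\alpha<3/2$), and the tail bound where $\alpha>\tfrac12$ is used exactly once. Your care in the last step is also warranted and correct: since $\hat{\cS}_{\alpha}(t,\cdot)$ and $\hat{\cC}_{\alpha}(t,\cdot)$ decay only like $|\xi|^{-\alpha}$ with $\alpha<1$, they are not in $L^{1}(\RR)$, so one cannot invoke the pointwise inversion formula naively; combining the $L^{1}$-membership of $\cS_{\alpha}(t,\cdot)$ and $\cC_{\alpha}(t,\cdot)$ from Lemma \ref{Lem_A.1} with Plancherel and injectivity of the Fourier transform on tempered distributions is the clean way to conclude. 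An equally short alternative, closer in spirit to the asymptotics already recorded in Lemma \ref{Lem_A.1}, is to check $L^{2}$-membership directly on the physical side: the only singularity of $\cS_{\alpha}(t,x)$ and $\cC_{\alpha}(t,x)$ is of order $|t-|x||^{\alpha-1}$ near $|x|=t$, which is square-integrable precisely when $\alpha>\tfrac12$, while the decay at infinity (of order $|x|^{\alpha-2}$, resp. $|x|^{-\alpha-1}$) takes care of the tail. Either route is fine; yours has the advantage of reusing the Fourier formulas \eqref{eq.Fourier} verbatim.
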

\section{Lemmas for Proposition \ref{prop_est}}\label{Lemma for 3.3}
\begin{lem}\label{Lem.Est_J}
	If $p>\frac 1H$, $1-H<\alpha<1-\frac 1p$ and $1-2/q+\alpha<\theta<H+\alpha-1/2$ , then there exists a constant $C$ independent of $r$ such that 
  \begin{equation}\label{Est.CharI2}
    \EE\|J^{\cK_{i}}_{\theta}(r,\cdot)\|_{L^p(\RR)}^p \leq C\|v\|_{\cZ^p(T)}^p\,, \quad  i=1,2,3,4\,,
  \end{equation}
  where $J^{\cK_{i}}_{\theta}$ (depending on $\alpha,\theta$) and $\cK_{i}$ (depending on $\alpha$) are defined by  \eqref{def.JK_theta}  and \eqref{def.cK} respectively.
\end{lem}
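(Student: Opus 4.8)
I would bound $\EE|J^{\cK_i}_\theta(r,z)|^p$ pointwise by the Burkholder--Davis--Gundy inequality of Proposition~\ref{Prop.HBDG}, then, after integrating in $z$, reduce the resulting quantity to two purely deterministic time integrals involving the kernels $\cK_i$ and their spatial increments, and finally evaluate those by Plancherel's theorem together with the explicit Fourier transforms of Lemma~\ref{Lem_A.1}. Fix $i$ and $z$. For fixed $r$ the field $g(s,y):=(r-s)^{-\theta}\cK_i(r-s,z-y)v(s,y)\1_{[0,r]}(s)$ is predictable and $J^{\cK_i}_\theta(r,z)=\int_0^r\int_\RR g(s,y)W(ds,dy)$, so Proposition~\ref{Prop.HBDG} gives $\EE|J^{\cK_i}_\theta(r,z)|^p\lesssim\big(\int_0^r\int_\RR[\cN_{\frac12-H,p}g(s,\cdot)]^2(y)\,dy\,ds\big)^{p/2}$, the operator $\cN_{\frac12-H,p}$ acting in $y$. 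Writing $\fD_h g(s,y)=(r-s)^{-\theta}\big[(\cK_i(r-s,z-y-h)-\cK_i(r-s,z-y))v(s,y+h)+\cK_i(r-s,z-y)\fD_h v(s,y)\big]$ and using $(a+b)^2\le2a^2+2b^2$, one splits $[\cN_{\frac12-H,p}g(s,\cdot)]^2(y)\lesssim(r-s)^{-2\theta}(A+B)$, where $A$ carries the spatial increment of $\cK_i$ weighted by $\|v(s,y+h)\|_{L^p(\Omega)}^2$ and $B=|\cK_i(r-s,z-y)|^2[\cN_{\frac12-H,p}v(s,y)]^2$. Hence $\EE|J^{\cK_i}_\theta(r,z)|^p\lesssim\mathcal{A}(r,z)^{p/2}+\mathcal{B}(r,z)^{p/2}$ with $\mathcal{A},\mathcal{B}$ the corresponding $(r-s)^{-2\theta}\,dy\,ds$-integrals. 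The substitution $w=z-y$ makes the kernel factors independent of $z$; applying Minkowski's integral inequality in $(s,w,h)$ (and, for $\mathcal{B}$, the estimate $\|\cN_{\frac12-H,p}v(s,\cdot)\|_{L^p(\RR)}\le\cN^{*}_{\frac12-H,p}v(s)$, itself Minkowski) together with translation invariance of the $L^p(\Omega\times\RR)$-norm yields
\[
\int_\RR\mathcal{A}(r,z)^{p/2}dz\ \lesssim\ \|v\|_{\cZ^p(T)}^p\Big(\int_0^r(r-s)^{-2\theta}\,\Xi_i(r-s)\,ds\Big)^{p/2},
\]
\[
\int_\RR\mathcal{B}(r,z)^{p/2}dz\ \lesssim\ \|v\|_{\cZ^p(T)}^p\Big(\int_0^r(r-s)^{-2\theta}\,\|\cK_i(r-s,\cdot)\|_{L^2(\RR)}^2\,ds\Big)^{p/2},
\]
where $\Xi_i(\tau):=\int_\RR\int_\RR|\fD_h\cK_i(\tau,w)|^2|h|^{2H-2}dh\,dw$, so the lemma follows once both time integrals are finite uniformly in $r\le T$.

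\noindent\textbf{The deterministic integrals.} By Plancherel in $w$ and $\int_\RR|e^{\iota h\xi}-1|^2|h|^{2H-2}dh=c_H|\xi|^{1-2H}$ (valid for $H\in(0,\tfrac12)$), one has $\Xi_i(\tau)\simeq\int_\RR|\hat\cK_i(\tau,\xi)|^2|\xi|^{1-2H}d\xi$ and $\|\cK_i(\tau,\cdot)\|_{L^2}^2\simeq\int_\RR|\hat\cK_i(\tau,\xi)|^2d\xi$. For $i=1,2$, inserting $\hat\cC_\alpha(\tau,\xi)=|\xi|^{-\alpha}(\cos\tau|\xi|-e^{-\tau|\xi|})$ and $\hat\cS_\alpha(\tau,\xi)=|\xi|^{-\alpha}\sin\tau|\xi|$: boundedness of the oscillatory factors forces integrability near $\xi=\infty$ exactly when $2\alpha-1+2H>1$, i.e. $\alpha>1-H$ (the requirement $2\alpha>1$ for $\|\cK_i\|_{L^2}^2$ being implied since $H<\tfrac12$), while near $\xi=0$ the factors vanish linearly and impose nothing. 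The scaling $\xi\mapsto\xi/\tau$ then gives $\Xi_i(\tau)\simeq\tau^{2\alpha+2H-2}$ and $\|\cK_i(\tau,\cdot)\|_{L^2}^2\simeq\tau^{2\alpha-1}$, so the time integrals converge iff $-2\theta+2\alpha+2H-2>-1$, i.e. $\theta<\alpha+H-\tfrac12$ — precisely the stated upper bound on $\theta$, which is the binding one since $H<\tfrac12$. Thus the lemma holds for $\cK_1=\cC_\alpha$ and $\cK_2=\cS_\alpha$ under $p>1/H$, $1-H<\alpha<1-1/p$, $1-2/q+\alpha<\theta<H+\alpha-\tfrac12$. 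The case $\cK_3=\cS=G$ is easier, $\cS(\tau,\cdot)$ being bounded with support in $\{|w|<\tau\}$: here $\|\cS(\tau,\cdot)\|_{L^2}^2\simeq\tau$ and $\Xi_3(\tau)\simeq\tau^{2H}$, and convergence holds because $\theta<1$ and $\theta<H+\alpha-\tfrac12<H+\tfrac12$.

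\noindent\textbf{Expected obstacle.} The genuinely delicate case is $\cK_4=\cE$. With $\hat\cE(\tau,\xi)=e^{-\tau|\xi|}$ the same computation yields only $\Xi_4(\tau)\simeq\tau^{2H-2}$ and $\|\cE(\tau,\cdot)\|_{L^2}^2\simeq\tau^{-1}$, neither of which is integrable against $(r-s)^{-2\theta}$ for any $\theta\ge0$; the crude splitting above is simply too lossy for the Poisson kernel. To control $J^{\cE}_\theta$ I would instead exploit the special structure of $\cE$ — it is nonnegative, $\|\cE(\tau,\cdot)\|_{L^1}=1$, and it obeys the semigroup identity $\cE(\tau_1,\cdot)*\cE(\tau_2,\cdot)=\cE(\tau_1+\tau_2,\cdot)$ — so that the $\cE$-part of the stochastic convolution is handled by a Fubini/semigroup argument of the kind available for the heat kernel, rather than by the fractional-increment bound. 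Once this term is disposed of, assembling the four cases and collecting constants completes the proof.
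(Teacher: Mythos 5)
For $i=1,2,3$ your route is essentially the paper's own: apply the Burkholder--Davis--Gundy inequality of Proposition~\ref{Prop.HBDG}, split the increment of $\cK_i(\tau,z-\cdot)v(s,\cdot)$ into the piece paired with $\|v\|_{\cZ_1^p(T)}$ and the piece paired with $\|v\|_{\cZ_2^p(T)}$, use Minkowski and Plancherel, and read off the scalings $\Xi_i(\tau)\simeq\tau^{2\alpha+2H-2}$ and $\|\cK_i(\tau,\cdot)\|_{L^2}^2\simeq\tau^{2\alpha-1}$; the conditions $\alpha>1-H$ and $\theta<H+\alpha-\tfrac12$ you extract are exactly \eqref{Ineq.alpha_I3}, and your treatment of $\cK_3=\cS$ is correct.

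The gap is the case $i=4$, $\cK_4=\cE$. You correctly compute $\Xi_4(\tau)\simeq\tau^{2H-2}$ and $\|\cE(\tau,\cdot)\|_{L^2}^2\simeq\tau^{-1}$, neither integrable against $\tau^{-2\theta}$ for any $\theta\in(0,1)$, but the remedy you sketch does not close the case. The obstruction is not the lack of a Chapman--Kolmogorov identity (the factorization $\int_s^t(t-r)^{\theta-1}(r-s)^{-\theta}dr=\pi/\sin(\theta\pi)$ already provides one); it is the scaling of $\hat\cE(\tau,\xi)=e^{-\tau|\xi|}$, which decays in $\xi$ one power too slowly compared with the heat kernel's $e^{-\tau\xi^2}$ (for which the analogous quantity is $\tau^{H-1}$, integrable against $\tau^{-2\theta}$ for $\theta<H/2$). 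Re-factorizing via $\cE(\tau_1,\cdot)\ast\cE(\tau_2,\cdot)=\cE(\tau_1+\tau_2,\cdot)$ reproduces the same inner stochastic integral and the same divergence. Nor is the divergence an artifact of a ``lossy'' splitting: by the isometry in Proposition~\ref{Int_Gen}, for a deterministic bump $v$ with $v(z)\neq0$ one has $\EE|J^{\cE}_{\theta}(r,z)|^2\gtrsim |v(z)|^2\int_0^r(r-s)^{-2\theta}(r-s)^{2H-2}ds=\infty$, so the leading term genuinely appears without cancellation. Hence your case $i=4$ is unproven, and the proposed semigroup route would not prove it. (For what it is worth, the paper's own proof treats only $i=1,2$ in detail and dismisses $i=3,4$ as ``simpler''; your computation shows that this dismissal is unjustified for $i=4$, so the difficulty you hit is a soft spot of the lemma itself and not merely of your write-up.)
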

\begin{proof}
	We will prove the above bound \eqref{Est.CharI2}   for $i=1, 2, 3, 4$ separately. We deal with the term $i=1$ first. In this case  $\cK_{1}=\cC_{\alpha}$ and $\bar{\cK}_{1}=\cS_{1-\alpha}$ as defined by  \eqref{eq.cC_alpha}. From the definition \eqref{def.JK_theta} of $J^{\cK_{1}}_{\theta}$ and  from   Burkholder-Davis-Gundy's  inequality and the  triangle inequality it follows
  \begin{equation*}
    \int_{\RR}\EE \big|J^{\cC_{\alpha}}_{\theta}(r,z) \big|^p dz\lesssim \int_{\RR} \big| \cD_1(r,z)\big|^{\frac p2} +\big| \cD_2(r,z)\big|^{\frac p2} dz\,,
  \end{equation*}
  where we have used two notations
  \begin{align}\label{eq.D1_def}
     \cD_1(r,z):= &\int_{0}^{r}\int_{\RR^2}(r-s)^{-2\theta} \lt|\fD_h\cC_{\alpha}(r-s,y)\rt|^2 \nonumber \\
      &\qquad\   \cdot \|v(s,y+z)\|_{L^p(\Omega)}^2|h|^{2H-2}dhdyds\,,
  \end{align}
  and
  \begin{align}\label{eq.D2_def}
      \cD_2(r,z) &:=\int_{0}^{r}\int_{\RR^2}(r-s)^{-2\theta}|\cC_{\alpha}(r-s,y)|^2 \nonumber \\
      &\qquad\qquad \cdot\|\fD_h v(s,z+y)\|_{L^p(\Omega)}^2 |h|^{2H-2} dhdyds\,.
  \end{align}

  By the definition of $\cZ_1^p(T)$ in \eqref{def.ZNorm}, we can bound $D_1(r):=\int_{\RR}  \lt| \cD_1(r,z)\rt|^{\frac p2} dz$ as follows.
  \begin{align}\label{Est.Char1D1}
    D_1(r)
    \lesssim&\,  \bigg(\int_{0}^{r}\int_{\RR^2}(r-s)^{-2\theta} \lt|\fD_h\cC_{\alpha}(r-s,y)\rt|^2
                |h|^{2H-2}dhdyds\bigg)^{\frac p2}\times \|v\|_{\cZ_{1}^p(T)}^p \nonumber\\
              \simeq&\, \bigg(\int_{0}^{r}\int_{\RR^2}s^{-2\theta} \lt|\fD_h\hat{\cC}_{\alpha}(s,\xi)\rt|^2 |h|^{2H-2}dhd\xi ds\bigg)^{\frac p2}\times \|v\|_{\cZ_{1}^p(T)}^p\,.
  \end{align}
  In the last line of \eqref{Est.Char1D1}, we have applied Parseval's formula which is legitimate  since $\cC_{\alpha}(s,\cdot)$ is in $L^2(\RR)$ when
  \begin{equation}\label{Ineq.alpha_I2}
    \frac 12<\alpha\leq 1\,,
  \end{equation}
  by Lemma \ref{Lem_A.2}.
  Through \eqref{eq.Fourier}, we can write \eqref{Est.Char1D1} as
  \begin{align}\label{Est.Char1D1F}
    D_1(r)
    \lesssim& \bigg(\int_{0}^{r}\int_{\RR^2}s^{-2\theta} \lt|\frac{\cos(s|\xi|)-e^{-s|\xi|}}{|\xi|^{\alpha}} \rt|^2[1-\cos(h|\xi|)] |h|^{2H-2}dhd\xi ds\bigg)^{\frac p2}\times \|v\|_{\cZ_{1}^p(T)}^p \nonumber\\
    \simeq& \bigg(\int_{0}^{r}\int_{\RR}s^{-2\theta} \lt|\cos(s|\xi|)-e^{-s|\xi|}\rt|^2|\xi|^{1-2\alpha-2H} d\xi ds\bigg)^{\frac p2}\times \|v\|_{\cZ_{1}^p(T)}^p \nonumber\\
    \simeq& \bigg(\int_{0}^{r}s^{2(H+\alpha-\theta-1)}ds\cdot \int_{0}^{\infty}\xi^{1-2\alpha-2H}\lt|\cos(\xi)-e^{-\xi}\rt|^2 d\xi\bigg)^{\frac p2}\times \|v\|_{\cZ_{1}^p(T)}^p\,,
  \end{align}
  which is finite if
  \begin{equation}\label{Ineq.alpha_I3}
    1-2\alpha-2H<-1\,,~2(H+\alpha-\theta-1)>-1\, \Leftrightarrow\, \alpha>1-H\,,~\theta<H+\alpha-\frac 12\,.
  \end{equation}

  Similarly, by the definition of $\cZ_2^p(T)$ in \eqref{def.ZNorm},   for $D_2(r):=\int_{\RR}  \left| \cD_2(r,z)\rt|^{\frac p2} dz$, Parseval's formula implies
  \begin{align}\label{Est.Char1D2}
     D_2(r)
     \lesssim& \bigg( \int_{0}^{r}\int_{\RR}(r-s)^{-2\theta}|\cC_{\alpha}(r-s,y)|^2 dyds\bigg)^{\frac p2}\times \|v\|_{\cZ_{2}^p(T)}^p \nonumber\\
     \simeq& \bigg( \int_{0}^{r}\int_{\RR}s^{-2\theta}|\hat{\cC}_{\alpha}(s,\xi)|^2 d\xi ds\bigg)^{\frac p2}\times \|v\|_{\cZ_{2}^p(T)}^p\,,
  \end{align}
  if $\alpha$ satisfies \eqref{Ineq.alpha_I2}. Then plugging \eqref{eq.Fourier}, we have
  \begin{align}\label{Est.Char1D2F}
    D_2(r)
    \lesssim& \bigg(\int_{0}^{r}\int_{\RR}s^{-2\theta} \lt|\frac{\cos(s|\xi|)-e^{-s|\xi|}}{|\xi|^{\alpha}} \rt|^2 d\xi ds\bigg)^{\frac p2}\times \|v\|_{\cZ_{2}^p(T)}^p \nonumber\\
    \simeq& \bigg(\int_{0}^{r}s^{2(\alpha-\theta)-1}ds\cdot \int_{0}^{\infty} \xi^{-2\alpha} \lt|\cos(s\xi)-e^{-s\xi}\rt|^2 d\xi\bigg)^{\frac p2}\times \|v\|_{\cZ_{2}^p(T)}^p\,,
  \end{align}
  which is finite since $\frac 12<\alpha\leq1$ and $\alpha>\frac 12-H+\theta>\theta$ by \eqref{Ineq.alpha_I3}.

  Thus, with the choice of $\theta<H+\alpha-1/2$ and $\alpha>1-H$,   we have finished the proof \eqref{Est.CharI2} for $i=1$.

Now let us deal with the case when $i=2$. Similar to the proof in the case  $i=1$, now we need to show
 \begin{equation*}
    \|J^{\cS_{\alpha}}_{\theta}(r,z)\|_{L^p(\Omega\times\RR)}^p \leq C\|v\|_{\cZ^p(T)}^p\,.
  \end{equation*}
 From the definition \eqref{def.JK_theta} of $J_\theta$ and  from  Burkholder-Davis-Gundy's inequality it follows
\begin{equation*}
    \int_{\RR}\EE \big|J^{\cS_{\alpha}}_{\theta}(r,z)\big|^p dz\lesssim \int_{\RR} \blk \widetilde{\cD}_1(r,z)\brk^{\frac p2} +\blk \widetilde{\cD}_2(r,z)\brk^{\frac p2} dz \,,
  \end{equation*}
  where   $\widetilde{\cD}_1(r,z)$ and $\widetilde{\cD}_2(r,z)$ are  defined by   \eqref{eq.D1_def} and \eqref{eq.D2_def}, respectively, with $\cC_\alpha$ replaced by $\cS_\alpha$.

  By the definition of $\cZ_1^p(T)$ in \eqref{def.ZNorm} and Minkowski's
   inequality, we have
  \begin{align}\label{Est.Char1TilD1}
    \widetilde{D}_1(r):=& \int_{\RR} \left| \widetilde{\cD}_1(r,z)\right|^{\frac p2} dz\nonumber\\
    \lesssim&  \lt(\int_{0}^{r}\int_{\RR^2}(r-s)^{-2\theta} \big|\fD_h\cS_{\alpha}(r-s,y)\big|^2|h|^{2H-2}dhdyds\rt)^{\frac p2}
	\times\|v\|_{\cZ_{1}^p(T)}^p \nonumber\\
    \ls& \bigg(\int_{0}^{r}s^{2(H+\alpha-\theta-1)}ds\cdot \int_{0}^{\infty}\xi^{1-2\alpha-2H}\lt|\sin(\xi)\rt|^2 d\xi\bigg)^{\frac p2} \times\|v\|_{\cZ_{1}^p(T)}^p\,,
  \end{align}
which is finite under the condition \eqref{Ineq.alpha_I3}.
In a similar way we can get
\begin{align}\label{Est.Char1TilD2}
    \widetilde{D}_2(r):=&   \int_{\RR}\left|\widetilde{\cD}_2(r,z)\right|^{\frac p2} dz\nonumber\\
      \lesssim&\bigg( \int_{0}^{r}\int_{\RR}(r-s)^{-2\theta}|\cS_{\alpha}(r-s,y)|^2 dyds\bigg)^{\frac p2}\times \|v\|_{\cZ_{2}^p(T)}^p \nonumber\\
     \ls& \bigg(\int_{0}^{r}s^{2(\alpha-\theta)-1}dr\cdot \int_{0}^{\infty} \xi^{-2\alpha} \lt|\sin(\xi)\rt|^2 d\xi\bigg)^{\frac p2}\times \|v\|_{\cZ_{2}^p(T)}^p\,,
\end{align}
which is clearly bounded   by \eqref{Ineq.alpha_I3}
since $\frac12<\alpha< 1$ and $\alpha>\theta$.

Therefore, with the choice of $\theta\in(1-2/q+\alpha,H+\alpha-1/2)$, we finish the proof of \eqref{Est.CharI2} when $i=2$.
  The remaining parts  of \eqref{Est.CharI2}, i.e.
  the cases $\cK_{3}=\cS$ and   $\cK_{4}=\cE$ can be completed in the same spirit and we omit the details since they are actually  simpler.
\end{proof}

\begin{lem}\label{Lem.Est_D}
	If $p>\frac 1H$, $\frac 32-2H<\alpha<1-\frac 1p$ and  $1-2/q+\alpha<\theta<2H+\alpha-1$ , then there exists a constant $C$ independent of $r\in[0,T]$ such that for $   i=1,2,3,4$
  \begin{equation}\label{Est.Char2_lem}
   \EE\int_{\RR} \Blk\int_{\RR}\lt|J^{\cK_{i}}_{\theta}(r,z+h)-J^{\cK_{i}}_{\theta}(r,z)\rt|^2|h|^{2H-2}dh\Brk^{\frac p2} dz \leq C  \|v\|^p_{\cZ^p(T)}\,,
  \end{equation}
  where $J^{\cK_{i}}_{\theta}$ (depending on $\alpha,\theta$) and $\cK_{i}$ (depending on $\alpha$) are defined by  \eqref{def.JK_theta}  and \eqref{def.cK} respectively.
\end{lem}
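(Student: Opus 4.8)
The plan is to mimic the structure of the proof of Lemma \ref{Lem.Est_J}, but now inserting the extra increment $\fD_h$ in the space variable, which produces a second-difference operator $\Box_{h,\cdot}$ acting on the kernel $\cK_i$. I would fix $i=1$ first, so $\cK_1=\cC_\alpha$ (and recall $\bar{\cK}_1=\cS_{1-\alpha}$); the other three cases $i=2,3,4$ are handled the same way and are in fact simpler, so I would relegate them to a remark at the end. Writing $\fD_h J^{\cC_\alpha}_\theta(r,z) = \int_0^r\int_{\RR}(r-s)^{-\theta}\,\fD_h\cC_\alpha(r-s,z-y)\,v(s,y)\,W(ds,dy)$, I apply the Burkholder-Davis-Gundy inequality \eqref{e.bdg} (with respect to the spatial difference introduced by $\cN_{\frac12-H,p}$, which manufactures yet another increment, in the auxiliary variable $h'$ say). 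Splitting the resulting square of a sum into two pieces via the triangle inequality — one where the $\fD_{h'}$ falls on the kernel $\fD_h\cC_\alpha$ and one where it falls on $v$ — gives, after raising to the $p/2$ power and integrating in $z$, two terms $\widehat{\cD}_1(r)$ and $\widehat{\cD}_2(r)$ analogous to $D_1(r),D_2(r)$ in \eqref{Est.Char1D1}–\eqref{Est.Char1D2F}, but with $\cC_\alpha$ replaced by $\fD_h\cC_\alpha = \Box_{h,h'}\cC_\alpha$ (schematically).

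The core computation is then a Fourier/Parseval estimate. On the Fourier side $\widehat{\fD_h\cC_\alpha}(s,\xi) = (e^{\iota h\xi}-1)\,\dfrac{\cos(s|\xi|)-e^{-s|\xi|}}{|\xi|^\alpha}$, so that $|\widehat{\fD_h\cC_\alpha}(s,\xi)|^2 \lesssim |1-\cos(h\xi)|\,|\xi|^{-2\alpha}|\cos(s|\xi|)-e^{-s|\xi|}|^2$. For $\widehat{\cD}_1(r)$ I would, after invoking Parseval in $z$ and using $\int_\RR(1-\cos(h'\xi))|h'|^{2H-2}dh'\simeq|\xi|^{1-2H}$ for the $\cN_{\frac12-H,p}$-increment, be left with $\big(\int_0^r\int_\RR s^{-2\theta}(1-\cos(h\xi))|\xi|^{-2\alpha}|\cos(s|\xi|)-e^{-s|\xi|}|^2|\xi|^{1-2H}d\xi\,ds\big)^{p/2}\|v\|_{\cZ^p}^p$; the extra factor $1-\cos(h\xi)\lesssim |h\xi|^{2}\wedge 1$ must then be integrated against $dz$ (equivalently used to produce the required decay), and scaling $s\xi$ leads to $\int_0^r s^{2(2H+\alpha-\theta-1)-1}\,ds$ together with a convergent $\xi$-integral, finite precisely when $\theta<2H+\alpha-1$ and $\alpha>\tfrac32-2H$ (the latter being the price of the \emph{second} spatial difference, compared with $\alpha>1-H$ in Lemma \ref{Lem.Est_J}). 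Legitimacy of Parseval requires $\cC_\alpha(s,\cdot)\in L^2(\RR)$, i.e. $\tfrac12<\alpha<1$, supplied by Lemma \ref{Lem_A.2}; this is compatible with $\tfrac32-2H<\alpha<1-\tfrac1p$ since $H>\tfrac14$ forces $\tfrac32-2H<1$. The second term $\widehat{\cD}_2(r)$, where the $\cN$-increment acts on $v$, is controlled by $\|v\|_{\cZ_2^p(T)}$ via the bound $\big(\int_0^r\int_\RR s^{-2\theta}(1-\cos(h\xi))|\xi|^{-2\alpha}|\cos(s|\xi|)-e^{-s|\xi|}|^2 d\xi\,ds\big)^{p/2}$, convergent under the same constraints (here one uses $\alpha>\theta$, which follows from $\theta<2H+\alpha-1<\alpha$).

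I expect the main obstacle to be the \textbf{bookkeeping of the two independent spatial increments}, $h$ (the one recorded by the $\cZ^p$-variant norm we are estimating) and $h'$ (the BDG integration variable from $\cN_{\frac12-H,p}$): one must carefully decide which increment is cashed in for kernel regularity and which for the $|\xi|^{1-2H}$ spectral density, and then verify that the leftover $1-\cos(h\xi)$ genuinely yields a $z$-integrable (equivalently, finite) quantity rather than just a formal bound. The estimate $|1-\cos(h\xi)|\lesssim|h\xi|^2\wedge 1$ split at $|\xi|\sim 1/|h|$ is what makes the relevant $\xi$-integral converge at infinity, and it is this split that pins down the sharper lower bound $\alpha>\tfrac32-2H$. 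Once $i=1$ is done, for $i=2$ ($\cK_2=\cS_\alpha$) one replaces $|\cos(s|\xi|)-e^{-s|\xi|}|^2$ by $|\sin(s|\xi|)|^2\le 1$ and the identical computation goes through; the cases $\cK_3=\cS$ and $\cK_4=\cE$ are handled likewise (indeed more easily, as in Lemma \ref{Lem.Est_J}), so I would simply state that they follow by the same argument and omit the details.
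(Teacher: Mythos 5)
Your proposal is correct and follows essentially the same route as the paper: BDG for the $h$-incremented stochastic integral, a triangle-inequality split according to whether the BDG increment ($l$, your $h'$) falls on the kernel (yielding $\Box_{h,l}\cC_\alpha$ paired with $\|v\|_{\cZ_1^p}$) or on $v$ (yielding $\fD_h\cC_\alpha$ paired with $\|v\|_{\cZ_2^p}$), then Plancherel with $\int_\RR(1-\cos(h\xi))|h|^{2H-2}dh\simeq|\xi|^{1-2H}$ applied twice for the first term and once for the second, and scaling to extract exactly the conditions $\theta<2H+\alpha-1$ and $\alpha>\tfrac32-2H$. Two cosmetic slips — the factor $1-\cos(h\xi)$ is integrated against $|h|^{2H-2}dh$, not $dz$, and the second term's convergence rests on $\alpha>1-H$ and $\theta<H+\alpha-\tfrac12$ (both implied by your hypotheses) rather than on $\alpha>\theta$ — but your exponent arithmetic is consistent with the correct procedure, so these do not affect the argument.
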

\begin{proof}
Recall that $\fD_hJ^{\cK_{i}}_{\theta}(r,z):=J^{\cK_{i}}_{\theta}(r,z+h)-J^{\cK_{i}}_{\theta}(r,z)$. We still  first consider the case when $i=1$, i.e. $\cK_{1}=\cC_{\alpha}$ and $\bar{\cK}_{1}=\cS_{1-\alpha}$ defined by  \eqref{eq.cC_alpha}. We only need to prove that there exists some constant $C $, independent of $r\in [0, T]$,  such that
  \begin{equation}\label{Est.Char2}
  \begin{split}
  	\int_{\RR} \EE& \lk\int_{\RR}\lt|\fD_hJ^{\cC_{\alpha}}_{\theta}(r,z)\rt|^2 |h|^{2H-2}dh\rk^{\frac p2}dz \\
  	&\leq \lc \int_{\RR} \|\fD_h J^{\cC_{\alpha}}_{\theta}(r,z) \|^2_{L^p(\RR\times\Omega)} |h|^{2H-2}dh\rc^{\frac p2} \leq C  \|v\|^p_{\cZ^p(T)}\,,
  \end{split}
  \end{equation}
  where we  employed Minkowski's inequality in the above first inequality.

  Thanks to Burkholder-Davis-Gundy's  inequality, the triangle inequality and then a change of variable $y\to z-y$, we have
  \begin{align*}
     \EE\blk|\fD_h &J^{\cC_{\alpha}}_{\theta}(r,z)|^p\brk \\
     \leq& C_p \bigg( \int_{0}^{r}(r-s)^{-2\theta}\int_{\RR^2} \bigg[ \EE\Big|\fD_h \cC_{\alpha}(r-s,z-y-l) v(s,y+l) \\
       &\qquad\qquad\  -\fD_h \cC_{\alpha}(r-s,z-y) v(s,y)\Big|^p\bigg]^{\frac{2}{p}} |l|^{2H-2} dl dyds\bigg)^{\frac p2} \\
     \leq& C_p \bigg( \int_{0}^{r}(r-s)^{-2\theta}\int_{\RR^2} |\fD_h\cC_{\alpha}(r-s,y)|^2 \|v(s,y+z)\|^2_{L^p(\Omega)} |l|^{2H-2} dl dyds\bigg)^{\frac p2} \\
     &+ C_p \bigg( \int_{0}^{r}(r-s)^{-2\theta}\int_{\RR^2} \Big|\Box_{h,l}\cC_\alpha(r-s,y)\Big|^2 \|\fD_l v(s,y+z)\|^2_{L^p(\Omega)}  |l|^{2H-2} dl dyds\bigg)^{\frac p2}\,.
   \end{align*}
  Therefore, by Minkowski's inequality
  \begin{align*}
  	\int_{\RR} \|\fD_h &J^{\cC_{\alpha}}_{\theta}(r,\cdot) \|^2_{L^p(\RR\times\Omega)} |h|^{2H-2}dh\\
  	=&\ \int_{\RR} \lc\int_{\RR} \EE\blk|\fD_h J^{\cC_{\alpha}}_{\theta}(r,z)|^p\brk dz \rc^{\frac{2}{p}} |h|^{2H-2}dh \\
  	\leq&\  \int_{0}^{r}\int_{\RR^2} (r-s)^{-2\theta} |\fD_h\cC_{\alpha}(r-s,y)|^2 |h|^{2H-2}dhdyds \times \|v\|^2_{\cZ_{2}^p(T)} \\
  	&+\int_{0}^{r}\int_{\RR^3} (r-s)^{-2\theta} \Big|\Box_{h,l}\cC_\alpha(r-s,y)\Big|^2|l|^{2H-2}|h|^{2H-2}dldhdyds \times \|v\|^2_{\cZ_{1}^p(T)}\\
  	=:& \cJ_1(r,z)\times \|v\|^2_{\cZ_{2}^p(T)}+\cJ_2(r,z)\times \|v\|^2_{\cZ_{1}^p(T)}\,.
  \end{align*}
Applying \eqref{eq.Fourier} and Parseval's formula again, one can find
  \begin{align}\label{Est.Char1J1}
     \cJ_1(r,z)\es& \int_{0}^{r}\int_{\RR^2} (r-s)^{-2\theta} \Big|\fD_h\hat{\cC}_{\alpha}(r-s,\xi)\Big|^2 |h|^{2H-2}dhd\xi ds \nonumber\\
     \lesssim& \int_{0}^{r}s^{2(H+\alpha-\theta-1)}dr\cdot \int_{0}^{\infty}\xi^{1-2\alpha-2H}\lt|\cos(\xi)-e^{-\xi}\rt|^2 d\xi\,,
  \end{align}
  which is finite if \eqref{Ineq.alpha_I3} is  satisfied. Similarly, we have
  \begin{align}\label{Est.Char1J2}
    \cJ_2(r,z) \es&\int_{0}^{r}\int_{\RR^3} (r-s)^{-2\theta}|\xi|^{-2\alpha}\lt|\cos\blc(r-s)|\xi|\brc-e^{-(r-s)|\xi|}\rt|^2 \nonumber\\
    &~~~\times[1-\cos(|l\xi|)][1-\cos(|h\xi|)]\cdot|l|^{2H-2}|h|^{2H-2}dldhd\xi ds \nonumber\\
    \es&\int_{0}^{r}(r-s)^{2(\alpha+2H-\theta)-3}ds\cdot\int_{0}^{\infty} \xi^{2(1-\alpha-2H)}|\cos(\xi)-e^{-\xi}|^{2}d\xi\,.
  \end{align}
  In order to guarantee the integrals in \eqref{Est.Char1J2} converge, we must have
  \begin{align}\label{Ineq.alpha_J}
    &\ 2(\alpha+2H-\theta)-3>-1\,,~2(1-\alpha-2H)<-1 \nonumber\\
 &\qquad\qquad    \Leftrightarrow \ \theta<\alpha+2H-1\,,~\alpha>\frac 32-2H\,.
  \end{align}

 Therefore, with the choice of $\theta\in(1-2/q+\alpha,2H+\alpha-1)$ and $\alpha\in (\frac 32-2H, 1-\frac 1p)$ which implies $p>\frac{1}{H}$, by noting that $\frac{2}{4H-1}>\frac 1H$ when $H<\frac 12$, then the conditions   \eqref{Ineq.alpha_I3} and \eqref{Ineq.alpha_J} are satisfied. Thus, we complete the proof of \eqref{Est.Char2}.

  Now we   show   \eqref{Est.Char2_lem} for  $i=2$, i.e. $\cK_{2}=\cS_{\alpha}$ and $\bar{\cK}_{2}=\cC_{1-\alpha}$ only briefly since   the idea will be similar as in  the above  case   $i=1$. We only need to show that there exists some constant $C $   independent of $r\in [0, T]$,  such that
  \begin{equation}\label{Est.Char2_2}
     \EE \Blk\int_{\RR}\lt|\fD_hJ^{\cS_{\alpha}}_{\theta}(r,z)\rt|^2|h|^{2H-2}dh\Brk^{\frac p2}\leq C  \|v\|^p_{\cZ^p(T)}\,.
  \end{equation}
  Using Burkholder-Davis-Gundy's  inequality, Minkowski's inequality and then the  triangle inequality, we have the left hand side of \eqref{Est.Char2_2} is bounded by
  \begin{align*}
    \left(\widetilde{\cJ}_1(r,z)\right)^{\frac p2}\times \|v\|^p_{\cZ_{2}^p(T)} +\left(\widetilde{\cJ}_2(r,z)\right)^{\frac p2}\times \|v\|^p_{\cZ_{1}^p(T)}\,.
  \end{align*}
   Applying \eqref{eq.Fourier} and Parseval's formula again, one   finds
  \begin{align}\label{Est.Char1TilJ1}
     \widetilde{\cJ}_1(r,z):=&\int_{0}^{r}\int_{\RR^2} (r-s)^{-2\theta} |\fD_h\cS_{\alpha}(r-s,y)|^2 |h|^{2H-2}dhdyds \nonumber\\
     \lesssim& \int_{0}^{r}s^{2(H+\alpha-\theta-1)}dr\cdot \int_{0}^{\infty}\xi^{1-2\alpha-2H}\lt|\sin(\xi)\rt|^2 d\xi\,,
  \end{align}
  which is obviously bounded if \eqref{Ineq.alpha_I3} is satisfied. Similarly, we have
  \begin{align}\label{Est.Char1TilJ2}
    \widetilde{\cJ}_2(r,z):=&\int_{0}^{r}\int_{\RR^3} (r-s)^{-2\theta} \lt|\fD_h\cS_{\alpha}(r-s,y+l)
    -\fD_h\cS_{\alpha}(r-s,y)\rt|^2\nonumber\\ &\qquad\qquad\qquad\qquad\qquad\qquad\times|l|^{2H-2}|h|^{2H-2}dldhdyds \nonumber\\
    \es&\int_{0}^{r}(r-s)^{2(\alpha+2H-\theta)-3}ds\cdot\int_{0}^{\infty} \xi^{2(1-\alpha-2H)}|\sin(\xi)|^{2}d\xi\,,
  \end{align}
  which is finite under \eqref{Ineq.alpha_J}.

  Therefore, with the  choice   $\theta\in(1-\frac2q+\alpha,2H+\alpha-1)$ and   $\alpha\in (\frac 32-2H, 1-\frac 1p)$ which implies $p>\frac{1}{H}$, we see the  conditions  \eqref{Ineq.alpha_I3} and \eqref{Ineq.alpha_J} are satisfied. So we finish the proof of \eqref{Est.Char2_2}. The other cases of \eqref{Est.Char2_lem} when $i=3$ and $i=4$ can be done by using the same strategy and  we omit them here.
\end{proof}

\section{Lemmas for Proposition \ref{p.4.1}}\label{Appen.C}
Our aim is to show for any $p>\frac 1H$ and $\gamma<H-\frac 1p$, the temporal-spatial H\"older continuity in Proposition \ref{p.4.1} hold by selecting appropriate $\alpha$, $\theta$ and $\eta$. Above all, we list some conditions which will be used frequently in   our technical lemmas.
\begin{enumerate}[start=1,label=\textrm{$\Pi$.\arabic*}]
\item \label{list1} \ $1-H<\alpha<\frac 1q$,\ \ $\alpha+\gamma<\frac 1q$,\  \ $\frac 1p<\theta<H+\alpha-\frac 12$\,;
\item \label{list2} \  $\theta>1+\alpha-\frac 2q+2\eta$, \ \ $\eta>\gamma$\,;
\item \label{list3} \ $\alpha+\eta>\frac 1q$,\ \ $\eta>\gamma$ \,;
\item \label{list4} \ $\alpha+\eta<\frac 1q$,\ \ $\eta>\gamma$\,.
\end{enumerate}
Throughout Appendix \ref{Appen.C}, we always assume $p>\frac 1H$ and $\gamma<H-\frac 1p$.


\begin{lem}\label{I_{2,k}^{1}}
Suppose $\alpha,\ \theta$ satisfy $\eqref{list1}$ and
\begin{equation}
	\begin{cases}
		\eta_1 \text{ satisfies } \eqref{list2} \text{ and }  \eqref{list3};\\
		\eta_2 \text{ satisfies } \eqref{list2} \text{ and }  \eqref{list4};\\
		\eta_3 \text{ satisfies } \eqref{list3}\,.\label{Cond_AppeC1}
	\end{cases}
\end{equation}
Then   $\cI_{2,k}^{(1)}(t,h),\ k=1,2,3,4$ in \eqref{Est.I2(1)} can be bounded by $|h|^{\gamma q}$.
\end{lem}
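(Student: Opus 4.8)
The plan is to bound each of the four integrals $\cI_{2,k}^{(1)}(t,h)$ appearing in the decomposition \eqref{Est.I2(1)} by a multiple of $|h|^{\gamma q}$, using scaling (change of variables) in the space variable $z$ and then explicit integration in the time variable $r$. In every case the strategy is the same: after factoring out the appropriate power of $|h|$, change variables to split the integral into a product of a one-dimensional $r$-integral over $[0,t]\subset[0,T]$ and a convergent one-dimensional $z$-integral, then check that the $r$-integral converges (using the constraints on $\alpha,\theta,\eta_k$) and produces a power of $|h|$ at least $\gamma q$.

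\medskip

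For $\cI_{2,1}^{(1)}(t,h)=\int_0^t\int_{\RR}(r+h)^{q(\theta-1)}|r+|z||^{(-\alpha-\eta_1)q}h^{\eta_1 q}\,dzdr$, I would change variables $z\to rz$ to obtain $h^{\eta_1 q}\int_0^t (r+h)^{q(\theta-1)}r^{1-(\alpha+\eta_1)q}\,dr$ times $\int_0^\infty(1+|z|)^{(-\alpha-\eta_1)q}\,dz$; the $z$-integral is finite precisely when $(\alpha+\eta_1)q>1$, which is \ref{list3}, and $(r+h)^{q(\theta-1)}\le r^{q(\theta-1)}$ reduces the $r$-integral to $\int_0^t r^{q(\theta-1)+1-(\alpha+\eta_1)q}\,dr$, finite when $q(\theta-1)+1-(\alpha+\eta_1)q>-1$, i.e. $\theta>1+\alpha+\eta_1-2/q$, which follows from \ref{list2} since $\eta_1<2\eta_1$... here I would rather bound $(r+h)^{q(\theta-1)}$ more carefully so that no power of $r$ is lost near $0$, pulling out a factor $h^{q(\theta-1)+1}$ by the substitution $r\to hr$ when needed; the upshot is a bound $|h|^{\eta_1 q}$ or $|h|^{q(\theta-1)+2-(\alpha+\eta_1)q}$, and one checks $\min$ of the exponents is $\ge\gamma q$ using $\eta_1>\gamma$ from \ref{list3} and the $\theta$-lower-bound from \ref{list2}. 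The term $\cI_{2,2}^{(1)}$ is handled the same way on the region $A_1=\{|z|<r\}$ where $|r-|z||$ stays away from $0$ only after scaling $z\to rz$ on $|z|<1$; here the $z$-integral $\int_0^1|1-z|^{(-\alpha-\eta_2)q}\,dz$ converges precisely when $(\alpha+\eta_2)q<1$, which is \ref{list4}, and the $r$-integration is as before. The term $\cI_{2,3}^{(1)}$, on $A_2=\{|z|>r+2h\}$, is treated identically to $\cI_{2,1}^{(1)}$ after the substitution $z\to (r+h)z$, with the relevant $z$-integral $\int_1^\infty(z-1)^{(-\alpha-\eta_3)q}\,dz$ finite under $(\alpha+\eta_3)q>1$, which is \ref{list3}.

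\medskip

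The genuinely delicate term is $\cI_{2,4}^{(1)}$, the integral over $A_3=\{r<|z|<r+2h\}$ of $(r+h)^{q(\theta-1)}\big||r+h-|z||^{-\alpha}+|r-|z||^{-\alpha}\big|^q$: here there is no small parameter pulled out in front, and the integrand has a non-integrable-looking singularity $|r-|z||^{-\alpha}$ at $|z|=r$. The key observation is that the $z$-domain has width $O(h)$, so one splits $\int_r^{r+2h}$ according to whether $|z|$ is within distance $h$ of $r$ (or of $r+h$) or not; on the bad sliver $\int_r^{r+2h}|r-|z||^{-\alpha q}\,dz\lesssim h^{1-\alpha q}$ using $\alpha q<1$ from \ref{list1}, and similarly for the $|r+h-|z||$ term, while the rest contributes a bounded amount times $h$. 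Then $\int_0^t(r+h)^{q(\theta-1)}h^{1-\alpha q}\,dr$ is estimated by separating $r<h$ (giving an extra $h^{q(\theta-1)+1}$ from $\int_0^h r^{q(\theta-1)}\,dr$) and $r>h$ (where $(r+h)^{q(\theta-1)}\lesssim r^{q(\theta-1)}$ and $\int_h^T r^{q(\theta-1)}\,dr$ is bounded provided $q(\theta-1)+1>0$, i.e. $\theta>1-1/q$, which holds by \ref{list1} since $\theta>1/p=1-1/q$). Collecting, $\cI_{2,4}^{(1)}(t,h)\lesssim h^{q(\theta-1)+2-\alpha q}+h^{1-\alpha q}$, and one verifies $q(\theta-1)+2-\alpha q\ge\gamma q$ and $1-\alpha q\ge\gamma q$ follow from $\alpha+\gamma<1/q$ and $\theta>1+\alpha-2/q+\gamma$ (implied by \ref{list2} with $\eta_k>\gamma$), completing the proof. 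This last term is where all the care is needed; the other three are routine scaling arguments.
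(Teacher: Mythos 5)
Your overall strategy --- factor out the power of $|h|$, rescale in $z$, integrate in $r$, and check the resulting exponents against the constraints \eqref{list1}--\eqref{list4} --- is exactly the paper's, and your treatments of $\cI_{2,1}^{(1)}$, $\cI_{2,2}^{(1)}$ and $\cI_{2,4}^{(1)}$ are correct. (For $\cI_{2,4}^{(1)}$ your bound $|h|^{1-\alpha q}$, obtained from the width-$O(h)$ sliver $A_3$ together with $\alpha q<1$, is in fact more transparent than the paper's, which inserts an extra factor of $h$ to reach $|h|^{2-\alpha q}$; either exponent exceeds $\gamma q$ under $\alpha+\gamma<\frac 1q$ and $\theta>\frac 1p$.)

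The one concrete misstep is in $\cI_{2,3}^{(1)}$. The integral you name, $\int_1^\infty(z-1)^{(-\alpha-\eta_3)q}\,dz$, is \emph{divergent} at its lower endpoint precisely under the condition $(\alpha+\eta_3)q>1$ that you invoke: a singularity $(z-1)^{-\beta}$ with $\beta>1$ is not integrable at $z=1$, so this term cannot be "treated identically to $\cI_{2,1}^{(1)}$". What actually saves it is that on $A_2=\{|z|>r+2h\}$ the quantity $|z|-r-h$ is bounded below by $h$; substituting $w=|z|-r-h$ gives $\int_h^\infty w^{-(\alpha+\eta_3)q}\,dw\simeq h^{1-(\alpha+\eta_3)q}$, which combined with the prefactor $h^{\eta_3 q}$ yields $\cI_{2,3}^{(1)}(t,h)\lesssim h^{1-\alpha q}\int_0^t r^{q(\theta-1)}\,dr\simeq h^{1-\alpha q}$. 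In particular the final power of $h$ for this term is $1-\alpha q$, not $\eta_3 q$, and the binding conditions are $\alpha+\eta_3>\frac 1q$, $\theta>\frac 1p$ and $\alpha+\gamma<\frac 1q$ from \eqref{list1} --- the inequality $\eta_3>\gamma$ plays no role here. This is exactly how the paper handles the term; the repair is immediate, but the step as you wrote it asserts convergence of a divergent integral and records the wrong exponent.
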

\begin{proof}
For $\cI_{2,1}^{(1)}(t, h)$, since $(r+h)^{q(\theta-1)}\leq r^{q(\theta-1)}$ it can be bounded by
\begin{align*}
	\cI_{2,1}^{(1)}(t, h) \ls\,&h^{\eta_1 q} \cdot \int_0^t\int_{\mathbb{R}} r^{q(\theta-1)} \cdot r^{1-(\alpha+\eta_1)q}|1+\tilde{z}|^{(-\alpha-\eta_1)q} d\tilde{z}dr \\
	\simeq \,&h^{\eta_1 q} \cdot \int_0^t r^{q(\theta-1)} \cdot r^{1-(\alpha+\eta_1)q} dr\,\simeq \,h^{\eta_1 q}\,\leq\, h^{\gamma q}
\end{align*}
where we  require   $\eta_1$ satisfy
\[
\eta_1>\gamma\,,\quad (\alpha+\eta_1)q>1
\,,\quad q(\theta-1)+1-(\alpha+\eta_1)q>-1\,,
\]
 which is
\begin{equation}\label{condi_eta_1}
 \eta_1>\gamma\,,~\alpha+\eta_1>\frac 1q\,,~ \theta>1+\alpha-\frac 2q+\eta_1 \,.
\end{equation}
Similarly, for $\cI_{2,2}^{(1)}(t, h) $  we have
\begin{align*}
	\cI_{2,2}^{(1)}(t, h) \simeq \,&h^{\eta_2 q}\cdot\int_0^t\int_0^{r} r^{q(\theta-1)} (r-z)^{(-\alpha-\eta_2)q} dzdr \\
	\simeq \,& h^{\eta_2 q}\cdot\int_0^t r^{q(\theta-1)} r^{1-(\alpha+\eta_2)q} dr \,\simeq\, h^{\eta_2 q}\,\leq\, h^{\gamma q} \,,
\end{align*}
if we require
\begin{equation}\label{condi_eta_2}
\eta_2>\gamma, \ \frac 1q>\alpha+\eta_2,\ \theta>1+\alpha-\frac 2q+\eta_2.
\end{equation}
For $\cI_{2,3}^{(1)}(t, h) $  we have
\begin{align*}
\cI_{2,3}^{(1)}(t, h) 	\simeq\,&h^{\eta_3 q}\cdot\int_0^t\int_{h}^{\infty} r^{q(\theta-1)} z^{(-\alpha-\eta_3)q}dzdr  \\
	\simeq\,& h^{\eta_3 q}\cdot\int_0^t r^{q(\theta-1)} h^{1-(\alpha+\eta_3)q} dr \,\simeq\,h^{1-\alpha q}\,\leq\, h^{\gamma q} \,,
\end{align*}
under conditions
\begin{equation}\label{condi_eta_3}
\frac 1q>\gamma+\alpha, \ \alpha+\eta_3>\frac 1q ,\  \theta>1-\frac 1q=\frac 1p.
\end{equation}
For the last term  $\cI_{2,4}^{(1)}(t, h)  $   we have
\begin{align*}
\cI_{2,4}^{(1)}(t, h) 	\ls\,& h\cdot \int_0^t  r^{q(\theta-1)}\cdot \int_{\mathbb{R}}\lk |r+h-|z||^{-\alpha q}+||z|-r|^{-\alpha q}\rk \cdot \1_{A_3}dzdr \\
	\simeq\,& h\cdot \int_0^t  r^{q(\theta-1)}\cdot\lk 2\int_0^h z^{-\alpha q}dz+ \int_0^{2h} z^{-\alpha q}dz\rk dr \,\\
	\leq\,& h^{2-\alpha q}\cdot \int_0^t  r^{q(\theta-1)} dr\,\simeq\,h^{2-\alpha q}\leq h^{\gamma q}
\end{align*}
if we set
\begin{equation}\label{condi_eta_4}
\alpha <\frac 1q=1-\frac 1p \,,\quad \theta>1-\frac 1q=\frac 1p,\quad \alpha+\gamma<\frac 1q<\frac 2q.
\end{equation}
Notice that once $\alpha,\ \theta$ satisfy $\eqref{list1}$ and $\eta_1$, $\eta_2$ and $\eta_3$ satisfy \eqref{Cond_AppeC1}, then the conditions \eqref{condi_eta_1}-\eqref{condi_eta_4} hold automatically. The proof is complete.
\end{proof}
\begin{rmk}
	We remark here that the conditions \eqref{Cond_AppeC1} for  $\alpha$,   $\theta$, $\eta$'s  are compatible with $p>\frac 1H$ and $\gamma<H-\frac 1p$.   Let us summarize all the restrictions in Lemma \ref{I_{2,k}^{1}}:
	\begin{enumerate}
		\item $p>\frac 1H$,\ \ $\gamma<H-\frac 1p$\,;
		\item $1-H<\alpha<\frac 1q$,\ \ $\alpha+\gamma<\frac 1q$,\  \ $\frac 1p<\theta<H+\alpha-\frac 12$\,;
		\item $\eta_1>\gamma$, \ \ $\theta>1+\alpha-\frac 2q+2\eta_1$, \ \ $\alpha+\eta_1>\frac 1q$\,;
		\item $\eta_2>\gamma$, \ \ $\theta>1+\alpha-\frac 2q+2\eta_2$, \ \ $\alpha+\eta_2<\frac 1q$\,;
		\item $\eta_3>\gamma$, \ \ $\alpha+\eta_3<\frac 1q$\,.
	\end{enumerate}
	For any (fixed) $p>\frac 1H$, we can choose for (small enough) $\ep_k>0$ $k=1,\dots,6$
	\begin{align*}
		\gamma=H-\frac 1p-\ep_1\,,~\alpha=&1-H+\ep_2\,,~\theta=H-\ep_3\,,\\
		\eta_1=H-\frac 1p-\ep_4\,,~\eta_2=&H-\frac 1p-\ep_5\,,~\eta_3=H-\frac 1p-\ep_6\,.
	\end{align*}
	For  arbitrary (small enough) $\ep>0$  let 
	\[
	\ep_1=7\ep\,,\ep_2=4\ep\,,\ep_3=\ep\,,\ep_4=3\ep\,,\ep_5=6\ep\,,\ep_6=6\ep\,.
	\]
	Then all the restrictions (1)-(5)  are satisfied with $\gamma$ arbitrarily close  to   $H-\frac 1p$. The following lemmas can be verified similarly. We omit the details.
\end{rmk}

\begin{lem}\label{Lem.I_{2,5+6}^2}
Suppose $\alpha,\ \theta$ satisfy $\eqref{list1}$ and
\begin{equation}
	\eta_4,\eta_5 \text{ satisfy } \eqref{list2} \text{ and }  \eqref{list3}\,.\label{Cond_AppeC2}
\end{equation}
Then the terms $\cI_{2,5}^{(2)}(t, h)$ and $\cI_{2,6}^{(2)}(t, h)$ in equation \eqref{Ineq_I_2,k^2} can be bounded by $|h|^{\gamma q}$.

\end{lem}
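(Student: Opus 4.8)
The plan is to bound $\cI_{2,5}^{(2)}(t,h)$ and $\cI_{2,6}^{(2)}(t,h)$ by extracting from $M_5^{(2)}$ and $M_6^{(2)}$ the explicit power of $|h|$ they carry, performing the scaling $z\mapsto rz$ in the spatial integral, and reducing each quantity to a one-dimensional integral in $r$ that is finite and $\lesssim_T 1$ precisely under the hypotheses \ref{list1}, \ref{list2} and \ref{list3}.

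By \eqref{Ineq_I_2,k^2} and \eqref{Ineq_C_{1-alpha}}---keeping, in $M_6^{(2)}$, the decay $(r^2+z^2)^{-\alpha q/2}$ that accompanies $\cos(\alpha\tan^{-1}(|z|/r))$ inside $\cC_{1-\alpha}$, i.e. reading $M_6^{(2)}(r,z)=|z|^{\eta_5 q}|h|^{\eta_5 q}(r^2+z^2)^{-(\eta_5+\alpha/2)q}$---one has
\[
\cI_{2,5}^{(2)}(t,h)=|h|^{\eta_4 q}\int_0^t\int_{\RR}(r+h)^{q(\theta-1)+\eta_4 q}(r^2+z^2)^{-(\frac{\alpha}{2}+\eta_4)q}\,dzdr
\]
and
\[
\cI_{2,6}^{(2)}(t,h)=|h|^{\eta_5 q}\int_0^t\int_{\RR}(r+h)^{q(\theta-1)}|z|^{\eta_5 q}(r^2+z^2)^{-(\eta_5+\frac{\alpha}{2})q}\,dzdr.
\]
Since \ref{list1} forces $\theta<H+\alpha-\frac12<1$, I would bound $(r+h)^{q(\theta-1)+\eta_4 q}\le (1+T)^{\eta_4 q}\,r^{q(\theta-1)}$ (and likewise $(r+h)^{q(\theta-1)}\le r^{q(\theta-1)}$ in $\cI_{2,6}^{(2)}$) and then rescale $z=r\tilde z$. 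For $\cI_{2,5}^{(2)}$ this produces $r^{\,1-(\alpha+2\eta_4)q}\int_{\RR}(1+\tilde z^2)^{-(\frac{\alpha}{2}+\eta_4)q}\,d\tilde z$, and the $\tilde z$-integral is finite because $(\alpha+2\eta_4)q>(\alpha+\eta_4)q>1$ by \ref{list3}; for $\cI_{2,6}^{(2)}$ it produces $r^{\,1-(\alpha+\eta_5)q}\int_{\RR}|\tilde z|^{\eta_5 q}(1+\tilde z^2)^{-(\eta_5+\frac{\alpha}{2})q}\,d\tilde z$, finite at the origin since $\eta_5 q>0$ and at infinity since the integrand decays like $|\tilde z|^{-(\alpha+\eta_5)q}$ with $\alpha+\eta_5>\frac1q$ by \ref{list3}.

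It then remains to check that $\int_0^t r^{\,q(\theta-1)+1-(\alpha+2\eta_4)q}\,dr$ and $\int_0^t r^{\,q(\theta-1)+1-(\alpha+\eta_5)q}\,dr$ converge (the only issue being at $r=0$): the first needs $\theta>1+\alpha-\frac2q+2\eta_4$, which is exactly \ref{list2} for $\eta_4$, and the second needs $\theta>1+\alpha-\frac2q+\eta_5$, which is weaker than \ref{list2} for $\eta_5$ since $\eta_5>0$. Collecting the bounds one gets $\cI_{2,5}^{(2)}(t,h)\le C_{T,p,\theta,\alpha}|h|^{\eta_4 q}$ and $\cI_{2,6}^{(2)}(t,h)\le C_{T,p,\theta,\alpha}|h|^{\eta_5 q}$, and since $\eta_4,\eta_5>\gamma$ and $|h|<1$ these are $\le C_{T,p,\theta,\alpha}|h|^{\gamma q}$, which is the claim.

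I do not expect a serious obstacle here---the computation is of the same routine flavor as Lemma \ref{I_{2,k}^{1}}. The one point requiring care is the bookkeeping of the exponents at $z=\pm\infty$: it is precisely the $(r^2+z^2)^{-\alpha q/2}$ decay inherited from $\cC_{1-\alpha}$ that yields the admissible exponent $(\alpha+\eta_5)q$ there and so lets \ref{list3} suffice (without it one would be forced to the unattainable $\eta_5 q>1$). The compatibility of \ref{list1}--\ref{list4} with $p>\frac1H$ and $\gamma<H-\frac1p$ is the one already verified in the Remark following Lemma \ref{I_{2,k}^{1}}, so no new parameter restriction appears.
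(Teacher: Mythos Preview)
Your argument is correct and follows essentially the same route as the paper. The only cosmetic difference is in $\cI_{2,5}^{(2)}$: the paper splits $(r+h)^{\eta_4 q}\le r^{\eta_4 q}+h^{\eta_4 q}$ and handles two $r$--integrals, whereas you bound $(r+h)^{\eta_4 q}\le (1+T)^{\eta_4 q}$ directly; both reductions lead to the same controlling condition $\theta>1+\alpha-\tfrac{2}{q}+2\eta_4$ from \ref{list2}. Your observation that the $(r^2+z^2)^{-\alpha q/2}$ factor must be retained in $M_6^{(2)}$ is exactly what the paper uses in its proof (even though it is not displayed in the list \eqref{Ineq_C_{1-alpha}}), and your check that $(\alpha+\eta_5)q>1$ via \ref{list3} is the right integrability criterion at $|\tilde z|\to\infty$.
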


\begin{proof}
For the term $\cI_{2,5}^{(2)}(t, h)$, from inequality  \eqref{r2_regular} and $(r+h)^{\eta q}\leq r^{\eta q}+h^{\eta q}$  it follows
\begin{align*}
 \cI_{2,5}^{(2)}(t, h) \ls &\int_0^t\int_{\mathbb{R}} (r+h)^{q(\theta-1)} \lt|\Delta_h\lt(r^2+|z|^2\rt)^{-\frac{\alpha}{2}}\rt|^{q} dzdr\\
	\ls&h^{\eta_4q}\int_0^t\int_{\mathbb{R}}(r+h)^{q(\theta-1)}\lt(r^2+z^2\rt)^{-(\frac{\alpha}{2}+\eta_4)q}(r+h)^{\eta_4q}dzdr \\
\ls&\, h^{\eta_4q}\cdot\int_0^t r^{q(\theta-1)+1-(\al+\eta_4)q}dr \cdot\int_{\mathbb{R}}\lt(1+z^2\rt)^{-(\frac{\alpha}{2}+\eta_4)q}dz  \\
&+ h^{2\eta_4q}\cdot\int_0^t r^{q(\theta-1)+1-(\al+2\eta_4)q}dr \cdot\int_{\mathbb{R}}\lt(1+z^2\rt)^{-(\frac{\alpha}{2}+\eta_4)q}dz\ls\ h^{q\gamma}
\end{align*}
if $\eta_4$ satisfies the following conditions
\begin{equation}\label{Condi_I_22^4}
\eta_4>\gamma\,,\quad \theta-2\eta_4>1+\alpha-\frac 2q\,,\quad \alpha+2\eta_4>\frac 1q \,.
\end{equation}

Now we deal with   $\cI_{2,6}^{(2)}(t, h) $. For fixed $\eta\in(0,1)$ by \eqref{Ineq_imp_2} and then
  by changing of variable $z\to rz$,
\begin{align*}
\cI_{2,6}^{(2)}(t, h) \ls &\int_0^t\int_{\mathbb{R}} (r+h)^{q(\theta-1)}\frac{|z|^{\eta_5q}|h|^{\eta_5q}}{(r^2+z^2)^{\eta_5q}}\lt(r^2+z^2\rt)^{-\frac{\alpha}{2}q}dzdr\\
\ls&h^{\eta_5q}\int_0^t\int_{\mathbb{R}} r^{q(\theta-1)}\frac{|z|^{\eta_5q}}{(r^2+z^2)^{\eta_5q}}\lt(r^2+z^2\rt)^{-\frac{\alpha}{2}q}dzdr\\
=&h^{\eta_5q}\int_0^tr^{q(\theta-1)-\eta_5q-\alpha q+1}dr\cdot\int_{\mathbb{R}}\frac{|z|^{\eta_5q}}{(1+z^2)^{\eta_5q}}\lt(1+z^2\rt)^{-\frac{\alpha}{2}q}dz,
\end{align*}
which can be bounded by $h^{\gamma q}$ under conditions \eqref{condi_eta_1} with $\eta_1$ replaced with $\eta_5$, i.e.
\begin{equation}\label{Condi_I_22^4_2}
 \eta_5>\gamma\,,~\alpha+\eta_5>\frac 1q\,,~ \theta>1+\alpha-\frac 2q+\eta_5 \,.
\end{equation}
Therefore, under conditions \eqref{Condi_I_22^4} and \eqref{Condi_I_22^4_2},  we have  for $k=5,6$,
\[
\sup_{t}\cI_{2,k}^{(2)}(t, h) \ls
|h|^{\gamma q}\,.
\]
Notice that once $\alpha,\ \theta$ satisfy $\eqref{list1}$ and $\eta_4$, $\eta_5$ satisfy \eqref{Cond_AppeC2}, then the conditions \eqref{Condi_I_22^4}-\eqref{Condi_I_22^4_2} hold automatically. The proof is complete.
\end{proof}

\begin{lem}\label{J_{1,k}^{(1)}}
Suppose $\alpha,\ \theta$ satisfy $\eqref{list1}$ and
\begin{equation}
	\begin{cases}
		\eta_1 \text{ satisfies } \eqref{list2} \text{ and }  \eqref{list3};\\
		\eta_2 \text{ satisfies } \eqref{list3};\\
		\eta_3 \text{ satisfies } \eqref{list2} \text{ and }  \eqref{list4}\,.\label{Cond_AppeC3}
	\end{cases}
\end{equation}
Then the term $\cJ_{1,k}^{(1)}(t,x,y)$ in \eqref{Ineq_J_1,k^1} can be bounded as $$\sup\limits_{t,x,y}\cJ_{1,k}^{(1)}(t,x,y)\ls C_{T,p,H,\gamma}|\hbar|^{\gamma q}\quad \hbox{
for $\ k=1,2,3$.}$$
\end{lem}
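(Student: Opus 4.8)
The plan is to estimate each of the three pieces $\cJ_{1,k}^{(1)}(t,x,y)$, $k=1,2,3$, by the direct substitution of the pointwise bounds in \eqref{Est.iv1} into the defining integral $\int_\hbar^t\int_\RR r^{q(\theta-1)}N_{1,k}^{(1)}(t,x,y)\,dzdr$, and then reducing each resulting integral to a product of a one-dimensional $r$-integral and a one-dimensional $z$-integral via a scaling change of variable. Recall that $N_{1,1}^{(1)}$ carries the factor $|r+|z||^{-(\alpha+\eta_1)q}$, $N_{1,2}^{(1)}$ carries $|r-|z||^{-(\alpha+\eta_2)q}\1_{B_1\cup B_2}$ together with $|r-\hbar-|z||^{-(\alpha+\eta_3)q}\1_{B_3}$, and $N_{1,3}^{(1)}$ carries $|(r-|z+\hbar|)^{-\alpha}+(r-|z|)^{-\alpha}|^q\1_{B_4\cup B_5}$, where the sets $B_j$ are given in \eqref{Def.B_set}. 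In each case the implicit constant $|\hbar|^{\eta_i q}$ (with $i$ the relevant index) is pulled out in front, so the task is to show the remaining double integral is $\ls |\hbar|^{(\gamma-\eta_i)q}$ for $\gamma<\eta_i$, i.e. that it grows at worst like a negative power of $\hbar$ matching the deficit.

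First I would treat $\cJ_{1,1}^{(1)}$: after the change of variable $z\to rz$ the $z$-integral $\int_\RR (1+|z|)^{-(\alpha+\eta_1)q}\,dz$ converges precisely when $(\alpha+\eta_1)q>1$, which is condition \ref{list3}; what remains is $|\hbar|^{\eta_1 q}\int_\hbar^t r^{q(\theta-1)+1-(\alpha+\eta_1)q}\,dr$, and this $r$-integral is finite near $0$ when $q(\theta-1)+2-(\alpha+\eta_1)q>0$, i.e. $\theta>1+\alpha-\tfrac2q+\eta_1$, hence is dominated by the value at the upper endpoint $t\le T$; combining gives $\cJ_{1,1}^{(1)}\ls |\hbar|^{\eta_1 q}\ls|\hbar|^{\gamma q}$ since $\eta_1>\gamma$. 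This is exactly the computation already carried out for $\cI_{2,1}^{(1)}$ in Lemma \ref{I_{2,k}^{1}}, so I would simply point to that. For $\cJ_{1,2}^{(1)}$, the contribution over $B_1\cup B_2$ (where $|z|$ is bounded away from $r$ by at least $\hbar$) is handled as above but the $z$-integral is instead controlled by integrating $|r-|z||^{-(\alpha+\eta_2)q}$ over $|z|>r+\hbar$; the singularity at $|z|=r$ is cut off at distance $\hbar$, so $\int_{|z|>r+\hbar}|r-|z||^{-(\alpha+\eta_2)q}\,dz \ls \hbar^{1-(\alpha+\eta_2)q}$ when $(\alpha+\eta_2)q>1$ (condition \ref{list3} for $\eta_2$), and the leftover $\int_\hbar^t r^{q(\theta-1)}\,dr$ is finite near $0$ by \ref{list1} ($\theta>1/p$). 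This yields $|\hbar|^{\eta_2 q}\cdot\hbar^{1-(\alpha+\eta_2)q}=\hbar^{1-\alpha q}\ls\hbar^{\gamma q}$ using $\alpha+\gamma<1/q$ from \ref{list1}. The complementary piece over $B_3$ uses the shifted bound $|r-\hbar-|z||^{-(\alpha+\eta_3)q}$; after substituting and using that on $B_3$ the variable ranges over $-r+\hbar<z<r-\hbar$, the $z$-integral of this shifted singularity is again finite (it is an $L^1$ singularity when $(\alpha+\eta_3)q<1$, which is \ref{list4}), and one recovers $|\hbar|^{\eta_3 q}\ls|\hbar|^{\gamma q}$ with $\eta_3>\gamma$ and the $r$-integral convergent under \ref{list2}. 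Finally $\cJ_{1,3}^{(1)}$ is supported on $B_4\cup B_5$, two intervals of length $\hbar$ near $z=\pm r$; there one estimates directly $\int_{B_4\cup B_5}|(r-|z+\hbar|)^{-\alpha}+(r-|z|)^{-\alpha}|^q\,dz\ls \int_0^{2\hbar}w^{-\alpha q}\,dw \ls \hbar^{1-\alpha q}$ (requiring $\alpha q<1$, in \ref{list1}), then $\int_\hbar^t r^{q(\theta-1)}\,dr<\infty$, giving $\cJ_{1,3}^{(1)}\ls \hbar^{1-\alpha q}\ls \hbar^{\gamma q}$, again by $\alpha+\gamma<1/q$. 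This is the analogue of $\cI_{2,4}^{(1)}$ in Lemma \ref{I_{2,k}^{1}}.

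The main obstacle is bookkeeping rather than analysis: one must keep careful track of which of the four $\eta$-indices governs each piece, verify that the domains $B_j$ in \eqref{Def.B_set} place the singularities of $(r\pm|z|)^{-\alpha}$ (and their shifts by $\hbar$) exactly where claimed in \eqref{Ineq_important}, and check in each case that the exponent in the $z$-integral lands strictly on the convergent side — either $>1$ when we exploit the $\hbar$-cutoff to gain a positive power of $\hbar$, or $<1$ when we need local integrability — and simultaneously that the $r$-exponent $q(\theta-1)+1-(\text{stuff})q$ exceeds $-1$ near $r=0$. All of these are precisely the inequalities packaged into \ref{list1}–\ref{list4} and assembled in \eqref{Cond_AppeC3}, and (as noted in the remark following Lemma \ref{I_{2,k}^{1}}) they are jointly compatible with $p>1/H$ and $\gamma<H-1/p$, with $\gamma$ arbitrarily close to $H-1/p$; since each of the three estimates is either literally one of the computations in Lemma \ref{I_{2,k}^{1}} or a minor variant of it, I would present the $\cJ_{1,2}^{(1)}$ case (which contains the new $B_3$ shift) in full and dispatch $\cJ_{1,1}^{(1)}$ and $\cJ_{1,3}^{(1)}$ by reference.
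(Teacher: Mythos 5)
Your proposal is correct and follows essentially the same route as the paper: substitute the pointwise bounds from \eqref{Est.iv1}, split $\cJ_{1,2}^{(1)}$ according to the domains $B_1,B_2,B_3$ and $\cJ_{1,3}^{(1)}$ over $B_4\cup B_5$, and in each case reduce to a one-dimensional $z$-integral (cut off at distance $\hbar$ from the singularity, or locally integrable under \ref{list4}) times an $r$-integral convergent under \ref{list1}--\ref{list2}, yielding $\hbar^{\eta_i q}$ or $\hbar^{1-\alpha q}$, both $\lesssim\hbar^{\gamma q}$. The exponent bookkeeping and the assignment of \ref{list2}--\ref{list4} to $\eta_1,\eta_2,\eta_3$ match the paper's computation.
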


\begin{proof}
Similar to  the proof of $\cI_{2,1}^{(1)}$ in  part \textbf{(i)} of Proposition \ref{p.4.1}, $\cJ_{1,1}^{(1)}(t,x,y)$ can be bounded by $|\hbar|^{\gamma q}$ under  the same condition as   \eqref{condi_eta_1} which is implied by conditions on  $\eta_1$ in \eqref{Cond_AppeC3}.

Now we deal with $\cJ_{1,2}^{(1)}(t,x,y)$.  By triangle inequality
\begin{align}\label{iv1est}
\cJ_{1,2}^{(1)}(t,x,y)=&\int_{\hbar}^t\int_{\RR}r^{q(\theta-1)}\lt|\fD_{\hbar}(r-|z|)^{-\alpha}\rt|^q\cdot\lt(\1_{B_1}+\1_{B_2}+\1_{B_3}\rt)dzdr\nonumber\\
\leq&\int_{\hbar}^t\int_{z<-r-\hbar}r^{q(\theta-1)}|r-|z||^{(-\alpha-\eta_2)q}\hbar^{\eta_2q}dzdr\nonumber\\
&+\int_{\hbar}^t\int_{z>r+\hbar}r^{q(\theta-1)}|r-|z||^{(-\alpha-\eta_2)q}\hbar^{\eta_2q}dzdr\nonumber\\
&  +\int_{\hbar}^t\int_{-r+\hbar}^{r-\hbar}r^{q(\theta-1)}|r-\hbar-|z||^{(-\alpha-\eta_3)q}\hbar^{\eta_3q}dzdr \nonumber\\
=:&\sum_{j=1}^3\cJ_{1,2,j}^{(1)}(t,x,y)\,.,
\end{align}
where $B_1$, $B_2$ and $B_3$ are defined by \eqref{Def.B_set}.

For the term $\cJ_{1,2,1}^{(1)}(t,x,y)$  in \eqref{iv1est}, we have
\begin{align*}
\cJ_{1,2,1}^{(1)}(t,x,y)\simeq\,&\hbar^{\eta_2 q}\int_0^tr^{q(\theta-1)}\int_{z>\hbar}z^{-(\alpha+\eta_2)q}dzdr\\
\simeq\,& \hbar^{1-(\alpha+\eta_2) q+\eta_2 q}\int_0^tr^{q(\theta-1)}dr
\simeq\, \hbar^{1-\alpha q}\lesssim \hbar^{\gamma q}\,,
\end{align*}
under  the same conditions as \eqref{condi_eta_3}:
\begin{equation}\label{condi_eta2_iv}
	\alpha+\gamma<\frac 1q\,,\ \alpha+\eta_2>\frac 1q\,,\ \theta>\frac 1p\,.
\end{equation}
Similar to $\cJ_{1,2,1}^{(1)}(t,x,y)$,
if the conditions in \eqref{condi_eta2_iv} hold,
then we have
\begin{align*}
\cJ_{1,2,1}^{(1)}(t,x,y)&=\int_{\hbar}^t\int_{r+\hbar}^{+\infty}r^{q(\theta-1)}(z-r)^{(-\alpha-\eta_2)q}\hbar^{\eta_2q}dzdr \\
&=\hbar^{\eta_2q}\int_{\hbar}^t\int_{\hbar}^{+\infty}r^{q(\theta-1)}z^{-(\alpha+\eta_2)q}dzdr \\
&\simeq \hbar^{1-\alpha q}\int_0^tr^{q(\theta-1)}dr
\lesssim \hbar^{\gamma q}\,.
\end{align*}

To estimate $\cJ_{1,2,3}^{(1)}(t,x,y)$  in \eqref{iv1est}, letting $\eta_3$ satisfy  the conditions
\eqref{condi_eta_2} with $\eta_2$ replaced by
  $\eta_3$, namely,
\begin{equation}\label{condi_eta3_iv}
\eta_3>\gamma, \ \frac 1q>\alpha+\eta_3,\ \theta>1+\alpha-\frac 2q+\eta_3\,,
\end{equation}
    we have
\begin{align*}
\cJ_{1,2,3}^{(1)}(t,x,y)
=&\int_{\hbar}^t\int_{-r+\hbar}^{0}r^{q(\theta-1)}|r-\hbar+z|^{(-\alpha-\eta_3)q}\hbar^{\eta_3q}dzdr \\
&+\int_{\hbar}^t\int_{0}^{r-\hbar}r^{q(\theta-1)}|r-\hbar-z|^{(-\alpha-\eta_3)q}\hbar^{\eta_3q}dzdr\\
\simeq\, & \hbar^{\eta_3q}\cdot\int_{\hbar}^t\int_{0}^{r-\hbar}r^{q(\theta-1)}z^{(-\alpha-\eta_3)q}dzdr\\
\lesssim\, &\hbar^{\eta_3q}\cdot\int_{0}^t r^{1-(\alpha+\eta_3)q+q(\theta-1)}dr
\lesssim \hbar^{\eta_3q}\lesssim\hbar^{\gamma q}\,.
\end{align*}

Now we proceed to deal with $\cJ_{1,3}^{(1)}(t,x,y)$ in \eqref{Est.iv1}.   By the   similar way as  dealing with $\cJ_{1,2}^{(1)}(t,x,y)$,  we have
with   $B_4$ and  $B_5$   defined by \eqref{Def.B_set}.
\begin{align*}
\int_{\hbar}^t\int_{\RR}r^{q(\theta-1)}&\lt|(r-|z+\hbar|)^{-\alpha}+(r-|z|)^{-\alpha}\rt|^q\cdot\lt( \1_{B_4}+\1_{B_5} \rt)dzdr\\
=&\int_{\hbar}^t\int_{-r-\hbar}^{-r+\hbar} r^{q(\theta-1)}\lt(\lt|r-|z+\hbar|\rt|^{-\alpha q}+\lt|r-|z|\rt|^{-\alpha q} \rt)dzdr\\
&+\int_{\hbar}^t\int_{r-\hbar}^{r+\hbar} r^{q(\theta-1)}\lt(\lt|r-|z+\hbar|\rt|^{-\alpha q}+\lt|r-|z|\rt|^{-\alpha q}\rt) dzdr\\
\simeq&\  \int_{\hbar}^t \int_{-\hbar}^{\hbar}r^{q(\theta-1)}|z|^{-\alpha q} dzdr+ \int_{\hbar}^t \int_{0}^{2\hbar}r^{q(\theta-1)}|z|^{-\alpha q} dzdr \\
\lesssim &\ \hbar^{1-\alpha q}\int_0^tr^{q(\theta-1)}dr\lesssim \hbar^{1-\alpha q}\lesssim \hbar^{\gamma q}\,,
\end{align*}
under  the same conditions as \eqref{condi_eta_4}:
\begin{equation}\label{condi_gamma_iv}
	\alpha<\frac 1q=1-\frac 1p\,,\ \theta>\frac 1p\,,\ \alpha+\gamma<\frac 1q\,.
\end{equation}
Therefore, if $\alpha,\ \theta$ satisfy $\eqref{list1}$ and $\eta_1$, $\eta_2$, $\eta_3$ satisfy \eqref{Cond_AppeC3}, then we have our desire upper bounds for $\sup\limits_{t,x,y} \cJ_{1,k}^{(1)}(t,x,y)$ $(k=1,2,3)$.
\end{proof}

\begin{lem}\label{J_{2,k}^{(1)}}

Suppose $\alpha,\ \theta$ satisfy $\eqref{list1}$, and moreover
\begin{equation}
	\eta_4\text{ satisfies  } \eqref{list2} \text{ and }  \eqref{list3}\,.\label{Cond_AppeC4}
\end{equation}
Then the terms $\cJ_{2,k}^{(1)}(t,x,y)$ in \eqref{J_{2,K}^1} can be bounded as follows $$\sup\limits_{t,x,y}\cJ_{2,k}^{(1)}(t,x,y)\ls C_{T,p,H,\gamma}|\hbar|^{\gamma q}
\quad \hbox{for  $\ k=1,2,3$.}$$
\end{lem}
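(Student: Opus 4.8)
\textbf{Proof plan for Lemma \ref{J_{2,k}^{(1)}}.}
The plan is to proceed exactly as in the proof of Lemma \ref{J_{1,k}^{(1)}}, but now exploiting the fact that the outer integration variable $r$ ranges only over the tiny interval $(0,\hbar)$, so that every $r$-integral is taken over a set of length at most $\hbar$. Recall from \eqref{Est.iv1_2} that on the regime $r<\hbar$ (hence $-r+\hbar\ge r-\hbar$) one has the pointwise bound
\begin{equation*}
\lt|\fD_{\hbar}\cS_{1-\alpha}(r,z)\rt|^q\lesssim |r+|z||^{-(\alpha+\eta_1)q}\hbar^{\eta_1 q}+|r-|z||^{-(\alpha+\eta_4)q}\hbar^{\eta_4 q}\cdot(\1_{C_1}+\1_{C_2})+\lt|(r-|z+\hbar|)^{-\alpha}+(r-|z|)^{-\alpha}\rt|^q\cdot\1_{C_3}\,,
\end{equation*}
with $C_1,C_2,C_3$ as in \eqref{Def.C_set}, which produces the three summands $\cJ_{2,1}^{(1)},\cJ_{2,2}^{(1)},\cJ_{2,3}^{(1)}$ in \eqref{J_{2,K}^1}. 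So I would estimate each of these three terms separately, in that order.

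First I would treat $\cJ_{2,1}^{(1)}(t,x,y)=\int_0^\hbar\int_\RR r^{q(\theta-1)}|r+|z||^{-(\alpha+\eta_1)q}\hbar^{\eta_1 q}\,dzdr$. Using $(\alpha+\eta_1)q>1$ (this is where \ref{list3} enters for $\eta_1$) the inner $z$-integral converges after the scaling $z\to rz$ and contributes $r^{1-(\alpha+\eta_1)q}$; the remaining $r$-integral over $(0,\hbar)$ then gives $\hbar^{q(\theta-1)+2-(\alpha+\eta_1)q}$, provided $q(\theta-1)+1-(\alpha+\eta_1)q>-1$, i.e. $\theta>1+\alpha-\tfrac 2q+\eta_1$ (this is where the $\theta$-condition in \ref{list2} enters). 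Combining with the explicit factor $\hbar^{\eta_1 q}$ this totals $\hbar^{q(\theta-1)+2-\alpha q}$; since $\alpha+\gamma<\tfrac1q$ from \ref{list1} and $\theta>\tfrac1p$, this exponent dominates $\gamma q$, so $\cJ_{2,1}^{(1)}\lesssim \hbar^{\gamma q}$. For $\cJ_{2,2}^{(1)}$ I would use the bound on $C_1\cup C_2=\{|z|>r+\hbar\}$, where $|r-|z||^{-(\alpha+\eta_4)q}\hbar^{\eta_4 q}$ applies; since $|z|-r>\hbar$ on this set, the $z$-integral $\int_\hbar^\infty s^{-(\alpha+\eta_4)q}\,ds\simeq \hbar^{1-(\alpha+\eta_4)q}$ converges exactly because $(\alpha+\eta_4)q>1$ (condition \ref{list3} for $\eta_4$), and the $r$-integral over $(0,\hbar)$ contributes $\hbar^{q(\theta-1)+1}$ since $q(\theta-1)>-1$ (from $\theta>\tfrac1p$); the total exponent is $q(\theta-1)+2+(\eta_4-\alpha-\eta_4)q = q(\theta-1)+2-\alpha q\ge\gamma q$ again. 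For the last term $\cJ_{2,3}^{(1)}$, the integrand on $C_3=\{-r-\hbar<z<r+\hbar\}$ is $\lt|(r-|z+\hbar|)^{-\alpha}+(r-|z|)^{-\alpha}\rt|^q$, and since $|z|\le r+\hbar\le 2\hbar$ on $C_3$, the $z$-integral is $\int_0^{2\hbar} s^{-\alpha q}\,ds\simeq \hbar^{1-\alpha q}$ (using $\alpha q<1$ from \ref{list1}); the $r$-integral over $(0,\hbar)$ of $r^{q(\theta-1)}$ contributes $\hbar^{q(\theta-1)+1}$, for a total $\hbar^{q(\theta-1)+2-\alpha q}\gtrsim \hbar^{\gamma q}$ under the same restrictions. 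Summing the three bounds yields $\sup_{t,x,y}\cJ_{2,k}^{(1)}(t,x,y)\lesssim |\hbar|^{\gamma q}$ for $k=1,2,3$ under \ref{list1}, \eqref{Cond_AppeC4}, which completes the proof.

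The routine obstacle here is purely bookkeeping: one must check at each step that the exponents produced are large enough, i.e. that each of $q(\theta-1)+2-\alpha q$, $q(\theta-1)+2-\alpha q$ (again from $\cJ_{2,2}^{(1)}$), and $q(\theta-1)+2-\alpha q$ exceeds $\gamma q$, which follows from $\alpha+\gamma<\tfrac1q$ together with $\theta>\tfrac1p=1-\tfrac1q$ in \ref{list1}. There is no genuine difficulty because the $r$-domain has length $\le\hbar$; the small length of this interval is exactly what forces the extra power of $\hbar$ and makes all three terms behave better than in Lemma \ref{J_{1,k}^{(1)}}. I would also remark, as in the earlier lemmas, that the parameter window is nonempty: given any $p>\tfrac1H$ one takes $\gamma$ close to $H-\tfrac1p$, $\alpha$ close to $1-H$, $\theta$ close to $H$, and $\eta_1,\eta_4$ close to $H-\tfrac1p$, and all the displayed inequalities hold with room to spare.
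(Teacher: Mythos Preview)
Your proof is correct and follows essentially the same route as the paper: each of $\cJ_{2,1}^{(1)},\cJ_{2,2}^{(1)},\cJ_{2,3}^{(1)}$ is bounded by exploiting the short $r$-interval $(0,\hbar)$, and you land on the same final exponent $q(\theta-1)+2-\alpha q$ in every case, exactly as the paper does. The only cosmetic difference is that for $\cJ_{2,3}^{(1)}$ the paper splits the inner $z$-integral into a piece over $[0,r]$ and a piece over $[0,\hbar]$ before integrating in $r$, whereas you bound the $z$-integral uniformly by $\hbar^{1-\alpha q}$; your justification ``since $|z|\le 2\hbar$'' is a little imprecise (the relevant fact is that $|r-|z||$ and $|r-|z+\hbar||$ range over $[0,\hbar]$ on $C_3$, not that $|z|$ is small), but the bound itself is correct.
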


\begin{proof}
Similar to the way when we deal with   $\cI_2^{(1)}$ in the proof of part \textbf{(i)} of Proposition \ref{p.4.1}, $\cJ_{2,1}^{(1)}(t,x,y)$ can be bounded by $\hbar^{\gamma q}$ under the condition  \eqref{condi_eta_1} which holds under condition \eqref{Cond_AppeC4}.  Let us recall the definitions of $C_1$, $C_2$ and $C_3$ in \eqref{Def.C_set}, then for $\cJ_{2,2}^{(1)}(t,x,y)$ we have
\begin{align}\label{iv1_2est}
\cJ_{2,2}^{(1)}(t,x,y)=&\int_0^{\hbar}\int_{\RR}r^{q(\theta-1)}\lt|\fD_{\hbar}(r-|z|)^{-\alpha}\rt|^q\cdot\lt(\1_{C_1}+\1_{C_2}\rt)dzdr\nonumber\\
\leq&\int_0^{\hbar}\int_{z<-r-\hbar}r^{q(\theta-1)}|r-|z||^{(-\alpha-\eta_4)q}\hbar^{\eta_4q}dzdr\nonumber\\
& +\int_0^{\hbar}\int_{z>r+\hbar}r^{q(\theta-1)}|r-|z||^{(-\alpha-\eta_4)q}\hbar^{\eta_4q}dzdr\,.
\end{align}
For the first term  of the summation in \eqref{iv1_2est}, we have
\begin{align*}
\int_0^{\hbar}\int_{z<-r-\hbar}&r^{q(\theta-1)}|r-|z||^{(-\alpha-\eta_4)q}\hbar^{\eta_4q}dzdr\\
&=\hbar^{\eta_4q}\int_0^{\hbar}\int_{z<-r-\hbar}r^{q(\theta-1)}(-z-r)^{(-\alpha-\eta_4)q}dzdr\\
&=\hbar^{\eta_4q}\int_0^{\hbar}\int_{z>\hbar}r^{q(\theta-1)}z^{(-\alpha-\eta_4)q}dzdr\\
&\simeq \hbar^{\eta_4q}\hbar^{1-(\alpha+\eta_4)q}\int_0^{\hbar}r^{q(\theta-1)}dr\\
&\simeq \hbar^{\eta_4q+1-(\alpha+\eta_4)q+1+q(\theta-1)}\lesssim\hbar^{\gamma q},
\end{align*}
under  the same conditions  as   \eqref{condi_eta_1} with $\eta_1$ replaced by $\eta_4$.

Similarly, we have for the second term of the sum  in \eqref{iv1_2est}
\begin{align*}
\int_0^{\hbar}&\int_{z>r+\hbar}r^{q(\theta-1)}|r-|z||^{(-\alpha-\eta_4)q}\hbar^{\eta_4q}dzdr \\
&=\int_0^{\hbar}\int_{z>r+\hbar}r^{q(\theta-1)}(z-r)^{(-\alpha-\eta_4)q}\hbar^{\eta_4q}dzdr \\
&=\hbar^{\eta_4q}\int_0^{\hbar}\int_{z>\hbar}r^{q(\theta-1)}z^{-(\alpha+\eta_4)q}dzdr\\
&\simeq \hbar^{\eta_4q}\hbar^{1-(\alpha+\eta_4)q}\int_0^{\hbar}r^{q(\theta-1)}dr
\lesssim \hbar^{\eta_4q+1-(\alpha+\eta_4)q+1+q(\theta-1)},
\end{align*}
which can be bounded by $\hbar^{\gamma q}$ if the condition \eqref{condi_eta_1}  with $\eta_1$ replaced by $\eta_4$   holds.

 For the last term $\cJ_{2,3}^{(1)}(t,x,y)$, if $\alpha,\ \theta$ satisfy $\eqref{list1}$, then the conditions
 \begin{equation}\label{condi_J_{2,3}}
 	\alpha<\frac{1}{q}=1-\frac 1p\,,\ \theta>\frac 1p\,,\ \theta>1+\alpha-\frac 2q+\gamma\,,
 \end{equation}
are satisfied. So we have
\begin{align*}
\cJ_{2,3}^{(1)}(t,x,y)=&\int_0^{\hbar}\int_{\RR}r^{q(\theta-1)}\lt|(r-|z+\hbar|)^{-\alpha}+(r-|z|)^{-\alpha}\rt|^q\cdot \1_{C_3}  dzdr\\
=&\int_0^{\hbar}\int_{-r-\hbar}^{r+\hbar} r^{q(\theta-1)}\lt(\lt|r-|z+\hbar|\rt|^{-\alpha q}+\lt|r-|z|\rt|^{-\alpha q}\rt) dzdr\\
\simeq&\ \int_0^{\hbar} \int_{0}^{r}r^{q(\theta-1)}|z|^{-\alpha q} dzdr+ \int_0^{\hbar} \int_{0}^{\hbar}r^{q(\theta-1)}|z|^{-\alpha q} dzdr \\
\simeq &\ \int_0^{\hbar} r^{q(\theta-1)+1-\alpha q}dr+ \hbar^{1-\alpha q} \int_{0}^{\hbar}r^{q(\theta-1)}dr \\
\lesssim &\ \hbar^{2-\alpha q+q(\theta-1)}\lesssim \hbar^{\gamma q}\,.
\end{align*}
Thus, the proof is complete.
\end{proof}

\begin{lem}\label{J_{2,j}^2}
Suppose $\alpha,\ \theta$ satisfy $\eqref{list1}$ and
\begin{equation}
	\begin{cases}
		\eta_2 \text{ satisfies } \eqref{list3};\\
		\eta_3 \text{ satisfies } \eqref{list4};\\
		\eta_4 \text{ satisfies } \eqref{list2} \text{ and }  \eqref{list3}\,.\label{Cond_AppeC5}
	\end{cases}
\end{equation}
Then  the $\cJ_{2,1, j}^{(2)}(t,x,y),\ j=1,\cdots,6$ in \eqref{Ineq_J_21^2} can be bounded as follows.
 $$\sup\limits_{t,x,y}\cJ_{2,1,j}^{(2)}(t,x,y)\ls|\hbar|^{\gamma q}.$$

\end{lem}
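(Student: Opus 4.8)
The plan is to estimate each of the six pieces $\cJ_{2,1,j}^{(2)}(t,x,y)$ separately, using the three case-dependent pointwise bounds on $|\fD_\hbar|r-|z||^{-\alpha}|$ recorded just before the statement of the lemma (the one with $|r-|z||^{-\alpha-\eta}\hbar^\eta$ on $D_1\cup D_5\cup D_6$, the one with $|r-|z+\hbar||^{-\alpha-\eta}\hbar^\eta$ on $D_3$, and the ``no cancellation'' bound $|r-|z+\hbar||^{-\alpha}+|r-|z||^{-\alpha}$ on $D_2\cup D_4$). For the three ``regular'' regions $D_1$, $D_5\cup D_6$ and $D_3$, after inserting the corresponding power bound I would perform the translation $z\mapsto z+r$ (or $z\mapsto z+r+\hbar$) to center the singularity, integrate in $z$ over the appropriate slab or half-line, and then integrate in $r$ over $[0,t]$; each such computation produces a power $\hbar^{\eta_j q + (\text{something})}$ or $\hbar^{1-\alpha q}$, which is bounded by $\hbar^{\gamma q}$ precisely under conditions \ref{list1}, \ref{list3}, \ref{list4} on $\alpha,\theta,\eta_j$. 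This is exactly the same bookkeeping already carried out for $\cJ_{1,2,j}^{(1)}$ and $\cJ_{2,2}^{(1)}$ in Lemmas \ref{J_{1,k}^{(1)}} and \ref{J_{2,k}^{(1)}}, so I would cite those computations and only point out which $\eta$ plays which role.

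For the ``bad'' slabs $D_2=[-r-\hbar<z<-r]$ and $D_4=[r-\hbar<z<r]$, where no cancellation is available, I would bound
\[
\int_{D_2\cup D_4} r^{q(\theta-1)}\bigl(|r-|z+\hbar||^{-\alpha q}+|r-|z||^{-\alpha q}\bigr)\,dz
\]
by changing variables so the singularities sit at $0$, integrating $|z|^{-\alpha q}$ over an interval of length comparable to $\hbar$ (which requires $\alpha q<1$, i.e. $\alpha<1/q$, part of \ref{list1}) to get a factor $\hbar^{1-\alpha q}$, and then integrating $r^{q(\theta-1)}$ over $[0,t]$ (finite since $\theta>1/p=1-1/q$ by \ref{list1}), yielding a total of order $\hbar^{1-\alpha q}$. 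Since $\alpha+\gamma<1/q$ under \ref{list1}, we have $1-\alpha q>\gamma q$, so this is $\lesssim\hbar^{\gamma q}$. The term on $D_6=[r>z+\hbar]$ uses the bound $|r-|z||^{-\alpha-\eta_3}\hbar^{\eta_3}$ with $\eta_3$ in the range \ref{list4} (so that $\alpha+\eta_3<1/q$ and the $z$-integral converges), again producing $\hbar^{\eta_3 q+1-(\alpha+\eta_3)q+1+q(\theta-1)}\lesssim\hbar^{\gamma q}$.

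The only genuine subtlety — and what I expect to be the main obstacle — is verifying that there is a nonempty choice of $(\alpha,\theta,\eta_2,\eta_3,\eta_4)$ simultaneously satisfying \ref{list1} together with the mixed constraints \eqref{Cond_AppeC5}, \emph{and} that this choice is compatible with the global hypotheses $p>1/H$, $\gamma<H-1/p$; this is the same compatibility issue addressed in the remark after Lemma \ref{I_{2,k}^{1}}, and I would dispose of it the same way, by exhibiting $\gamma=H-1/p-\ep_1$, $\alpha=1-H+\ep_2$, $\theta=H-\ep_3$, $\eta_j=H-1/p-\ep_j'$ for suitably ordered small $\ep$'s, so that $\gamma$ can be taken arbitrarily close to $H-1/p$. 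Assembling the six bounds then gives $\sup_{t,x,y}\cJ_{2,1,j}^{(2)}(t,x,y)\lesssim|\hbar|^{\gamma q}$ for $j=1,\dots,6$, which is the assertion.
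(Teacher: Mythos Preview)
Your plan is correct and follows the paper's proof essentially verbatim: split into the six regions $D_1,\dots,D_6$, use the pointwise power bounds with the $\eta_j$'s on the ``regular'' regions, use the trivial triangle bound on the slabs $D_2,D_4$ to extract $\hbar^{1-\alpha q}$, and check compatibility of the parameters exactly as in the remark after Lemma~\ref{I_{2,k}^{1}}.

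One small correction on the $D_6$ piece: the region is $D_6=[z>r+\hbar]$ (the displayed $[r>z+\hbar]$ in the paper is a typo), so after the shift $\widehat z=z-r$ you are integrating $\int_\hbar^\infty \widehat z^{\,-(\alpha+\eta_3)q}\,d\widehat z$, which converges precisely when $\alpha+\eta_3>\frac1q$ (condition~\ref{list3}), not $\alpha+\eta_3<\frac1q$. The resulting power is then $\hbar^{\eta_3 q}\cdot \hbar^{1-(\alpha+\eta_3)q}=\hbar^{1-\alpha q}\lesssim\hbar^{\gamma q}$ since $\alpha+\gamma<\frac1q$ from~\ref{list1}; this is exactly how the paper's proof handles it (the condition \ref{list4} on $\eta_3$ in the lemma's hypothesis is a slip---the proof itself uses~\ref{list3}). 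Your exponent $\hbar^{\eta_3 q+1-(\alpha+\eta_3)q+1+q(\theta-1)}$ has an extra factor coming from treating the $r$-integral as over $[0,\hbar]$ rather than $[0,t]$; just replace it by the constant $\int_0^T r^{q(\theta-1)}dr$ and the bookkeeping matches.
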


\begin{proof}
Let us recall the definitions of $D_1,\cdots,D_6$ in \eqref{Def.D_set}. Firstly, we deal with $\cJ_{2,1,1}^{(2)}$ and $\cJ_{2,1,5}^{(2)}$ on $D_1$ and $D_5$  successively. We have
\begin{align}\label{Ineq_F_1,5}
\cJ_{2,1,1}^{(2)}&(t,x,y)+\cJ_{2,1,5}^{(2)}(t,x,y)\nonumber\\
=&|\hbar|^{\eta_2 q}\int_{0}^{t}\int_{z<-r-\hbar}r^{q(\theta-1)}(-z-r)^{-(\alpha+\eta_2)q}dzdr\nonumber\\
&+|\hbar|^{\eta_2 q}\int_{0}^{t}\int_{r}^{r+\hbar}r^{q(\theta-1)}(z-r)^{-(\alpha+\eta_2)q}dzdr\nonumber\\
\ls&|\hbar|^{\eta_2 q}\int_{0}^{t}r^{q(\theta-1)}dr\cdot\int_{\widetilde{z}>\hbar}(\widetilde{z})^{-(\alpha+\eta_2)q}dz\nonumber\\
&+|\hbar|^{\eta_2 q}\int_{0}^{t}r^{q(\theta-1)}dr\cdot\int_0^{\hbar}(\widehat{z})^{-(\alpha+\eta_2)q}dz\,,
\end{align}
through changing of variables $\widetilde{z}=-z-r$ and $\widehat{z}=z-r$. Thus, it can be bounded by $|\hbar|^{\gamma q}$ if
\begin{equation}\label{condi_J_21^2}
\alpha+\eta_2>\frac 1q,\ \eta_2>\gamma.
\end{equation}
In the same way, we can deal with $\cJ_{2,1,6}^{(2)}(t,x,y)$ by changing of variable $\widehat{z}=z-r,$
\begin{align*}
|\hbar|^{\eta_3 q}\int_{0}^{t}&\int_{z>r+\hbar}r^{q(\theta-1)}(z-r)^{-(\alpha+\eta_3)q}dzdr\\
\ls&|\hbar|^{\eta_3 q}\int_{0}^{t}r^{q(\theta-1)}dr\cdot\int_{\widehat{z}>\hbar}(\widehat{z})^{-(\alpha+\eta_3)q}dz\ls|\hbar|^{\gamma q},
\end{align*}
which requires $\eta_3$ satisfying the  conditions \eqref{condi_J_21^2} and
\begin{equation}\label{condi_J_22^2}
\alpha+\eta_3<\frac 1q,\ \eta_3>\gamma.
\end{equation}
Similarly, by changing of variable $z\to z+\hbar$ and then $z\to rz$, we have on $D_3,$
\begin{align}\label{Ineq_F_3}
\cJ_{2,1,3}^{(2)}(t,x,y)\ls&\hbar^{\eta_4 q}\int_{0}^{t}\int_{-r}^{r-\hbar}r^{q(\theta-1)}|r-|z+\hbar||^{-(\alpha+\eta_4)q}dzdr\nonumber\\
\ls&\hbar^{\eta_4 q}\int_{0}^{t}\int_{-r}^{r}r^{q(\theta-1)}|r-|z||^{-(\alpha+\eta_4)q}dzdr\nonumber\\
=&\hbar^{\eta_4 q}\int_{0}^{t}r^{q(\theta-1)-(\alpha+\eta_4)q+1}dr\cdot\int_{0}^{1}|1-|z||^{-(\alpha+\eta_4)q}dz\ls\hbar^{\gamma q}\,,
\end{align}
which requires  the same condition as \eqref{condi_eta_1} with $\eta_1$ replaced by $\eta_4$ here.

As for $\cJ_{2,1,2}^{(2)}(t,x,y)$ and $\cJ_{2,1,4}^{(2)}(t,x,y)$, we have
\begin{align}\label{J_22^2}
\cJ_{2,1,2}^{(2)}(t,x,y)&+\cJ_{2,1,4}^{(2)}(t,x,y)\nonumber\\
=&\int_{0}^{t}\lt(\int_{-r-\hbar}^{-r}+\int_{r-\hbar}^{r}\rt)r^{q(\theta-1)}\lt|\fD_{\hbar}|r-|z||^{-\alpha}\rt|^qdzdr\nonumber\\
\ls&\int_{0}^{t}\lt(\int_{-r-\hbar}^{-r}+\int_{r-\hbar}^{r}\rt)r^{q(\theta-1)}|r-|z+\hbar||^{-\alpha q}dzdr\nonumber\\
&+\int_{0}^{t}\lt(\int_{-r-\hbar}^{-r}+\int_{r-\hbar}^{r}\rt)r^{q(\theta-1)}|r-|z||^{-\alpha q}dzdr\nonumber\\
\ls&\int_{0}^{t}r^{q(\theta-1)}dr\cdot\int_{0}^{\hbar}|z|^{-\alpha q}dz\ls |\hbar|^{1-\alpha q}\ls |\hbar|^{\gamma q},
\end{align}
if we require
\begin{equation}\label{condi_J_23^2}
\theta>\frac 1p,\ \alpha<\frac 1q,\ \alpha+\gamma<\frac 1q.
\end{equation}
Thus, if $\alpha,\ \theta$ satisfy $\eqref{list1}$ and if \eqref{Cond_AppeC5} holds, then all the restrictions on $\eta$'s are satisfied. The proof is then complete.
\end{proof}

\begin{lem}\label{lem_J_3+J_4}
Suppose $\alpha,\ \theta$ satisfy $\eqref{list1}$ and moreover
\begin{equation}
	\eta_4,\eta_5 \text{ satisfy } \eqref{list2} \text{ and }  \eqref{list3}\,.\label{Cond_AppeC6}
\end{equation}
Then the terms $\sup\limits_{t,x,y}\cJ_3^{(2)}(t,x,y)$ and $\sup\limits_{t,x,y}\cJ_4^{(2)}(t,x,y)$ in \eqref{Ineq_J_3+J_4} can be bounded by a constant  multiple of $|\hbar|^{\gamma q}$.
\end{lem}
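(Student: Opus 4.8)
The plan is to bound the two integrals on the right-hand side of \eqref{Ineq_J_3+J_4} directly, by the same rescaling $z\to rz$ that is used throughout Appendix \ref{Appen.C}. Since we only need the bound for $|\hbar|=|x-y|<1$, it suffices in each case to produce an estimate of the form $C_{T}\,|\hbar|^{\kappa q}$ with $\kappa\in\{\eta_4,\eta_5,2\eta_4\}$ and then use $\eta_4,\eta_5>\gamma$ (part of \eqref{list2}--\eqref{list3}) together with $|\hbar|<1$ to replace the exponent by $\gamma q$.

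First I would treat the second integral (the $\cJ_4^{(2)}$ contribution), which is handled in complete analogy with the term $\cI_{2,6}^{(2)}(t,h)$ in Lemma \ref{Lem.I_{2,5+6}^2}. Using $r^{\eta_5 q}(r^2+z^2)^{-\eta_5 q}(r^2+z^2)^{-\alpha q/2}=r^{\eta_5 q}(r^2+z^2)^{-(\eta_5+\alpha/2)q}$ and then substituting $z\to rz$ turns the integral into $|\hbar|^{\eta_5 q}\bigl(\int_0^t r^{q(\theta-1)-\eta_5 q-\alpha q+1}\,dr\bigr)\bigl(\int_\RR(1+z^2)^{-(\eta_5+\alpha/2)q}\,dz\bigr)$; the spatial integral converges because $(2\eta_5+\alpha)q\ge(\alpha+\eta_5)q>1$ by \eqref{list3}, and the temporal integral is finite over $[0,T]$ because $q(\theta-1)-\eta_5 q-\alpha q+1>-1$, which follows from $\theta>1+\alpha-2/q+2\eta_5$ in \eqref{list2}. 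Hence this term is $\ls|\hbar|^{\eta_5 q}\ls|\hbar|^{\gamma q}$. For the first integral (the $\cJ_3^{(2)}$ contribution) the only new feature is the factor $|z+\hbar|^{\eta_4 q}$, which I would dominate by $|z|^{\eta_4 q}+|\hbar|^{\eta_4 q}$ \emph{before} rescaling, since the shift does not interact cleanly with $z\to rz$. The $|\hbar|^{\eta_4 q}$-piece becomes, after the substitution, $|\hbar|^{2\eta_4 q}\bigl(\int_0^t r^{q(\theta-1)+1-(2\eta_4+\alpha)q}\,dr\bigr)\bigl(\int_\RR(1+z^2)^{-(\alpha/2+\eta_4)q}\,dz\bigr)\ls|\hbar|^{2\eta_4 q}$, both integrals converging by \eqref{list2} and $(2\eta_4+\alpha)q\ge(\eta_4+\alpha)q>1$ from \eqref{list3}; the $|z|^{\eta_4 q}$-piece, after $z\to rz$, has spatial part $\int_\RR(1+z^2)^{-(\alpha/2+\eta_4)q}|z|^{\eta_4 q}\,dz$, which converges at infinity because $(\alpha+\eta_4)q>1$, leaving $|\hbar|^{\eta_4 q}\int_0^t r^{q(\theta-1)+1-(\alpha+\eta_4)q}\,dr\ls|\hbar|^{\eta_4 q}$, where the finiteness of the $r$-integral follows from $\theta>1+\alpha-2/q+\eta_4$, again a consequence of \eqref{list2}. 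Combining the three pieces and using $|\hbar|<1$, $\eta_4,\eta_5>\gamma$ gives the claimed bound.

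I do not expect any genuine obstacle: the lemma is a routine scaling estimate and every parameter inequality needed is already packaged in \eqref{list1}--\eqref{list4}. The only points requiring a little care are splitting $|z+\hbar|^{\eta_4 q}$ before rather than after the rescaling $z\to rz$, and observing that the seemingly weaker exponent $|\hbar|^{2\eta_4 q}$ is in fact harmless because $|\hbar|<1$ so that $|\hbar|^{2\eta_4 q}\le|\hbar|^{\eta_4 q}\le|\hbar|^{\gamma q}$.
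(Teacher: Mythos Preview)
Your proof is correct and follows essentially the same route as the paper: split $|z+\hbar|^{\eta_4 q}\lesssim |z|^{\eta_4 q}+|\hbar|^{\eta_4 q}$ before rescaling, then apply $z\to rz$ to factor each piece into a convergent temporal integral times a convergent spatial integral, and handle $\cJ_4^{(2)}$ by a single $z\to rz$ substitution exactly as in the treatment of $\cI_{2,6}^{(2)}$. The only cosmetic differences are that the paper treats $\cJ_3^{(2)}$ before $\cJ_4^{(2)}$ and records the needed condition for $\cJ_4^{(2)}$ in the weaker form $\theta>1+\alpha-2/q+\eta_5$ (as in \eqref{condi_eta_1}) rather than invoking the full \eqref{list2}.
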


\begin{proof}
For the term $\cJ_3^{(2)}(t,x,y)$, by \eqref{regular_J_3^2} and the inequality $|z+\hbar|^{\eta_4 q}\ls |z|^{\eta_4 q}+|\hbar|^{\eta_4 q}$ , we have
\begin{align}\label{Ineq_J_3^2}
\cJ_3^{(2)}(t,x,y)\ls&|\hbar|^{\eta_4 q}\int_{0}^{t}\int_{\RR}r^{q(\theta-1)}(r^2+z^2)^{-(\frac{\alpha}{2}+\eta_4 )q}|z|^{\eta_4 q}dzdr\nonumber\\
&+|\hbar|^{2\eta_4 q}\int_{0}^{t}\int_{\RR}r^{q(\theta-1)}(r^2+z^2)^{-(\frac{\alpha}{2}+\eta_4 )q}dzdr\nonumber\\
=&|\hbar|^{\eta_4 q}\int_{0}^{t}r^{q(\theta-1)-(\alpha+2\eta_4 )q+\eta_4 q+1}dr\cdot\int_{\RR}(1+z^2)^{-(\frac{\alpha}{2}+\eta_4 )q}|z|^{\eta_4 q}dz\nonumber\\
&+|\hbar|^{2\eta_4 q}\int_{0}^{t}r^{q(\theta-1)-(\alpha+2\eta_4 )q+1}dr\cdot\int_{\RR}(1+z^2)^{-(\frac{\alpha}{2}+\eta_4 )q}dz\,,
\end{align}
which can be bounded by $|\hbar|^{\gamma q}$ under the following conditions
\begin{equation}\label{Condi_J_3^2}
\eta_4>\gamma\,, \ \theta-2\eta_4>1+\alpha-\frac 2q\,,\  \alpha+\eta_4>\frac 1q.
\end{equation}

As for the term $\cJ_4^{(2)}(t,x,y)$, by inequality \eqref{regular_J_4^2} and by changing of variable $z\to rz,$
\begin{align}\label{Ineq_J_4^2}
\cJ_4^{(2)}(t,x,y)\ls&|\hbar|^{\eta_5 q}\int_{0}^{t}\int_{\RR}r^{q(\theta-1)}\frac{r^{\eta_5 q} }{(r^2+z^2)^{\eta_5 q}}(r^2+z^2)^{-\frac{\alpha}{2} q}dzdr\nonumber\\
\ls&|\hbar|^{\eta_5 q}\int_{0}^{t}r^{q(\theta-1)-\eta_5 q-\alpha q+1}dr\cdot\int_{\RR}(1+z^2)^{-\frac{\alpha}{2} q-\eta_5 q}dz\,,
\end{align}
which can be bounded by $|\hbar|^{\gamma q}$ under conditions \eqref{condi_eta_1} with $\eta_1$ substituted by $\eta_5$. So we  complete the proof by noticing that \eqref{Condi_J_3^2} and \eqref{condi_eta_1} are  implied by \eqref{Cond_AppeC6}.
\end{proof}

\bibliography{SWE_HLW_OCT}
\bibliographystyle{plain}
%
%
%
%
%
%

\end{document}